\let\oldtocsection=\tocsection
\let\oldtocsubsection=\tocsubsection
\let\oldtocsubsubsection=\tocsubsubsection
\renewcommand{\tocsection}[2]{\hspace{0em}\oldtocsection{#1}{#2}\bfseries}
\renewcommand{\tocsubsection}[2]{\hspace{1.8em}\oldtocsubsection{#1}{#2}}
\renewcommand{\tocsubsubsection}[2]{\hspace{4.4em}\oldtocsubsubsection{#1}{#2}}
\renewcommand\subsection{\@startsection{subsection}{2}%
  \z@{-.5\linespacing\@plus-.7\linespacing}{.5\linespacing}%
  {\normalfont\scshape}}
\renewcommand\subsubsection{\@startsection{subsubsection}{3}%
  \z@{.5\linespacing\@plus.7\linespacing}{.5\linespacing}%
  {\normalfont\scshape}}
\newtheorem{theorem}{Theorem}[section]
\newtheorem{lemma}[theorem]{Lemma}
\newtheorem{remark}[theorem]{Remark}
\newtheorem{Fact}[theorem]{Fact}
\newtheorem{observation}[theorem]{Observation}
\theoremstyle{definition}
\newtheorem{definition}[theorem]{Definition}
\newcommand{\dom}[1]{\ensuremath{\mathrm{dom}}(#1)}
\newcommand{\power}{\ensuremath{\mathscr{P}}}
\newcommand{\set}[2]{\ensuremath{\{#1 \,|\, #2 \}}}
\newcommand{\seq}[2]{\ensuremath{\langle #1 \,|\, #2 \rangle}}
\newcommand{\restr}[2]{\ensuremath{#1 \! \upharpoonright \! #2}}
\newcommand{\Iff}{\Leftrightarrow}
\newcommand{\el}{\prec}
\newcommand{\sub}{\subseteq}
\newcommand{\super}{\supseteq}
\newcommand{\bb}{\mathbb}
\newcommand{\beq}{\begin{equation}}
\newcommand{\eeq}{\end{equation}}
\newcommand{\brm}{\begin{remark}\begin{rm}}
\newcommand{\erm}{\end{rm}\end{remark}}
\newcommand{\mx}{\mathrm}
\newcommand{\bce}{\begin{compactenum}}
\newcommand{\ece}{\end{compactenum}}
\newcommand{\cof}{\mathrm{cof}}
\newcommand{\cf}{\mathrm{cf}}
\newcommand{\Add}{\mathrm{Add}}
\newcommand{\R}{\bb{R}}
\newcommand{\Q}{\bb{Q}}
\newcommand{\Z}{\bb{Z}}
\renewcommand{\P}{\bb{P}}
\newcommand{\T}{\bb{T}}
\newcommand{\M}{\bb{M}}
\newcommand{\x}{\times}
\newcommand{\Coll}{\mathrm{Coll}}
\newcommand{\TP}{{\sf TP}}
\newcommand{\ZFC}{{\sf ZFC}}
\newcommand{\ZF}{{\sf ZF}}
\newcommand{\CH}{{\sf CH}}
\newcommand{\GCH}{{\sf GCH}}
\newcommand{\SCH}{{\sf SCH}}
\newcommand{\AC}{{\sf AC}}
\newcommand{\SR}{{\sf SR}}
\newcommand{\PD}{{\sf PD}}
\newcommand{\AP}{{\sf AP}}
\newcommand{\CT}{{\sf CT}}
\newcommand{\BPI}{{\sf BPI}}
\newcommand{\DC}{{\sf DC}}
\newcommand{\AD}{{\sf AD}}
\newcommand{\MM}{{\sf MM}}
\newcommand{\PFA}{{\sf PFA}}
\newcommand{\RC}{{\sf RC}}
\newcommand{\GMPii}{{\sf GMP}_{\omega_2}}
\newcommand{\GMP}{{\sf GMP}}
\newcommand{\GMPgen}[1]{{\sf GMP}_{#1}}
\newcommand{\ISP}{{\sf ISP}}
\newcommand{\ITP}{{\sf ITP}}
\newcommand{\SP}{{\sf SP}}
\newcommand{\ISPii}{{\sf ISP}_{\omega_2}}
\newcommand{\TPii}{{\sf TP}_{\omega_2}}
\newcommand{\SRii}{{\sf SR}(\omega_2)}
\newcommand{\ISPgen}[1]{{\sf ISP}_{#1}}
\newcommand{\ITPgen}[1]{{\sf ITP}_{#1}}
\newcommand{\TPgen}[1]{{\sf TP}_{#1}}
\newcommand{\SPgen}[1]{{\sf SP}_{#1}}
\newcommand{\wAGPY}{{\sf wAGP}_{\mathcal Y}}
\newcommand{\BC}{{\sf BC}}
\newcommand{\DSS}{{\sf DSS}}
\newcommand{\IGMP}{{\sf IGMP}}
\newcommand{\GM}{{\sf GM}}
\newcommand{\SGM}{{\sf SGM}}
\newcommand{\SATP}{{\sf SATP}}
\newcommand{\BA}{{\sf BA}(\omega_1)}
\newcommand{\BAii}{{\sf BA}(\omega_2)}
\newcommand{\BAg}{{\sf BA}}
\newcommand{\KH}{{\sf KH}(\omega_1)}
\newcommand{\KHgen}{{\sf KH}}
\newcommand{\nonKHgen}{{\sf KH}}
\newcommand{\SSR}{{\sf SSR}}
\newcommand{\CSR}{{\sf CSR}}
\newcommand{\wKH}{{\sf wKH}}
\newcommand{\nonwKH}{\neg {\sf wKH}}
\newcommand{\SH}{{\sf SH}(\omega_1)}
\newcommand{\SHii}{{\sf SH}(\omega_2)}
\newcommand{\SHg}{{\sf SH}}
\newcommand{\WP}{{\sf WC}(\omega_1)}
\newcommand{\nonWP}{\neg {\sf WC}(\omega_1)}
\newcommand{\WPg}{{\sf WC}}
\newcommand{\KP}{{\sf KC}}
\newcommand{\nonKP}{\neg {\sf KC}}
\newcommand{\MA}{{\sf MA}(\omega_1)}
\newcommand{\PT}{\mathrm{PT}}
\newcommand{\NPT}{\mathrm{NPT}}
\newcommand{\Chr}{\mathrm{Chr}}
\newcommand{\Color}{\mathrm{Col}}
\newcommand{\dense}{\omega_1\mbox{-dense}}
\newcommand{\ZFCc}{\ZFC + \ISPii + \FRP}
\newcommand{\ZFCcR}{\ZFC +  \RC + 2^\omega = \omega_2}
\newcommand{\SCFA}{{\sf SCFA}}
\newcommand{\FRP}{{\sf FRP}}
\newcommand{\GRP}{{\sf GRP}}
\newcommand{\RP}{{\sf RP}}
\newcommand{\WRP}{{\sf WRP}}
\begin{document}

\title[Compactness for small cardinals in mathematics \ldots]{Compactness for small cardinals in mathematics: principles, consequences, and limitations}

\author{Radek Honzik}
\address[Honzik]{
Charles University, Department of Logic,
Celtna{\' a} 20, Prague~1, 
116 42, Czech Republic
}
\email{radek.honzik@ff.cuni.cz}
\urladdr{logika.ff.cuni.cz/radek}

\thanks{
The author has been supported by Czech Science Foundation (GA{\v C}R) grant ``The role of set theory in modern mathematics'' (24-12141S)}

\begin{abstract}
We discuss some well-known compactness principles for uncountable structures of small regular sizes ($\omega_n$ for $2 \le n<\omega$, $\aleph_{\omega+1}$, $\aleph_{\omega^2+1}$, etc.), consistent from weakly compact (the size-restricted versions) or strongly compact or supercompact cardinals (the unrestricted versions). We divide the principles into \emph{logical principles}, which are related to cofinal branches in trees and more general structures (various \emph{tree properties}), and \emph{mathematical principles}, which directly postulate compactness for structures like groups, graphs, or topological spaces (for instance, countable chromatic and color compactness of graphs, compactness of abelian groups, $\Delta$-reflection, Fodor-type reflection principle, and Rado's Conjecture).

We also focus on \emph{indestructibility}, or \emph{preservation}, of these principles in forcing extensions. While preservation adds a degree of robustness to such principles, it also limits their provable consequences.  For example, several well-known mathematical problems decided by $V = L$ and by forcing axioms, in the opposite ways, i.e.\ Suslin Hypothesis, Whitehead's Conjecture, Kaplansky's Conjecture, and Baumagartner's Axiom,  are independent from some of the strongest forms of compactness at $\omega_2$. This is a refined version of Solovay's theorem that large cardinals are preserved by  small forcings and hence cannot decide many natural problems in mathematics.  Additionally, we observe that Rado's Conjecture plus $2^\omega = \omega_2$ is consistent with the negative solutions of some of these conjectures (as they hold in $V = L$), verifying that they hold in suitable Mitchell models.

Finally, we comment on whether the compactness principles under discussion are good candidates for axioms. We consider their consequences and the existence or non-existence of convincing unifications (such as Martin's Maximum or Rado's Conjecture). This part is a modest follow-up to the articles by 
Foreman ``Generic large cardinals: new axioms for mathematics?'' (1998) and Feferman et al.\ ``Does mathematics need new axioms?'' (2000).
\end{abstract}

\keywords{Compactness principles, reflection principles, large cardinals, generic large cardinals, forcing axioms, foundations}
	\subjclass[2010]{03E25, 03E35, 03E50, 03E55, 03E57, 03A05}
	\maketitle

\tableofcontents

\section{Introduction}

There are many natural concepts in mathematics formulated in terms of compactness: given an infinite cardinal $\kappa$ and a structure $A$ of size $\kappa$, is it the case that a given property $\varphi$ holds in $A$ if and only if $\varphi$ holds in all substructures of $A$ of size $<\kappa$?   Consider the following examples for a cardinal $\kappa \ge \omega_2$:

\begin{enumerate}
\item Suppose $P$ of size $\kappa$ is a partially ordered set such that all suborders of size $< \kappa$ can be decomposed into countably many chains. Does it follow that $P$ can be decomposed into countably many chains?
\item Suppose $T$ is a tree of size $\kappa$ and every subtree of size $<\kappa$ can be decomposed into countably many antichains. Does it follow that $T$ can be decomposed into countably many antichains?
\item Suppose $G$ is a graph of size $\kappa$ and all its subgraphs of size $<\kappa$ have a countable chromatic number. Does it follow that $G$ has a countable chromatic number?
\item Suppose $A$ is an abelian group of size $\kappa$ and all its subgroups of size $<\kappa$ are free. Does it follow that $A$ is free?
\end{enumerate}

Since $\kappa$ is uncountable, the properties $\varphi$ in these examples are not first-order and therefore are not entailed by compactness of the usual first-order logic (denoted $L_{\omega,\omega}$). However, in most of the cases (and in all examples mentioned in the previous paragraph), the given property $\varphi$ is expressible in an infinitary logic $L_{\kappa,\kappa}$, which allows formulas of length $<\kappa$ with $<\kappa$ many quantifiers and connectives. If $\kappa$ is compact for $L_{\kappa,\kappa}$  and theories of size $\kappa$---we call such $\kappa$ \emph{weakly compact},\footnote{If there is no restriction on the size of theories,  $\kappa$ is called \emph{strongly compact} and the examples (1)--(4) are true for structures of unlimited size with respect to substructures of size $<\kappa$.}---then all four questions above are answered positively.  However, the usefulness of this form of compactness is limited by the fact that every weakly compact cardinal $\kappa$ is necessarily inaccessible and hence quite far away from the size of usual objects in mathematics.

One way of bringing the consequences of weak compactness down to small cardinals is to consider only specific principles which might consistently hold at accessible cardinals. For instance, as we will review, (2) can consistently hold at $\kappa = \omega_2$ (a consequence of Rado's Conjecture, see Section \ref{sec:graphs}) and (4) can hold at $\kappa = \aleph_{\omega^2+1}$ (see Section \ref{sec:ab}). However, for (1) (a local version of  Galvin's Conjecture) and (3) it is still open whether they can hold below a weakly compact cardinal (see Section \ref{sec:graphs}). These examples illustrate that it is unclear, a priori, which principles can consistently hold for small cardinals, and whether there are uniform methods to discover them.

We will survey recent development in this area, with a broader goal in mind of discussing whether compactness principles are good candidates for axioms in mathematics. This goal is an updated version of the original program proposed by G{\"o}del in \cite{G:famous} to look for consequences of large cardinal axioms in order to decide independent statements like the Continuum Hypothesis, $\CH$. By Solovay's observation that a large cardinal $\kappa$ is preserved by all forcings of size $<\kappa$ (see \cite{LS:m}), G{\"o}del's program necessarily fails for independent statements whose truth can be changed by small forcings. These include $\CH$ and many other, for instance all the principles we discuss in Section \ref{sec:preserve} like Suslin Hypothesis, Whitehead's Conjecture or Baumgartner's axiom.\footnote{Some important problems related to the reals are decided by large cardinals, though: If there is a supercompact cardinal (in fact, infinitely many Woodin cardinals with a measurable cardinal above them are enough), Projective Determinacy, $\PD$, holds and consequently all definable subsets of the reals are Lebesgue-measurable and have other regularity properties. We will observe in Section \ref{sec:con} that by combining results of Weiss and Steel, $\PD$ is already implied by a compactness principle $\ITP_{\omega_2}$ related to generalized trees of height $\omega_2$.}

An updated version of G{\"o}del's program, which considers  compactness principles at small cardinals, circumvents Solovay's observation and as such may have a greater chance of deciding these statements. For example, compactness principles related to trees at $\omega_2$ (see Section \ref{sec:lists}) imply the negation of $\CH$, even though they do not imply an upper bound for $2^\omega$. One can broaden G{\"o}del's program even further, and include \emph{forcing axioms} as candidates for new axioms. Axioms like $\PFA$, Proper Forcing Axiom, and $\MM$, Martin's Maximum, were shown to be extremely powerful and capable of deciding almost all traditional independent problems in mathematics (which are usually decided by $V = L$ in the opposite way).

Even though forcing axioms and compactness principles are sometimes treated as two distinct concepts, they share structural similarity because they can both  be formulated in terms of the existence of certain non-principal ultrafilters on infinite Boolean algebras. While compactness of $L_{\kappa,\kappa}$ generalizes the Boolean Prime Ideal Theorem, $\BPI$, and asserts the existence of non-trivial ultrafilters on Boolean algebras which are closed under countable intersections, forcing axioms generalize the Baire Category Theorem, $\BC$, by requiring that for all Boolean algebras in a certain class, there are ultrafilters which meet any given list of $\omega_1$-many dense open subsets.  It is remarkable that for an appropriately chosen class of Boolean algebras (derived from proper and semi-proper forcings), the latter concept related to $\BC$ at $\omega_2$ implies many compactness principles originating from the compactness of $L_{\kappa,\kappa}$, and moreover provides solutions of many  problems in mathematics seemingly unrelated to the existence of ultrafilters on Boolean algebras. This is a powerful extension of the method of forcing which from consistency results shifted to provable consequences of a single axiom. However, this generalization of $\BC$ seems to be at the moment tightly connected with cardinals $\omega_1$ (number of dense open sets) and $\omega_2$ (size of the continuum), leaving many problems outside its scope.\footnote{Martin's Axiom, $\mathsf{MA}$, for ccc partial orders can be asserted for more than $\omega_1$-many dense sets if $2^\omega > \omega_2$. However, $\PFA$ already implies $2^\omega = \omega_2$ and thus only $\omega_1$-dense open sets can be met. In general, there are provable restrictions for forcing axioms on higher cardinals; see for instance \cite{MR689980}, \cite{Shelah:FA} and \cite{todorčević2020inconsistentforcingaxiomomega2} for more details.} 

The inherent limitation of forcing axioms to the cardinal $2^\omega = \omega_2$ suggests that some other principles---such as compactness---might be considered to decide properties of larger structures. The investigation of compactness has the additional benefit of identifying principles which go beyond forcing axioms:  some compactness principles are provably false at $\omega_2$ (for instance compactness for abelian groups or chromatic compactness of graphs, see Theorem \ref{th:fullc} and Section \ref{sec:graphs}), or incompatible with forcing axioms (for instance Rado's Conjecture, see Section \ref{sec:graphs}).

An important aspect of discussion of compactness principles is their \emph{indestructibility} or \emph{preservation} with respect to various forcing notions. As we will review, most compactness principles originating from the compactness of $L_{\kappa,\kappa}$ are preserved by large classes of forcing notions. This might be interpreted positively from the philosophical perspective as lending a degree of robustness and stability to these principles. However, it also prevents them from deciding independent statements whose truth can be changed by forcings from these classes. This in a sense recreates Solovay's restriction for the compactness principles at small cardinals: statements independent from large cardinals such as Suslin Hypothesis or Whitehead's Conjecture we mentioned above remain independent from compactness principles such as Fodor-type Reflection Principle, $\FRP$, and Ineffable Slender Tree Property, $\ISPii$, as well (see Section \ref{sec:sep}).\footnote{It is worth observing that forcing axioms, i.e.\ generalizations of Baire's Category Theorem, behave differently in this respect: forcing axioms are always destroyed by adding just a single Cohen real (which adds an $\omega_1$-Suslin tree). However, fragility is not limited to forcing axioms and starts to appear as principles grow in strength: for instance, Weak Reflection Principle and Rado's Conjecture are destroyed by adding a single new real (see Section \ref{sec:graphs}).} With G{\"o}del's program in mind, these considerations suggest that compactness principles which are easier to destroy appear to be better candidates for new axioms.

The article is structured as follows. 

In Section \ref{sec:omega} we briefly review consequences of compactness of the usual  first-order logic $L_{\omega,\omega}$, and  single out specific compactness principles which we will discuss in the generalized setting of $L_{\kappa,\kappa}$ for an uncountable $\kappa$.

In Section \ref{sec:unctble} we discuss logical compactness principles. We chose to call them ``logical'' because they characterize, modulo inaccessibility, the compactness of various infinitary logics. We will briefly discuss infinitary logics in Section \ref{sec:inacc}, and turn to discussing compactness principles associated with them in Section \ref{sec:lists}. In Section \ref{sec:con} we list known consequences of these principles.

In Section \ref{sec:math} we focus on ``mathematical'' compactness principles, which directly postulate compactness for specific  structures like graphs, algebras, or topological spaces. An important difference from the logical principles is their dependence on stationary reflection: non-reflecting stationary subsets of $\kappa$ often suffice to construct incompact mathematical structures of size $\kappa$, though, importantly, not incompact trees which appear in the logical principles. Stationary reflection is relatively weak in terms of consequences, but if it is generalized to stationary subsets of $[\kappa]^\theta$, for some $\theta \le \kappa$,\footnote{For infinite cardinals $\theta \le \kappa$, $[\kappa]^\theta$ denotes the set of all subsets of $\kappa$ of size $\theta$. The case $[\kappa]^\omega$ is the most important one in many contexts (such as for the forcing axioms). The notation $\power_\theta(\kappa)$ is used to denote the set of all subsets of $\kappa$ of size $<\theta$. (There is no reason to have two different notations, but we follow the prevalent notational conventions.)} it often becomes a sufficient condition for compactness of many mathematical structures (we will specifically discuss the \emph{Fodor-type Reflection Principle, $\FRP$}, a consequence of Martin's Maximum, and the \emph{$\Delta$-reflection}).

In Section \ref{sec:canonical} we describe standard constructions for collapsing large cardinals which yield models with compactness principles.   

In Section \ref{sec:preserve}  we survey existing preservation results. We first review absolute theorems in Section \ref{sec:abs} and then model-related results in Section \ref{sec:model}, which are connected to the standard models mentioned in Section \ref{sec:canonical}.  In Section \ref{sec:sep} we illustrate  applications of preservation theorems by showing that many well-known consequences of Martin's Maximum, $\MM$, such as the Suslin Hypothesis, Whitehead's Conjecture, Baumgartner's Axiom and Kaplansky's Conjecture, and the maximal value of cardinal invariants, are all independent from a theory which contains a very strong fragment of the compactness-type consequences of $\MM$ (for example, the Fodor-type Reflection Principle or the strong tree properties at $\omega_2$). However, a lack of indestructibility alone does not automatically guarantee more consequences. We will discuss Rado's Conjecture---a principle incompatible with forcing axioms, which is destroyed by adding a single new real. We will show that the negation of the Suslin Hypothesis, the negation of Baumgartner's axiom and the negation of Whitehead's Conjecture are consistent with Rado's Conjecture $+$ $2^\omega = \omega_2$, but we will leave open whether the positive versions are consistent as well.\footnote{We define the positive versions of these conjectures to hold under $\PFA$. Thus Rado's Conjecture + $2^\omega = \omega_2$ does not decide these statements the way $\PFA$ does. For the consistency of the positive versions, new models of $\RC$, in addition to the Levy and Mitchell collapses, need to be found.}

In the final section, Section \ref{sec:eval}, we mention that the existence or non-existence of natural unifications among various compactness principles may serve as a good criterion for adopting them as new axioms. This discussion is a modest and limited follow-up to the articles of Foreman \cite{MR1648052} and Feferman et al.\ \cite{4}.

\brm
This expository article is intended for a wide audience interested in the applications of set-theoretical concepts and methods in general mathematics. The author acknowledges the support of grant \emph{The role of set theory in modern mathematics} (Czech Science Foundation, GAČR 24-12141S).\erm

\textbf{Acknowledgements.} The author wishes to thank (alphabetically) to Saka{\'e} Fuchino, Assaf Rinot, and Corey Switzer for motivating discussions and valuable feedback and suggestions regarding the article.

\subsection{Preliminaries and notation}\label{sec:note}

We define all compactness principles which appear in the main body of the text and which we discuss in some detail. We relax this convention for footnotes and sections where we discuss consequences of various principles (like in Section \ref{sec:con}), but we always give references to articles with more details and definitions.

Our notation is standard, as in Kanamori \cite{KANbook} and Jech \cite{JECHbook}. For more details on  large cardinals, combinatorial principles and forcing axioms, we refer the reader to monographs  \cite{KANbook} and  \cite{JECHbook}, the handbook volumes for set theory \cite{handbook123} and the handbook of set-theoretic topology \cite{topology}. The book \cite{modules} by Eklof and Mekler contains many additional set-theoretic results motivated by research into almost free abelian groups and modules.

There seems to be no comprehensive survey of compactness principles in the strict sense of the word (which is one of the reasons why we have written up this one), but Cummings \cite{CUM:comb} provides a clearly written survey of some of the notions related to compactness.

Finally, let us mention some specific conventions which we use: 

\begin{itemize}
\item We write $\omega_\alpha$ to denote infinite cardinals, with $\omega$ denoting the least infinite cardinal (the set of natural numbers).
\item All abbreviations of combinatorial principles are typeset using sans serif font for ease of reading, e.g.\ $\TP(\omega_2)$ for the tree property at $\omega_2$, $\ZFC$ for the Zermelo-Fraenkel set theory with the Axiom Choice ($\AC$), $\GCH$ for the Generalized Continuum Hypothesis, etc.
\item By the Singular Cardinal Hypothesis, $\SCH$, we will always mean the assertion that for every singular strong limit cardinal $\kappa$, $2^\kappa = \kappa^+$.\footnote{$\SCH$ is often viewed as a compactness principle provable in $\ZFC$ for uncountable cofinalities by Silver's theorem: if $\kappa$ is a singular strong limit cardinal of uncountable cofinality and $2^\mu = \mu^+$ for all $\mu <\kappa$ (in fact stationarily many such $\mu$ suffice), then $2^\kappa = \kappa^+$. This contrasts with the countable cofinality which is known to behave differently (Shelah's \emph{pcf theory} extends many of these results to countable cofinality, Shelah \cite{MR1318912} for more details). Shelah later extended Silver's theorem to a more general form of compactness related to abelian groups and other structures of singular size (including countable cofinalities) and proved in $\ZFC$ his \emph{singular compactness theorem}, see \cite{Shelah:ab}. See also Remark \ref{rm:R3}.}
\end{itemize}

\section{First-order compactness} \label{sec:omega}

Let us recall the compactness theorem for the first-order logic:

\begin{itemize}
\item ($\CT, \BPI$) Compactness theorem for the first-order logic: Given a first-order theory $T$ in an arbitrarily large language, $T$ has a model if and only if every finite subtheory of $T$ has a model. 
\end{itemize}

It was soon observed that $\CT$---a theorem about a specific logic---has many combinatorial equivalents or consequences which refer to  well-known mathematical structures like graphs, algebras or topological spaces. In this reformulation, the compactness theorem asserts that first-order properties of an \emph{infinite} structure are determined by the properties of all of its \emph{finite} substructures. Let us state some important examples which are paradigmatic for the generalization to an uncountable $\kappa$.

\brm
It is known that the compactness theorem $\CT$ is provable in $\ZFC$ but cannot be proved in $\ZF$. It is also known that $\ZF + \CT$ does not prove $\AC$ (not even its weakenings like the principle of Dependent Choices).  In the strict sense of the word, a ``compactness principle'' for $L_{\omega,\omega}$ should mean a principle derivable from $\CT$, without the use of (a form of) $\AC$. While this distinction may be justified for $L_{\omega,\omega}$ in the context of $\ZF$, it is of lesser importance  for compactness of $L_{\kappa,\kappa}$, $\kappa > \omega$, in the context of $\ZFC$:\footnote{$\AC$ adds some genuinely new consequences over $\CT$, but it can also be seen as a ``constructive version'' of $\CT$: The transfinite recursion theorem (provable in $\ZF$) together with $\AC$ provides explicit constructions of objects whose existence is postulated by $\CT$  without saying how they should be constructed (such as a recursive construction of an ultrafilter extending the Frechet filter on natural numbers using a well-ordering of $\power(\omega)$, or a construction of a completion of a theory using a well-ordering of its language). However, $\AC$  does not have similar benefits for $L_{\kappa,\kappa}$ because constructions using the transfinite recursion may not retain infinitary properties at  stages  of small  cofinalities (like $\sigma$-completeness of filters at stages of countable cofinality when a construction of a non-principal $\sigma$-complete ultrafilter is attempted). Compactness principles for $L_{\kappa,\kappa}$ therefore postulate the existence of the desired objects, but without a uniform principle for their construction (it is an open question whether there exists one, see also Remark \ref{rm:kAC}).} For instance, while Ramsey theorem on the size of homogenous sets in infinite graphs does not follow from $\CT$, its generalization to an inaccessible $\kappa$ is equivalent to a generalized version of K{\"o}nig's Lemma (the tree property) and also to the compactness of $L_{\kappa,\kappa}$ (see Theorem \ref{th:full}).
\erm

\brm \label{rm:size}
In order to compare the strength of  principles derivable from $\CT$ and $\AC$, we work in $\ZF$. This implies that assumptions must be stated more carefully: For instance, even if  $\ZF$ does not to prove that every infinite graph has either an infinite clique or an infinite independent set (Ramsey theorem), it does prove that every \emph{countable}  graph has either an infinite clique or an infinite independent set (if the domain of the graph is well-ordered, the usual proof works in $\ZF$).
\erm

The book \cite{Rubin:AC} by Howard and Rubin contains an extensive list of principles equivalent to $\AC$ and $\CT$ (Form 14 in \cite{Rubin:AC}) and lists many other principles which follow from $\AC$, with detailed references and results on their relative strengths. References for all statements below can be found in \cite{Rubin:AC}. See also Jech \cite{Jech:AC} for an extended discussion and proofs. 

The following are equivalent:

\begin{itemize}
\item The compactness theorem $\CT$.

\item The compactness theorem for propositional logic. The completeness theorems for propositional and first-order logic.

\item Ultrafilter theorem: Every filter over an infinite set $S$ can be extended into an ultrafilter.

\item Boolean Prime Ideal Theorem, $\BPI$: Every Boolean algebra has a prime (= maximal) ideal.

\item Consistency principle: For every binary mess $M$ on a set $S$ there is a function $f$ on $S$ which is consistent with $M$, introduced by Jech \cite{jech:tree}. This principle is studied today in the context of cofinal and ineffable branches in $(\kappa,\lambda)$-lists, see Section \ref{sec:lists} for more details.

\item Tychonoff's theorem for compact Hausdorff spaces: Every product of compact Hausdorff spaces is compact.

\item Every commutative ring with unit has a prime ideal (an ideal $I$ is prime if $ab \in I$ implies $a \in I$ or $b \in I$).

\item Finite chromatic numbers: If $G$ is a graph and there exists a natural number $n \ge 3$ such that  every finite subgraph of $G$ is  $n$-colorable, then $G$ itself is $n$-colorable.
\end{itemize}

Some other well-known principles follow from $\BPI$, but are strictly weaker. Let us state some examples which are relevant for us:

\begin{itemize}

\item  K{\"o}nig's Lemma that every infinite tree with finite levels has an infinite branch (Form 10 in \cite{Rubin:AC}).

\item Dilworth's decomposition theorem: If $P$ is an infinite partial order and there exists a natural number $n$ such that every antichain  in $P$ has size at most $n$, then $P$ can be decomposed into at most $n$ many chains. See \cite{T:Dil} for more details.\footnote{Dilworth's theorem can also be stated as follows: If $P$ is an infinite partial order and every finite suborder of $P$ can be decomposed into at most $n$ many chains, so can be the whole $P$. Mirsky's theorem asserts the dual principle formulated for antichains; interestingly, while Dilworth's theorem requires compactness for its proof, Mirsky's theorem can be proved in $\ZF$. Rado's Conjecture can  be seen as a generalization of Mirsky's theorem for certain tree orders, and Galvin's Conjecture as a generalization of Dilworth's theorem (it is known that Galvin's Conjecture implies Rado's Conjecture).} This principle is generalized as Galvin's and Rado's Conjectures, see Section \ref{sec:graphs}.

\item The existence of a Lebesgue non-measurable subset of the unit interval.

\end{itemize}

Some other principles follow from $\AC$ but do not follow from $\BPI$. Relevant for us are for instance the following (all strictly weaker than $\AC$):

\begin{itemize}

\item Baire category theorem, $\BC$, for compact Hausdorff spaces, which is equivalent to the axiom of Dependent Choices, $\DC$, and hence logically independent over $\ZF$ with respect to $\BPI$. An equivalent reformulation (modulo $\BPI$) of this principle for partial orders is Rasiowa--Sikorski lemma that for every partial order $\P$ and a list of countably many dense open sets, there is filter which meets all of them.

\item Ramsey theorem that every infinite graph contains either an infinite clique of an infinite independent set (Form 17 in \cite{Rubin:AC})). See also \cite{Blass:RT} that it does not imply the $\AC$ or $\BPI$, and does not follow from $\BPI$.

\item  Theorems that every subgroup of a free (abelian) group is free (attributed to Nielsen--Schreier for non-abelian free groups and to Dedekind for abelian groups). See \cite{H:sub} and \cite{KL:NS} for proofs that these theorems do not imply $\AC$ or $\BPI$, and do not follow from $\BPI$.

\end{itemize}

It is instructive to compare these principles with full equivalents of $\AC$, formulated in terms of algebraic and topological structures. For instance, the following are equivalent to $\AC$:

\begin{itemize}
\item In every vector space, every generating set contains a basis.
\item Every graph has a chromatic number.
\item Tychonoff's theorem for compact spaces: Every product of compact spaces is compact.
\item For every abelian group $G$ and its subgroup $H$, there exists a set of representatives for the cosets in the quotient $G/H$.
\end{itemize}

As we already mentioned in Remark \ref{rm:size},  compactness principles are formulated in $\ZF$ with reference to infinite structures and not for specific cardinalities, but the first infinite cardinal $\omega$ plays a special role in $\CT$: first-order properties of substructures of size $<\omega$ determine the whole infinite structure. When condering generalizations of $\CT$  we always assume the Axiom of Choice, so this distinction transforms into substructures of size $<\kappa$ reflecting up to the whole structure of size $\ge \kappa$ for some \emph{uncountable} cardinal $\kappa$. 

In principle, any consequence of $\CT$ (or $\AC$) can be considered for a generalization. However, some compactness principles at uncountable cardinals do not have a clear analogue on $\omega$ (for instance the notion of a free abelian group or stationary reflection), so it is more appropriate to start with some infinitary logic, study its compactness consequence and then try to apply them at small cardinals. We review some basic facts related to infinitary logics in Section \ref{sec:unctble}.

\brm \label{rm:kAC}
The various compactness principles in $\ZF$ form a complex hierarchy with $\AC$ at the top. In $\ZFC$, all these principles become provable equivalent and the hierarchy collapses. It is open  whether there is a similar ultimate (not outright inconsistent, i.e.\ consistent modulo some established large cardinals) compactness principle for an uncountable $\kappa$ which would imply all (or many) other.\footnote{\label{ft:kAC} Martin's Maximum, $\MM$, and its strengthenings like $\MM^{++}$,  imply many compactness principles at $\omega_2$ and can be considered as candidates for such principles at $\omega_2$, see Viale \cite{MR4713473} for a clearly written exposition and further references. For an inaccessible $\kappa$, a non-trivial embedding with critical point $\kappa$, $j: V\to V$, was introduced by Reinhardt and briefly considered as the ultimate large cardinal principle before being proved by Kunen \cite{KUNEN:v} to be inconsistent with $\ZFC$ (consistency with $\ZF$ is still open; see \cite{doi:10.1142/S0219061324500132} for recent developments with regard to $\ZF$).} We briefly discuss this unification problem---as a criterion for new axioms---in final Section \ref{sec:uni}.
\erm

\section{Compactness in logic}\label{sec:unctble}

We first consider compactness principles called \emph{(strong) tree properties} which are tightly connected with compactness of infinitary logics $L_{\kappa,\kappa}$. In fact, they fully characterize weakly compact, strongly compact and supercompact cardinals of $\kappa$ once the assumption of inaccessibility of $\kappa$ is added. For this reason we call them ``logical compactness principles'' to differentiate them from principles discussed in Section \ref{sec:math} which refer to specific mathematical structures such as graphs, groups, or topological spaces.

\subsection{Infinitary logics}\label{sec:inacc}

Suppose $\mathscr{L}$ is a logic in a broad sense (according to the examples below).\footnote{See Kanamori's book \cite{KANbook} for more details about large cardinals mentioned below and Keisler's book \cite{K:inf} or the handbook by Barwise and Feferman \cite{MR3728293} for more information about infinitary logics.}

\begin{definition}\label{def:sk}
We say that $\kappa$ is $\mathscr{L}$-compact if and only if for every set of sentences $A$ in $\mathscr{L}$, $A$ has a model if and only if all subsets $B \sub A$ with $|B|<\kappa$ have a model.
\end{definition}

Suppose $\kappa$ is an infinite regular cardinal. The  logic denoted $L_{\kappa,\kappa}$ allows formulas of length $<\kappa$ with $<\kappa$ many quantifiers, conjunctions and disjunctions (the size of the vocabulary of $L_{\kappa,\kappa}$ can in principle arbitrarily big). In this notation, $L_{\omega,\omega}$ denotes the usual first-order logic, and $\omega$ is $L_{\omega,\omega}$-compact.  If $\kappa > \omega$, it is for instance possible to express in $L_{\kappa,\kappa}$ the property of being a well-ordering, of having any fixed cardinality below $\kappa$ or the notion of a separable  topological space (see for instance Dickmann \cite{Dickmann} for more examples).

\begin{definition} A cardinal $\kappa>\omega$ is called \emph{strongly compact} if and only if $\kappa$ is $L_{\kappa,\kappa}$-compact over an arbitrarily large language, equivalently, without a limit on the size of the set of sentences $A$ in Definition \ref{def:sk}. If we limit the size of $A$ in Definition \ref{def:sk} to $|A|\le \kappa$ and $\kappa$ is $L_{\kappa,\kappa}$-compact in this weaker sense, we say that $\kappa$ is \emph{weakly compact}.
\end{definition}

\brm
A fine point is whether one allows infinitely many alternating quantifiers in the syntax of $L_{\kappa,\kappa}$. Karp \cite{Karp} apparently defines the infinitary formulas by recursion along $\omega$ with (finitely many) infinite blocks of the \emph{same} quantifiers of length $<\kappa$ allowed. Dickmann \cite{Dickmann}, Jech \cite{JECHbook} or Kanamori \cite{KANbook} do not explicitly refer to this point. However, they do claim that the satisfaction of $L_{\kappa,\kappa}$ formulas in first-order structures is defined ``naturally'', which we interpret to mean that infinitely many alternating quantifiers are not allowed.\footnote{\label{middle} The satisfaction of formulas with infinitely many alternating quantifiers is defined in terms of Gale--Stewart game semantics and winning strategies, and hence non-trivially extends the usual Tarski-style definition of satisfaction. Moreover, its properties depend on additional set-theoretic assumptions: for instance if $\psi$ denotes the formula $(\forall x_0)(\exists x_1) \cdots \varphi(\seq{x_n}{n<\omega})$, for a quantifier free $\varphi$, and the satisfaction of $\neg \psi$ is reasonably defined to be equivalent to the satisfaction of $(\exists x_0)(\forall x_1) \cdots \neg \varphi(\seq{x_n}{n<\omega})$, then the Law of the Excluded Middle may fail for the formula $\psi \vee \neg \psi$: It is easy to check that if $\varphi$ is of the form $\seq{x_n}{n<\omega} \in A$, for $A \sub \omega^\omega$, then $\psi \vee \neg \psi$ is equivalent to $A$ being determined as a subset of the Baire space $\omega^\omega$ (under the countable Axiom of Choice for $\omega^\omega$). It is known that $\AC$ implies that not all $A \sub \omega^\omega$ are determined, while $\AD$ postulates that all $A \sub \omega^\omega$ are determined (see \cite{KANbook}).} In any case, for compactness-related topics, the restricted version of infinitary formulas is entirely sufficient because the notion of compactness of $L_{\kappa,\kappa}$ reduces to compactness of its propositional fragment.\footnote{It is easy to check that compactness of the propositional fragment of $L_{\kappa,\kappa}$ (sometimes denoted $L_{\kappa,0}$) implies that every $\kappa$-complete filter can be extended into a $\kappa$-complete ultrafilter, thus it fully characterizes the standard notion of strong compactness of $\kappa$.} To have an unambiguous definition which yields the usual characterization of strong compactness, and moreover avoids the complexities related to  the Law of the Excluded Middle mentioned in Footnote \ref{middle}, we view $L_{\kappa,\kappa}$ as not allowing infinitely many alternating quantifiers.\footnote{Formulas with infinitely many alternating quantifiers are studied in set theory in the specific context of determinacy of games (e.g., the Axiom of Determinacy) even though a connection to $L_{\kappa,\kappa}$ is usually not explicitly mentioned. Additionally, some topos theorists and categorical logicians study such formulas in the context of infinitary proof systems (often under the name \emph{heterogenous} infinitary logics (see for instance \cite{Esp})).}
\erm

The logic $L_{\kappa,\kappa}$ can be strengthened to higher-order infinitary logics $L_{\kappa,\kappa}^n$ for $n<\omega$. Magidor showed in \cite{MAG:logic} that compactness of $L_{\kappa,\kappa}^n$ for $n>2$ reduces to $n = 2$ and that $\kappa$ is $L^2_{\kappa,\kappa}$-compact if and only if $\kappa$ is a certain large cardinal called \emph{extendible}. In particular, the least extendible cardinal $\kappa$ is the least cardinal for which the usual second order logic $L^2_{\omega,\omega}$ is compact. 

Makowski showed in \cite{MAK:logic} shows that the \emph{Vop{\v e}nka cardinal} characterizes in a certain sense the compactness of all finitely generated logics. 

Thus, weakly compact, strongly compact, extendible and Vop{\v e}nka cardinals are all definable through compactness of certain logics. 
Moreover, the notion of compactness from Definition \ref{def:sk} can be generalized by reference to omitting types, yielding the notion of \emph{compactness for omitting types}, which extends the logical characterization to include more large cardinals. 

Boney \cite{MR4093885} analysed this concept and proved $L_{\kappa,\kappa}$-compactness for omitting types characterizations of several other large cardinals like supercompact or $n$-huge cardinals, which are consistency-wise sufficient for all examples we will consider in this article (but even rank-to-rank large cardinals can be characterized in this way in some second-order logics if required). The article \cite{MR4093885} was  extended by a follow-up article \cite{MR4855327} Boney et al.\ which adds compactness-type characterization  for practically all large cardinals (for instance Woodin cardinals or subtle cardinals).\footnote{The articles \cite{MR4093885} and \cite{MR4855327} are carefully written and contain comprehensive up-to-date bibliography related to infinitary logics.} These results make the logical approach to compactness completely general and provide an intuitive justification for large cardinals. 

This being said, the study of large cardinals is usually carried out via combinatorial characterizations expressible in the first-order set theory $\ZFC$ which are more easily applicable to mathematical concepts. This is what we will do as well.

\brm
There is one combinatorial characterization which stands apart as the most universal one, characterizing the majority of large cardinals -- i.e.\ the existence of $\mu$-complete non-principal \emph{ultrafilters}, for $\mu \ge \omega$, over some underlying set $X$.  Postulating the existence of $\mu$-complete ultrafilters over $X$ for appropriate $\mu > \omega$ and $X$ typically yields a straightforward proof of compactness of various logics, for instance the strong compactness of $L_{\kappa,\kappa}$. However,  the existence of countably complete non-principal ultrafilters on $\kappa$ implies that $\kappa$ must be a measurable cardinal or above a measurable cardinal, which makes this principle inconsistent on small cardinals and hence of limited interest for this article. 
\erm

\subsection{Tree-like characterizations of compactness} \label{sec:lists}

Weakly compact, strongly compact and supercompact cardinals $\kappa$ can be defined by compactness properties related to trees or more general tree-like systems. These characterizations have the important benefit of explicitly factoring out the inaccessibility of $\kappa$ and isolating combinatorial principles which can be naturally formulated for small cardinals, such as $\omega_2, \aleph_{\omega+1}$, or $\aleph_{\omega^2+1}$, as well.

In sections \ref{sec:slender},  \ref{sec:gmp} and \ref{sec:con} we will review  basic definitions and results and discuss them also in the context of modern development and connections with the Guessing Model Principle, first considered as a consequence of $\PFA$ and studied by Viale and Weiss (see for instance \cite{VW:PFA}). 

\subsubsection{Thin and slender lists}\label{sec:slender}

Recall the following well-known characterization of weak compactness which first appeared in Erd{\H o}s and Tarski \cite{ET:wc}.

\begin{definition}\label{def:TP}
We say that a regular uncountable cardinal $\kappa$ satisfies \emph{the tree property}, and we write $\TP(\kappa)$, if and only if every $\kappa$-tree (i.e.\ a tree of height $\kappa$ with all levels of $T$ having size $<\kappa$)  has a cofinal branch.
\end{definition}

\begin{Fact}[Erd{\H o}s--Tarski \cite{ET:wc}]
A uncountable cardinal $\kappa$ is weakly compact iff $\kappa$ is inaccessible and $\TP(\kappa)$ holds.
\end{Fact}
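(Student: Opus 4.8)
The statement to prove is an \emph{equivalence}, so the plan splits into two directions, and the organizing idea is a dictionary between $\kappa$-trees and theories of $L_{\kappa,\kappa}$: a $\kappa$-tree is an object all of whose levels have size $<\kappa$, and the search for a cofinal branch is exactly a $<\kappa$-local-to-global problem of the kind $L_{\kappa,\kappa}$-compactness resolves. The inaccessibility of a weakly compact $\kappa$ I will treat as known (it is already asserted in the text), so the genuine content is: (a) $L_{\kappa,\kappa}$-compactness for theories of size $\le\kappa$ implies $\TP(\kappa)$, and (b) inaccessibility together with $\TP(\kappa)$ implies $L_{\kappa,\kappa}$-compactness for theories of size $\le\kappa$. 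Regularity of $\kappa$ (part of inaccessibility) will be used freely in both directions.

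For direction (a), let $T$ be a $\kappa$-tree; I will encode the search for a cofinal branch as a theory. Take a language with one constant $a_t$ for each node $t\in T$, a binary relation symbol $<_T$, and one fresh constant $d$. The theory consists of: the full atomic diagram of $(T,<_T)$ on the constants $\{a_t\}$; a sentence asserting that $<_T$ is a tree order (its restriction below any point is linearly ordered); and, for each level $\alpha<\kappa$, the sentence $\theta_\alpha:\ \bigvee_{t\in\mathrm{Lev}_\alpha(T)}(a_t<_T d)$. Since each level has size $<\kappa$, every $\theta_\alpha$ is a legal $L_{\kappa,\kappa}$-disjunction, and the whole theory has size $\le\kappa$. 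Any subtheory of size $<\kappa$ mentions $<\kappa$ many of the $\theta_\alpha$ and $<\kappa$ many constants, so by regularity it lives below some level $\beta<\kappa$; interpreting $d$ as any node of level $\beta$ and the $a_t$, $<_T$ by the real tree satisfies it. By compactness the full theory has a model $M$, and then $b=\{t\in T: M\models a_t<_T d\}$ is a chain (comparability of elements below $d$ is forced by the tree-order axiom together with the diagram) meeting every level (by the $\theta_\alpha$), i.e.\ a cofinal branch.

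For direction (b), given a theory $A=\{\varphi_\alpha:\alpha<\kappa\}$ in $L_{\kappa,\kappa}$ all of whose $<\kappa$-subsets are satisfiable, I will build a tree whose branches correspond to models of $A$. Level $\alpha$ will consist of the possible ``local behaviours'' of a model of the partial theory $A_\alpha=\{\varphi_\xi:\xi<\alpha\}$, reduced modulo agreement on the relevant sentences, ordered by refinement; the hypothesis that every $A_\alpha$ is satisfiable guarantees each level is nonempty, and a cofinal branch assembles (as a direct limit) into a single structure satisfying all of $A$. The crucial point—and the step I expect to be the main obstacle—is to verify that each level has size $<\kappa$, so that the system is genuinely a $\kappa$-tree and $\TP(\kappa)$ applies: this is exactly where inaccessibility (strong limit plus regularity, giving $\mu^{<\kappa}<\kappa$ for the relevant counting of sentences and partial types over a bounded vocabulary) is indispensable, and where the bookkeeping must be done carefully. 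Once levels are confirmed small, $\TP(\kappa)$ yields the cofinal branch, completing the equivalence; the inaccessibility half of the biconditional is the standard fact invoked earlier in the text, which I would either cite or recover from the partition property.
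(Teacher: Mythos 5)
The paper gives no proof of this Fact at all --- it is stated with a citation to Erd{\H o}s--Tarski --- so your proposal has to be judged on its own merits rather than against a proof in the text. Your direction (a) is correct and essentially complete: encoding the $\kappa$-tree by its full atomic diagram plus the level-disjunctions $\theta_\alpha$ (each legal since levels have size $<\kappa$), using regularity (available because weak compactness implies inaccessibility, which you may cite from the text) to satisfy any $<\kappa$-sized subtheory inside $T$ itself with $d$ a node of sufficiently high level, and reading off a chain meeting every level from a model of the whole theory is exactly the standard argument.

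Direction (b) has a genuine gap, and it is not the level-counting you flag as the main obstacle (that part is indeed handled by inaccessibility in the way you indicate). The unjustified step is the assembly of a cofinal branch ``as a direct limit'' into a model of $A$. As stated this fails for $L_{\kappa,\kappa}$: if the nodes were actual models of the $A_\alpha$ ordered by $L_{\kappa,\kappa}$-elementary extension, the union along a chain need not be an $L_{\kappa,\kappa}$-elementary extension of its stages, because a witness to a quantifier block $\exists (x_i)_{i<\mu}$ with $\mu$ uncountable is a $\mu$-tuple of the limit structure that need not lie in any single stage; and if the nodes are instead truth-value assignments to sentences (``local behaviours modulo agreement''), then a complete, $<\kappa$-consistent assignment does not by itself produce a structure at all. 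The classical proof closes exactly this hole with a witnessing apparatus that your sketch never introduces: one first expands the language by Henkin constants (or Skolem functions) so that every existential quantifier block has explicit witnesses --- inaccessibility keeps the expanded set of sentences of size $\kappa$ --- then builds the tree of truth-value assignments to initial segments of an enumeration of \emph{all} sentences of the expanded language, a node being admissible when its ``true'' part together with the relevant $A_\alpha$ and the Henkin axioms is satisfiable; strong limitness bounds each level by $2^{|\alpha|}<\kappa$, $\TP(\kappa)$ yields a branch, and the model is then read off as a term model on the Henkin constants from the resulting complete Hintikka-style description. Without some such mechanism the branch cannot be converted into a model, so direction (b) as proposed does not go through; with it, your outline becomes the standard Erd{\H o}s--Tarski/Keisler--Tarski proof (see, e.g., Theorem 4.5 of \cite{KANbook}).
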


This characterization of weak compactness was extended by Jech to strong compactness in \cite{jech:tree} and Magidor to supercompactness \cite{MAG:super}. Since both these characterizations are formulated using the same combinatorial context, we will discuss them together. Recall that $\kappa$ is strongly compact if for every $\lambda \ge \kappa$ there is a fine ($\kappa$-complete) ultrafilter on $\power_\kappa(\lambda)$ and it is supercompact if there a normal ($\kappa$-complete) ultrafilter on $\power_\kappa(\lambda)$.\footnote{See Kanamori \cite{KANbook} for more details. For the notation, we use $\power_\kappa(\lambda)$ to denote the set of all subsets of $\lambda$ of size $<\kappa$.} For an equivalent characterization in terms of trees, it is necessary to find an appropriate two-dimensional generalization of a $\kappa$-tree. Jech defined a two-dimensional system and called it a $(\kappa,\lambda)$-mess. We will use a more recent terminology of  Weiss \cite{Weiss:gentree} and refer to these objects as $(\kappa,\lambda)$-lists. We will distinguish two types of lists, which are equivalent for an inaccessible $\kappa$, but are different for successor cardinals.

\begin{definition}\label{def:list}
Suppose $\kappa \le \lambda$ are cardinals, with $\kappa$ regular uncountable. We call sequence $\seq{d_x}{x \in \power_\kappa(\lambda)}$ a \emph{$(\kappa,\lambda)$-list} if $d_x \sub x$ for every $x \in \power_\kappa(\lambda)$. We say that a $(\kappa,\lambda)$-list $\seq{d_x}{x \in \power_\kappa(\lambda)}$ is

\begin{itemize}

\item \emph{thin} if there is a closed unbounded set $C \sub \power_\kappa(\lambda)$ such that $|\set{d_x \cap y}{y \sub x}|<\kappa$ for every $y \in C$.

\item \emph{$\mu$-slender} for some uncountable $\mu \le \kappa$ if for all sufficiently large $\theta$ there is a club $C \sub \power_\kappa(H(\theta))$\footnote{$H(\theta)$ denotes the set of all sets whose transitive closure has size $<\theta$.} such that for all $M \in C$ and all $y \in M \cap \power_\mu(\lambda)$, $d_{M \cap \lambda} \cap y \in M$.
\end{itemize}

\end{definition}

Note that every $\kappa$-slender list is $\mu$-slender for every $\mu\le \kappa$ and that the family of all $\omega_1$-slender lists is the most extensive (and usually considered as the default option for slender lists unless said otherwise). It is straightforward to show that that every thin list is $\kappa$-slender (see for instance Weiss \cite[Proposition 2.2]{Weiss:gentree}).

As defined, $(\kappa,\lambda)$-lists are not trees in the usual sense, but can be reformulated to be quite similar to trees (see for instance Lambie-Hanson and Stejskalov{\'a} \cite[Definition 1]{ChS:guess} for $\Lambda$-trees). Such reformulations have the benefit of retaining some of the intuition related to trees in this more general setting. To upheld this similarity, certain coherent families of elements of lists are called branches:

\begin{definition} Let $D = \seq{d_x}{x \in \power_\kappa(\lambda)}$ be a \emph{$(\kappa,\lambda)$-list} and $d \sub \lambda$.
\begin{itemize}
\item We say that $d$ is a \emph{cofinal branch} of $D$ if for all $x \in \power_\kappa(\lambda)$ there is $z_x \super x$ such that $d \cap x = d_{z_x} \cap x$.

\item We say that $d$ is an \emph{ineffable branch} if the set $\set{x \in \power_\kappa(\lambda)}{d \cap x = d_x}$ is stationary.
\end{itemize}
\end{definition}

See the clear summary of notions related to closed unbounded and stationary subsets of $\power_\kappa(\lambda)$ in \cite[Section 3]{ChS:guess}. 

The existence of cofinal or ineffable branches in thin and slender lists leads to multiple compactness principles, as first defined in \cite{Weiss:gentree}:

\begin{definition}\label{def:tree-principles} Let $\mu \le \kappa\le \lambda$ be cardinals with $\kappa$ regular uncountable:
\begin{itemize}

\item We say that the \emph{$(\kappa,\lambda)$-tree property} holds and write $\TP(\kappa,\lambda)$ if every \emph{thin} $(\kappa,\lambda)$-list has a \emph{cofinal} branch.

\item We say that the \emph{ineffable $(\kappa,\lambda)$-tree property} holds and write $\ITP(\kappa,\lambda)$ if every \emph{thin} $(\kappa,\lambda)$-list has an \emph{ineffable} branch.

\item We say that the \emph{$(\mu,\kappa,\lambda)$-slender tree property} holds and write  $\SP(\mu, \kappa,\lambda)$ if every \emph{$\mu$-slender} $(\kappa,\lambda)$-list has a \emph{cofinal} branch. We write $\SP(\kappa,\lambda)$ for the strongest principle $\SP(\omega_1,\kappa,\lambda)$.

\item We say that the \emph{ineffable $(\mu,\kappa,\lambda)$-slender tree property} holds and write $\ISP(\mu, \kappa,\lambda)$ if every \emph{$\mu$-slender} $(\kappa,\lambda)$-list has an \emph{ineffable} branch. We write $\ISP(\kappa,\lambda)$ for the strongest principle $\ISP(\omega_1,\kappa,\lambda)$.
\end{itemize}
\end{definition}

In this notation, $\TP(\kappa,\kappa)$ is equivalent to the usual tree property at $\kappa$ which we already denote by $\TP(\kappa)$.

\begin{definition}\label{def:gentree}
To simplify the notation further, we write $\ISPgen{\kappa}$,  $\SPgen{\kappa}$ and $\ITPgen{\kappa}$, $\TPgen{\kappa}$ if $\ISP(\omega_1,\kappa,\lambda)$, $\SP(\omega_1,\kappa,\lambda)$ and $\ITP(\kappa,\lambda)$, $\TP(\kappa,\lambda)$, respectively,  hold for every $\lambda \ge \kappa$. 
\end{definition}

If $\kappa$ is inaccessible, slender lists are by definition also thin.  It follows that for an inaccessible $\kappa$, $\TPgen{\kappa}$ is equivalent to $\SPgen{\kappa}$ and $\ITPgen{\kappa}$ is equivalent to $\ISPgen{\kappa}$, and characterize strong compactness and supercompactness, respectively:

\begin{Fact}[Jech \cite{jech:tree}, Magidor \cite{MAG:super}]
Suppose $\kappa$ is a regular uncountable cardinal. Then:
\bce[(i)]
\item $\kappa$ is strongly compact iff $\kappa$ is inaccessible and $\TPgen{\kappa}$ (equivalently $\SPgen{\kappa}$) holds.
\item $\kappa$ is supercompact iff $\kappa$ is inaccessible and $\ITPgen{\kappa}$ (equivalently $\ISPgen{\kappa}$) holds.
\ece
\end{Fact}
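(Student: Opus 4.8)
The plan is to route everything through fine and normal ultrafilters on $\power_\kappa(\lambda)$, which is the common engine behind Jech's and Magidor's characterizations. First I would use the remark preceding the statement: for inaccessible $\kappa$ every slender list is thin and every thin list is slender, so the two families of lists coincide and it suffices to prove the characterizations for $\TPgen{\kappa}$ and $\ITPgen{\kappa}$; the equivalences with $\SPgen{\kappa}$ and $\ISPgen{\kappa}$ are then automatic. I would also dispose of the ``inaccessible'' clause in the forward directions at once: a strongly compact cardinal is $L_{\kappa,\kappa}$-compact with no size restriction, hence weakly compact, hence inaccessible by the Erd{\H o}s--Tarski Fact, and a supercompact cardinal is strongly compact. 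So both nontrivial tasks split into (a) compactness $\Rightarrow$ the tree property and (b) inaccessible $+$ tree property $\Rightarrow$ compactness.

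For the forward direction of (i), fix $\lambda \ge \kappa$ and a thin $(\kappa,\lambda)$-list $D = \seq{d_x}{x \in \power_\kappa(\lambda)}$, let $U$ be a fine $\kappa$-complete ultrafilter on $\power_\kappa(\lambda)$, and form $j : V \to M = \Ult(V,U)$. Fineness yields a seed $s = [\mathrm{id}]_U$ with $j''\lambda \sub s$ and $s \in \power_{j(\kappa)}(j(\lambda))^M$. I would read off the candidate branch $b = \set{\alpha < \lambda}{j(\alpha) \in (j(D))_s}$ and check cofinality: for a fixed $x \in \power_\kappa(\lambda)$ one has $j(x) = j''x \sub s$ and, directly from the definition of $b$, $j(b) \cap j(x) = (j(D))_s \cap j(x)$, so $M$ satisfies ``there is $z \super x$ with $d_z \cap x = b \cap x$'' (witnessed by $s$), and elementarity pulls this statement back to $V$. (Thinness is not even needed here, consistent with $\TP(\kappa,\lambda)$ being a weakening.) For (ii) I would instead start from a \emph{normal} fine ultrafilter, so that $s = j''\lambda$ exactly; then $\set{x}{d_x = b \cap x} \in U$ by {\L}o{\'s}'s theorem, and since a normal $U$ extends the club filter on $\power_\kappa(\lambda)$, this set is stationary, making $b$ an ineffable branch.

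For the backward direction — the substantive half — I would build, for each target $\lambda$, a thin list whose branches code the desired ultrafilter, following Jech's consistency (``mess'') principle. Enumerate $\power(\power_\kappa(\lambda))$ as $\seq{A_\xi}{\xi < \Lambda}$ with $\Lambda = 2^{\lambda^{<\kappa}}$, and define a $(\kappa,\Lambda)$-list by letting $d_x \sub x$ code, for each small $x$, one coherent local decision ``$\xi \in d_x$ iff $A_\xi$ is large'' that is consistent with fineness and with the Boolean and $\kappa$-completeness relations among the fewer-than-$\kappa$ sets indexed in $x$ (each such decision can be taken to be a restriction of the principal ultrafilter at some point of the relevant fine-filter intersection). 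Because $\kappa$ is a strong limit, $2^{<\kappa} = \kappa$, so there are fewer than $\kappa$ admissible patterns on each $x$ — which is exactly thinness. A cofinal branch $b$ then decides every $A_\xi$ coherently, and since each relation ``$A_\eta = \bigcap_i A_{\xi_i} \Rightarrow (\eta \in b \leftrightarrow \forall i\, \xi_i \in b)$'' is imposed locally at the level indexing $\eta$ and the $\xi_i$, the set $U = \set{A_\xi}{\xi \in b}$ is a fine $\kappa$-complete ultrafilter, witnessing strong compactness at $\lambda$. For (ii) I would run the same construction but invoke $\ITPgen{\kappa}$ to obtain an \emph{ineffable} branch; the stationary set of exact points $\set{x}{d_x = b \cap x}$ upgrades $U$ to a normal ultrafilter (closure under diagonal intersections), which is Magidor's refinement and yields supercompactness.

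The main obstacle I expect is precisely this backward construction: arranging the local constraints richly enough that a merely coherent branch is forced to code a genuine $\kappa$-complete ultrafilter — in particular verifying that $\kappa$-completeness, an apparently $\kappa$-ary condition, is captured by the $<\kappa$-sized local decisions and propagated along the branch — and, in the supercompact case, extracting normality from ineffability rather than from cofinality alone. Inaccessibility enters at exactly two points: the coincidence of thin and slender lists, and the strong-limit width bound that makes the coding list thin. These are the very places where the argument refuses to lift the bare tree properties to strong or super compactness at accessible cardinals such as $\omega_2$.
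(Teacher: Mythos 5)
A preliminary remark: the paper states this as a Fact with citations to Jech \cite{jech:tree} and Magidor \cite{MAG:super} and gives no proof of its own, so your attempt can only be measured against the standard cited arguments. Against those, most of what you write is correct: the reduction (for inaccessible $\kappa$ every list is thin, so the $\SPgen{\kappa}$/$\ISPgen{\kappa}$ clauses are automatic, and strong compactness gives inaccessibility via weak compactness); both forward directions (the ultrapower by a fine, resp.\ normal, $\kappa$-complete ultrafilter on $\power_\kappa(\lambda)$, the branch read off from the seed $[\mathrm{id}]_U$, and, in the normal case, {\L}o{\'s} plus the fact that normal fine ultrafilters extend the club filter, giving ineffability); and the backward direction of (i). For (i), anchoring $d_x$ at a point $z(x)$ chosen in the intersection of the $<\kappa$ many sets $\hat{\alpha}$ indexed in $x$ does make ultraness, fineness, and each instance of $\kappa$-completeness a $<\kappa$-ary, hence local, constraint which a cofinal branch propagates; inaccessibility supplies thinness.

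The genuine gap is the backward direction of (ii), exactly at the point you flagged but did not resolve. Running ``the same construction'' with arbitrarily chosen anchors $z(x)$ and then invoking an ineffable branch does \emph{not} yield normality: whether the index of a diagonal intersection $D = \triangle_{\alpha<\lambda} A_{g(\alpha)}$ belongs to $d_x$ is decided by whether $z(x) \in A_{g(\alpha)}$ for \emph{all} $\alpha \in z(x)$, and nothing forces the indices $g(\alpha)$ of these conditions to lie inside $x$, which is the only place where the equation $d_x = b \cap x$ has any grip; stationarity of $S = \set{x}{d_x = b \cap x}$ creates no coherence between the arbitrary point $z(x)$ and $x$. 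The missing idea---the heart of Magidor's theorem and of Weiss's presentation of it---is the \emph{canonical} anchor: arrange the enumeration so that $\lambda \le \Lambda$ and put $d_x = \set{\xi \in x}{x \cap \lambda \in A_\xi}$. Then for $x \in S$ lying in the club of sets closed under $g$ and the indexing maps, every $\alpha \in x \cap \lambda$ has $g(\alpha) \in b \cap x = d_x$, i.e.\ $x \cap \lambda \in A_{g(\alpha)}$; hence $x \cap \lambda \in D$ and the index of $D$ lands in $b$. Equivalently, normality follows from Fodor's lemma, since $x \mapsto f(x \cap \lambda)$ is regressive on $S$ whenever $f$ is regressive---a step unavailable for arbitrary $z(x)$. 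Note also that the canonical anchor without ineffability is equally insufficient: a cofinal branch only gives agreement $d_z \cap x = b \cap x$ at some $z \super x$, which cannot control the ordinals in $(z \cap \lambda) \setminus x$; and indeed it must fail, since $\TPgen{\kappa}$ for all $\lambda$ yields strong compactness, not supercompactness. So ineffability and the canonical anchor are both essential, and your proposal supplies only the first.
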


Compactness principles on successor cardinals may reveal some distinctions which are not apparent on inaccessibles: at successor cardinals, the formulations with slender and thin lists are no longer equivalent, with the principles referring to slender lists being substantially stronger. Let us give an example illustrating the strength of slender lists at successor cardinals and also some details regarding the role of $\mu$ in the definition of slender lists. This and similar examples indicate that the ``$\omega_1$-slender'' list is the right concept for compactness related to $(\kappa,\lambda)$-lists, with the distinction between ineffable and cofinal branches of lesser importance.\footnote{We will briefly review the consistency and consequences of the tree properties on successors in Section \ref{sec:con}. Let us take the consistency at, e.g., $\omega_2$ as given for the moment.}  
We will use for this example a compactness principle interesting in its own right.

\begin{definition}
If $\kappa$ is a regular cardinal, we say that $(T,<)$ is a \emph{weak $\kappa$-Kurepa tree} if $T$ is a tree of height $\kappa$ and size $\le \kappa$ which has at least $\kappa^+$-many cofinal branches. We say that the \emph{weak Kurepa hypothesis} holds at $\kappa$, denoted $\wKH(\kappa)$,  if and only if there is a weak $\kappa$-Kurepa tree.
\end{definition}

Cox and Krueger showed in \cite{CK:ind} that $\nonwKH(\omega_1)$ follows from $\ISPii$. Lambie-Hanson and Stejskalov{\'a} generalized this and some other results in \cite{arithmetic_paper} by proving them from a weaker principle (and simultaneously proving stronger results) related to cofinal branches in slender lists (see Section \ref{sec:con} for more details). We will illustrate their results by showing that $\SP(\omega_1,\omega_2,\omega_2)$ implies $\nonwKH(\omega_1)$:

\begin{lemma}\label{lm:sp}
$\SP(\omega_1,\omega_2,\omega_2)$ implies $\neg \wKH(\omega_1)$.
\end{lemma}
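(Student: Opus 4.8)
The plan is to argue by contraposition: assuming $\wKH(\omega_1)$, I would produce a configuration contradicting $\SP(\omega_1,\omega_2,\omega_2)$. First I would fix a weak $\omega_1$-Kurepa tree $T$; without loss of generality its underlying set is $\omega_1$, its height is $\omega_1$, $|T|\le\omega_1$, and it carries a family $\{b_\eta \mid \eta<\omega_2\}$ of distinct cofinal branches, each viewed as a subset $b_\eta\sub\omega_1\sub\omega_2$. The single feature that drives the whole argument is that a proper initial segment of any branch is determined by one node: for $\beta<\omega_1$ one has $b_\eta\cap\beta=\{s\in T \mid s<_T b_\eta(\beta)\}$, which is definable from $T$ and from the single node $b_\eta(\beta)\in T$.

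Rather than build an incompact list entirely by hand, I would route the argument through the standard reformulation of the slender tree property in terms of guessing models (Weiss \cite{Weiss:gentree}, Viale--Weiss \cite{VW:PFA}): $\SP(\omega_1,\omega_2,\omega_2)$ yields, for every sufficiently large $\theta$, some $M\el H(\theta)$ with $|M|=\omega_1$, $\omega_1\cup\{T\}\sub M$, $M\cap\omega_2\in\omega_2$, which \emph{cofinally guesses} every $\omega_1$-approximated subset of $\omega_2$ (i.e.\ for such $d$ there is $e\in M$ with $e\cap x=d\cap x$ for a cofinal set of $x\in M\cap\power_{\omega_2}(\omega_2)$). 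Since $|T|\le\omega_1$ and $\omega_1\sub M$, in fact $T\sub M$. The key step is then to check that every branch $b_\eta$ is $\omega_1$-approximated by $M$: for a countable $a\in M$ with $\sup a=\beta<\omega_1$ we have $b_\eta\cap a=(b_\eta\cap\beta)\cap a$, and $b_\eta\cap\beta$ is the node-definable set above, so it lies in $M$ (its defining node $b_\eta(\beta)$ belongs to $T\sub M$); hence $b_\eta\cap a\in M$. This is exactly the slenderness computation, now phrased as an approximation statement.

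The contradiction is a counting argument. By cofinal guessing, each $b_\eta$ admits $e_\eta\in M$ with $e_\eta\cap x=b_\eta\cap x$ on a cofinal set of $x$; since $\omega_1\sub M$, for each $\xi<\omega_1$ some such $x$ contains $\xi$, so $e_\eta\cap\omega_1=b_\eta$. As the $b_\eta$ are distinct, $\eta\mapsto e_\eta$ injects $\omega_2$ into $M$, contradicting $|M|=\omega_1$. I expect the main obstacle to be precisely the point that forces the detour through approximation rather than a direct incompact list: cofinal branches demand only \emph{bounded} agreement, and a weak Kurepa tree supplies arbitrarily many branches sharing any bounded initial segment, so a naive ``trace of a new branch'' coding is too cheap to defeat. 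The real content is that the guessing model is compelled to reflect all $\omega_2$ branches at once while having room for only $\omega_1$ of them, with node-determinacy of initial segments supplying the $\omega_1$-approximation that makes each branch subject to guessing.
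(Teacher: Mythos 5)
Your closing counting argument is sound, and your ``node-determinacy'' computation (initial segments of a branch are definable from $T$ and a single node) is exactly the slenderness verification in the paper's proof. The genuine gap is the step you call the ``standard reformulation'': from the \emph{local} hypothesis $\SP(\omega_1,\omega_2,\omega_2)$ you cannot, by any standard argument, extract a single $M \el H(\theta)$ of size $\omega_1$ that cofinally guesses \emph{every} $(\omega_1,M)$-approximated subset of $\omega_2$. The equivalences of Weiss and Viale--Weiss \cite{Weiss:gentree,VW:PFA} concern ineffable branches and full guessing ($\ISP$ versus $\GMP$), not cofinal branches and cofinal guessing, and, more importantly, they are equivalences between the \emph{global} principles (all $\lambda$ versus all $\theta$). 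The list-to-model direction is proved by coding a putative club of non-guessing models into a slender list, and that coding needs the list principle at some $\lambda \ge |H(\theta)|$; here any $\theta$ admitting $T \in M \el H(\theta)$ has $|H(\theta)| \ge 2^{\omega_1}$, which the hypothesis does not bound by $\omega_2$. The obstruction is concrete: if you try to set $D_x = d_{M_x}\cap x$ for chosen models $M_x$ with $M_x\cap\omega_2 = x$, slenderness fails, because a slenderness-witnessing model $N$ with $N\cap\omega_2 = x$ contains countable subsets of $\omega_2$ that are not elements of $M_x$, and the approximation property of $d_{M_x}$ says nothing about those. This asymmetry between $\lambda$ and $H(\theta)$ is precisely what the paper flags in Section~\ref{sec:gmp}; the model-theoretic characterization of $\SP$ that does exist---the parameterized principle $\wAGPY$ of \cite{arithmetic_paper}---was built to handle it, and your route would have to invoke that machinery (or strengthen the hypothesis to $\SP(\omega_1,\omega_2,\lambda)$ for sufficiently large $\lambda$), not Viale--Weiss.

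The paper's proof shows the detour is unnecessary, and also that the obstacle you cite against a direct construction (``cofinal branches demand only bounded agreement'') dissolves in this situation: since the branches $b_\eta$ are subsets of $\omega_1$, one defines the list nontrivially only on those $x \in \power_{\omega_2}(\omega_2)$ with $\omega_1 \sub x$, setting $d_x = b_{\gamma_x}$ where $\gamma_x$ is the least ordinal not in $x$ (and $d_x = \emptyset$ otherwise). Slenderness is your node argument. But now agreement of a putative cofinal branch $d$ with some $d_{z_x}$ on a set $x \super \omega_1$ is not ``bounded'' information at all: it pins down $d \cap \omega_1$ completely as a single branch $b_\gamma$, and repeating with $y \super \omega_1 \cup \{\gamma\}$ pins it down as a \emph{different} branch $b_\delta$. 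This two-branch collision replaces your count of $\omega_2$-many guesses inside a size-$\omega_1$ model, and it is exactly what makes the purely local hypothesis suffice.
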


\begin{proof}
Suppose for contradiction that $(T,<_T)$ is an $\omega_1$-Kurepa tree which we identify with a subset of $2^{<\omega_1}$. Let $\seq{b_\alpha}{\alpha<\omega_2}$ be an injective enumeration of $\omega_2$-many cofinal branches (we identify cofinal branches in $T$ with subsets of $\omega_1$). Let us define a slender list $D$ as follows: for every $x \in \power_{\omega_2}(\omega_2)$ such that $\omega_1 \sub x$, let $\gamma_x$ be the least ordinal below $\omega_2$ not in $x$ and set $d_x = b_{\gamma_x}$. For all $x$ with $\omega_1 \not \sub x$, set  $d_x = \emptyset$.

To show that $D$ is slender, we need to find a club of $M$ such that whenever $z \in \power_{\omega_1}(\omega_2) \cap M$, $d_{M \cap \omega_2} \cap z \in M$. This holds for all $M$ such that $\omega_1, T \sub M$ (and such $M$ clearly form a club): If $d_{M \cap \omega_2}  =\emptyset$, then we are done. If $d_{M \cap \omega_2} = b_\alpha$, for some $\alpha < \omega_2$, then $b_\alpha \cap z$ is included in some $t \in T$, $t \sub b_\alpha$, and can be defined in $M$ because both $z, t$ are in $M$.

Let $d$ be a cofinal branch. Fix any $x$ with $\omega_1 \sub x$.  There is $z_x \super x$ such that $$d \cap x = d_{z_x} \cap x.$$ Let $\gamma$ be the least ordinal not in $z_x$ so that $d_{z_x} = b_\gamma$. Now choose any $y \super \omega_1 \cup \{\gamma\}$. Then there is $z_y \super y$ such that 
$$d \cap y = d_{z_y} \cap y.$$ Since $\gamma \in z_y$, $d_{z_y}$ is a cofinal branch $b_\delta$ (where $\delta$ is the least ordinal not in $z_y$) distinct from $b_\gamma$. This a contradiction because $d \cap \omega_1$ is fixed and cannot be equal to both $b_\gamma$ and $b_\delta$.
\end{proof}

The $\omega_1$-slenderness is essential: $\ISP(\omega_2,\omega_2,\lambda)$ for all $\lambda \ge \omega_2$ is consistent with the existence of (non-wide) $\omega_1$-Kurepa trees (see \cite[Theorem 53]{ChS:guess}). Since $\ISP(\omega_2,\omega_2,\lambda)$ implies $\ITP(\omega_2,\lambda)$, which in turn implies $\TP_{\omega_2}$, none of the other principles implies $\neg \wKH(\omega_1)$ either.

\subsubsection{The Guessing Model Principle}\label{sec:gmp}

Suppose $\kappa \ge \omega_2$ is a regular cardinal. Viale and Weiss isolated in \cite{VW:PFA, Viale:Laver} a model-theoretic principle equivalent to $\ISP_\kappa$ and called it the \emph{Guessing Model Principle}, $\GMP_\kappa$. $\GMP_\kappa$ makes it possible to derive consequences of $\ISP_\kappa$ in a model-theoretic way, making the arguments similar to those using elementary embeddings.

In the interest of completeness we will review this principle because it has been increasingly used to derive consequences of $\ISP_\kappa$. See for instance Viale \cite{MR2959666} for the failure of squares, Krueger \cite{MR4021101} for $\SCH$, articles by Lambie-Hanson and Stejskalov{\'a} \cite{ChS:guess, arithmetic_paper} for applications related to Kurepa trees and combinatorics,  Honzik et al.\ \cite{HLS:gmp} for a proof using guessing models that $\ISPii$ is preserved by Cohen reals over all models of $\ISPii$ (in particular over models of $\PFA$), and Mohammadpour and Veli{\v c}kovi{\'c} \cite{MR4290492} who obtain $\ISP_{\omega_2}$ and $\ISP_{\omega_3}$ simultaneously as a consequence a variant of the guessing model principle (see the end of Section \ref{sec:col} for more information about global patterns of compactness principles).

\begin{definition}[$\GMP$]\label{guessing_model_def}
Let $\mu < \theta$ be uncountable cardinals, $\theta$ regular, and let $M \el H(\theta)$.
  \begin{compactenum}[(i)]
    \item Given a set $x \in M$, and a subset $d \subseteq x$, we say that 
    \begin{enumerate}
      \item $d$ is \emph{$(\mu, M)$-approximated} if, for every $z \in M \cap \power_{\mu}(M)$, 
      we have $d \cap z \in M$;
      \item $d$ is \emph{$M$-guessed} if there is $e \in M$ such that $d \cap M = e \cap M$.
    \end{enumerate}
    \item For $x \in M$, $M$ is a \emph{$\mu$-guessing model for $x$} if
    every $(\mu, M)$-approximated subset of $x$ is $M$-guessed.
    \item $M$ is a \emph{$\mu$-guessing model} if, for every $x \in M$, it is a $\mu$-guessing 
    model for $x$.
  \end{compactenum}
  Let $\mu \le \kappa\le\theta$ be uncountable cardinals with $\kappa$ and $\lambda$ regular. We denote by $\GMP(\mu, \kappa,\theta)$ the assertion that the set of $M \in \power_\kappa (H(\theta))$ such that $M$ is a 
  $\mu$-guessing model is stationary in $\power_\kappa (H(\theta))$. 

In keeping with our notation for $\ISPgen{\kappa}$, we will write $\GMPgen{\kappa}$ if $\GMP(\omega_1,\kappa,\theta)$ holds for all regular $\theta\ge \kappa$.
\end{definition}

Viale and Weiss proved in \cite{VW:PFA} that $\GMPii$ and $\ISPii$ are equivalent and are consequences of $\PFA$. Lambie-Hanson and Stejskalov{\'a} explicitly proved the generalization that \begin{equation} \forall \lambda \ge \kappa \; \ISP(\mu,\kappa,\lambda) \Iff \forall \theta \ge \kappa \mbox{ regular} \; \GMP(\mu,\kappa,\theta) \end{equation} for all regular uncountable cardinals $\mu\le \kappa$ (see \cite[Corollary 14]{ChS:guess}).\footnote{It is possible to show a local equivalence of these principles but there is a certain asymmetry due to the fact that one of the principles refers to cardinals $\lambda$ and the other one to $H(\theta)$  (see Corollary 14 and the paragraph below it in \cite{ChS:guess}).}

In order to illustrate the use of guessing models with $\mu >\omega_1$ we prove the tree property $\TP(\omega_2)$, i.e.\ that there are no $\omega_2$-Aronszajn trees.

\begin{lemma}\label{lm:tp}
$\GMP(\omega_2,\omega_2,\omega_3)$ implies $\TP(\omega_2)$.
\end{lemma}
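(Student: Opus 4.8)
The plan is to argue by contradiction: fix an $\omega_2$-tree $T$ with no cofinal branch and use a single $\omega_2$-guessing model to manufacture one. First I would pass to a guessing model $M \el H(\omega_3)$ provided by $\GMP(\omega_2,\omega_2,\omega_3)$ with the extra features $T \in M$, $\omega_1 \sub M$ and $|M| = \omega_1$; these features cut out a club in $\power_{\omega_2}(H(\omega_3))$, so they are compatible with the stationarily many guessing models. Set $\delta = \sup(M \cap \omega_2)$, a limit ordinal below $\omega_2$ with $\delta \notin M$ (if $\delta \in M$ then $\delta \in M \cap \omega_2$, contradicting $\delta = \sup(M \cap \omega_2)$).

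The key preliminary observation is that every level $T_\gamma$ with $\gamma \in M \cap \omega_2$ is in fact contained in $M$: since $|T_\gamma| \le \omega_1$ and $T_\gamma \in M$, elementarity gives a surjection $\omega_1 \to T_\gamma$ in $M$, and $\omega_1 \sub M$ then forces $T_\gamma \sub M$. Consequently, picking any node $t$ on level $\delta$ and letting $b = \{s \in T : s <_T t\}$ be the branch of its predecessors, the predecessor $t_\gamma$ of $t$ on level $\gamma$ lies in $M$ exactly when $\gamma \in M$, so $b \cap M = \{t_\gamma : \gamma \in M \cap \delta\}$, a chain cofinal in $\delta$.

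Next I would show that $b$, as a subset of $x = T \in M$, is $(\omega_2, M)$-approximated. Given $z \in M \cap \power_{\omega_2}(M)$ --- which for $\mu = \omega_2$ means any $z \in M$ with $z \sub M$ --- I may assume $z \sub T$. The crucial point is that the set of heights $S = \{\mathrm{ht}(s) : s \in z\}$ lies in $M$ and is contained in $M \cap \omega_2 \sub \delta$; hence $\sup S \in M$, and since $\delta \notin M$ we must have $\sup S < \delta$. Thus $z$ meets the branch only below the level $\beta = \sup S + 1 \in M \cap \delta$, and one checks $b \cap z = \{s \in z : s <_T t_\beta\}$, which is definable from the parameters $z, t_\beta \in M$ and so belongs to $M$. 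This is the heart of the argument and the step I expect to be the main obstacle: a priori one fears a $z \in M$ of size $\omega_1$ whose nodes climb cofinally through $\delta$ along $b$, which would wreck approximation, but elementarity forbids this precisely because such a $z$ would put $\sup S = \delta$ into $M$.

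With approximation in hand, the guessing property yields $e \in M$ with $e \cap M = b \cap M$; replacing $e$ by $e \cap T$ I may take $e \sub T$. Since $e \cap M = b \cap M$ is a chain, elementarity shows $e$ itself is a chain (any incomparable pair in $e$ could be found inside $M$, i.e.\ inside the chain $b \cap M$). Finally $\sup\{\mathrm{ht}(s) : s \in e\} \in M$ is $\ge \delta$ because $e \cap M$ already reaches cofinally into $\delta$; as no ordinal of $M \cap \omega_2$ is $\ge \delta$, this supremum must equal $\omega_2$. Hence $e$ is a chain of unbounded height, and its downward closure is a cofinal branch of $T$ --- contradicting our choice of $T$. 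As $T$ was arbitrary, $\TP(\omega_2)$ follows.
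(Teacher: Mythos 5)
Your proof is correct and follows essentially the same route as the paper's: a single $\omega_2$-guessing model $M$ of size $\omega_1$ with $T \in M$, the key observation that any $z \in M \cap \power_{\omega_2}(M)$ has heights bounded below $\delta = \sup(M \cap \omega_2)$ (the paper phrases this as boundedness of sets in $M \cap \power_{\omega_2}(\omega_2)$, arguing via the cofinality of $\omega_2$ in $M$), then approximation and guessing of the predecessor set of a node at level $\delta$, and elementarity to see that the guessed set yields a cofinal branch; you merely fill in details the paper leaves terse. The only microscopic slip is your parenthetical argument that $\delta \notin M$: having $\delta \in M \cap \omega_2$ does not by itself contradict $\delta = \sup(M \cap \omega_2)$, since a supremum can be attained; instead observe that $M \cap \omega_2$ is closed under ordinal successor, so its supremum is never a member.
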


\begin{proof}
Suppose $M\el H(\omega_3)$ is an $\omega_2$-guessing model of size $\omega_1$ and $T \in M$. We can assume $\delta = M \cap \omega_2$ is a limit ordinal greater than $\omega_1$ (because the set of all such $M$ is a club). To argue for $\TP(\omega_2)$, it suffices to notice that if $z \in M \cap \power_{\omega_2}(\omega_2)$, then $z$ must be bounded below $\delta$: Otherwise there would be in $M$ a bijection from $\omega_1$ onto $z$ cofinal in $\delta$ (by elementarity, if $z \in M$ and $\omega_1 \in M$, then there must be some $f: \omega_1 \to z$ in $M$ as well). This would mean that $M$ thinks that $\omega_2$ has cofinality $\omega_1$, which is impossible. Let $t$ be any node in $T$ on level $\delta$, and $d$ the set of its $T$-predecessors. By what we said above, $d$ is $(\omega_2,M)$-approximated and there must be some $e \in M$ such that $e \cap M = d \cap M$. By elementarity $e$ is a cofinal branch in $T$.
\end{proof}

\subsubsection{Consistency and consequences}\label{sec:con}

In Lemmas \ref{lm:sp} and \ref{lm:tp}, we illustrated the use of compactness related to trees and lists at $\omega_2$. Let us say a few words here regarding the forcings which show the consistency of  such principles at $\omega_2$, or more generally at double successors of regular cardinals. See Section \ref{sec:canonical} for more details on  models of compactness principles.

Compactness at $\omega_2$ is historically the most researched case due to the connections with $\PFA$. It is also sufficiently representative for the class of double successors of regular cardinals which tend to behave the same way with regard to compactness principles.\footnote{\label{R1} There are known exceptions to this heuristics: (i) an important exception is the countable-support iteration of proper forcings of a supercompact length which forces $\PFA$. Since a generalization of properness to higher cardinals is missing, the case of $\omega_2$ is special in the context of forcing axioms, and (ii) the consistency strength of the Suslin Hypothesis at $\kappa^{++}$ for $2^{<\kappa} = \kappa$ may be different for $\kappa = \omega$ and $\kappa > \omega$ according to the known results (see Footnote \ref{ft:SH} for more details).}  The first result in this direction was obtained by Mitchell \cite{MIT:tree} who showed that $\TP(\omega_2)$, and in general $\TP(\kappa^{++})$ for a regular $\kappa$, is consistent from a weakly compact cardinal. Mitchell defined a forcing notion in \cite{MIT:tree} which turns a large cardinal $\lambda$ into a double successor $\kappa^{++}$ of a regular cardinal $\kappa<\lambda$ and which has since become a standard tool for obtaining compactness at double successors of regular cardinals. The terms ``Mitchell forcing'' (or ``Mitchell collapse'') and ``Mitchell model'' are used in the literature to denote various variants and generalizations of the original Mitchell forcing and the associated generic models. See Abraham \cite{ABR:tree} for a detailed exposition of the classical Mitchell forcing and Krueger \cite{KR:DSS} for a description based on the concept of a mixed-support iteration. Later on,  other forcings were used to produce models with compactness at small cardinals, for instance the iteration of the usual Sacks forcing at $\omega$ of weakly compact length produces a model with $\TP(\omega_2)$, see Kanamori \cite{KANAMORIperfect}; this result can be  generalized to other forcings with fusion, see Stejskalov{\'a} \cite{S:g} and Honzik and Verner \cite{HV:grig} for Grigorieff forcing or Friedman et al.\ \cite{FHZ:fusion} for a more general set-up.

In addition to the tree property, the same forcings yield compactness principles like stationary reflection, failure of the approachability property (see Unger \cite{U:ap}), the negation of the weak Kurepa Hypothesis (see Honzik and Stejskalov{\'a} \cite{HS:ureg} for a detailed proof) and the strong tree properties (Weiss \cite{Weiss:gentree}, Viale and Weiss \cite{VW:PFA}, and Fontanella \cite{Font:tree}). We note that the principle $\SPgen{\omega_2}$ is not fully understood yet:  while \cite{Weiss:gentree} claims that strongly compact cardinals suffice to have $\SP_{\omega_2}$ in the Mitchell model, no details are given for the supposed proof, and in fact it is open whether even the weaker principle $\SP(\omega_2,\omega_2,\lambda)$ holds in the Mitchell model (see Lambie-Hanson and Stejskalov{\'a} \cite{ChS:guess} who formulate a weakening of $\SP_{\omega_2}$ and prove that it holds in the Mitchell model starting with a strongly compact cardinal).

More complicated forcings are required for the tree property to hold at the successor or double successor of a singular cardinal. See Section \ref{sec:canonical} for some more details for these cases.

Let us summarize the known consequences of the tree properties and state some related open questions. We will divide the consequences into three types:

\medskip

\textbf{Other compactness principles and $\AD^{L(\R)}$.}

\begin{itemize}
\item Weiss proved in \cite{Weiss:gentree} that $\ITP_\kappa$ implies $\neg \square(E^\lambda_{<\kappa},\kappa)$, where $E^\lambda_{<\kappa}$ is the set of ordinals below $\lambda$ of cofinality $<\kappa$, for all $\lambda$ with $\cf(\lambda) \ge \kappa$.\footnote{\label{R2} Weiss \cite{Weiss:gentree} uses a different notation. We use a more common notation $\square_{\lambda}(E,\kappa)$, where the first parameter in the brackets is the domain of the principle, the second one is the width and the subscript is an (optional) order-type restriction (the sequences are required to have length $\le \lambda$). The original Jensen's notation $\square_\lambda$ is thus equal to $\square_\lambda(\lambda^+,1)$; more generally, Schimmerling's weak square $\square_{\lambda,\kappa}$ is $\square_\lambda(\lambda^+,\kappa)$, the weak square $\square^*_\lambda$ is $\square_{\lambda}(\lambda^+,\lambda)$, and Todor{\v c}evi{\'c}'s $\square(\lambda)$ is $\square(\lambda,1)$. See Section \ref{sec:sq} for the definition of $\square(\lambda)$.} In particular, if $\kappa \le \lambda$ and $\lambda$ is a singular strong limit cardinal, then $\ITP_\kappa$ implies $\neg \square(E^{\lambda^+}_{<\kappa},\kappa)$, and hence $\neg\square_{\lambda}$. By Steel's theorem \cite[Theorem 0.1]{MR2194247} that the failure of a square at a strong limit singular cardinal implies $\AD^{L(\R)}$,  $\ITP_\kappa$ for any $\kappa \ge \omega_2$ entails $\AD^{L(\R)}$. It follows that Projective Determinacy and  regularity properties of the reals like Lebesgue measurability of definable subsets (all consequences of $\AD^{(L(\R))}$) follow not only from $\PFA$, but in fact from a ``compactness'' part of $\PFA$, specifically  $\ITP_{\omega_2}$. In fact, the failure of (weak) squares is often considered a compactness principle in itself because it follows from sufficiently large cardinals, and also from $\ITP_\kappa$ as we saw, or  from some forms of simultaneous stationary reflection.\footnote{\label{R3} Failures of squares are not the main focus of this article, but we will squeeze in some comments here. It is know that the weak square $\square^*_\lambda$ is equivalent to the existence of a special $\lambda^+$-Aronszajn tree for any infinite $\lambda$; in fact this characterization can be extended to all regular (not only successor) cardinals, see Krueger \cite{JohnKrueger2013}. The consistency strength of the failure of  square $\square_\lambda$ for regular $\lambda$ is equivalent to a Mahlo cardinal. The failure of $\square_\lambda$ for a strong limit singular cardinal has a much higher consistency strength since it implies $\AD^{L(\R)}$ as we mentioned above.  See Hayut \cite[Corollary 7]{MR3959249} for equivalences between chromatic compactness of graphs, failures of squares and simultaneous stationary reflection (this article is related to large cardinals), Lambie-Hanson et al.\ \cite{paper64} for connections between squares, forcing axioms and indecomposable ultrafilters and Sakai \cite{MR3017275} for connections between Chang's Conjecture and squares. It is worth mentioning that simultaneous stationary reflection is compatible with a weak instance of square $\square_\lambda(\lambda^+,\cf(\lambda))$, see Cummings et al. \cite[Theorem 10.1]{cfm}. This contrasts with indestructible forms of stationary reflection which kill the weakest square  $\square_\lambda(\lambda^+,\lambda)$, see Fuchs and Rinot \cite{paper33} (see also Section \ref{sec:sr}) for more details about simultaneous stationary reflection.}
\item If $\kappa$ is regular, then $\ISP_{\kappa^+}$ implies the failure of the approachability property, $\neg \AP(\kappa^+)$. Note that $\ITP_{\kappa^+}$ is not sufficient for proving $\neg \AP(\kappa^+)$ (folklore and Cummings et al.\ \cite{8fold}).
\item Cox and Krueger showed in \cite{CK:ind} that $\ISP_{\kappa^+}$ implies $\nonwKH(\kappa)$. Their result for $\nonwKH(\kappa)$ was improved by  Lambie-Hanson and Stejskalov{\'a} in \cite{arithmetic_paper} who  developed a model-theoretic characterization of the principle $\SP$ called $\wAGPY$ (with several parameters) and showed that with appropriate parameters implies $\nonwKH(\kappa)$. See Lemma \ref{lm:sp} which illustrates this result by showing that $\SP(\omega_1,\omega_2,\omega_2)$ implies $\nonwKH(\omega_1)$. Note that $\ITP_{\kappa^+}$ does not suffice for $\nonwKH(\kappa)$, the notion of slenderness is essential here: see the paragraph below Lemma \ref{lm:sp} for more details.

\item Tree properties are in general independent from stationary reflection. See Cummings et al.\ \cite{8fold} who show that $\SR(\omega_2)$ is independent from $\TP(\omega_2)$ and $\neg \AP(\omega_2)$ (with all eight possibilities consistent). It is also known that $\PFA$ does not imply $\SR(\omega_2)$ by Beaudoin \cite{MR1099782}, so \emph{a fortiori}, $\ISPii$ does not imply $\SR(\omega_2)$.  In fact, at some successors of singulars the tree properties and stationary reflection may be incompatible: Magidor conjectured that the tree property $\TP(\aleph_{\omega+1})$ together with $\aleph_\omega$ strong limit implies $\SCH$ at $\aleph_\omega$ and $\neg \SR(\aleph_{\omega+1})$.\footnote{\label{R4} The case of $\aleph_{\omega+1}$ with $\SCH$ at $\aleph_\omega$ is very specific. For instance $\SR(\aleph_{\omega+1})$ implies the incompactness principle $\AP(\aleph_{\omega+1})$ by Shelah \cite{Sh:108} (for a proof see \cite[Corollary 3.41]{MR2768694} where this principle is denoted $\AP_{\aleph_\omega}$), but does not imply $\SCH$ at $\aleph_\omega$, see Poveda et al.\ \cite{paper43} and Ben-Neria et al.\ \cite{MR4725661}. Note that the combination of the failure of $\SCH$ at $\aleph_\omega$ with $\SR(\aleph_{\omega+1})$ is optimal because stationary reflection for subsets of $[\aleph_{\omega+1}]^\omega$ already implies $\SCH$ at $\aleph_\omega$ by Shelah \cite{MR2369124}.} This lack of connection, and perhaps even incompatibility, between stationary reflection and the tree properties explains the lack of consequences of logical principles for compactness of mathematical structures (see Remark \ref{rm:lack} and Section \ref{sec:uni} for more discussion).
\end{itemize}

\textbf{Cardinal arithmetics} 

\begin{itemize}
\item For all $\kappa \ge \omega_2$, $\ISP_\kappa$ implies $\SCH$ above $\kappa$, by Krueger \cite{MR4021101}. Lambie-Hanson and Stejskalov{\'a} improved this result by showing that already the principle $\wAGPY$ at $\kappa$  with appropriate parameters implies Shelah's Strong Hypothesis above $\kappa$, which is known to imply $\SCH$.\footnote{\label{R5} It is worth observing that Shelah’s strong hypothesis is by itself a reflection principle. It is equivalent to the assertion that for every regular cardinal $\kappa$, for every first countable space $X$ of density $\kappa$, if $|X|>\kappa$, then some separable subspace $Y$ of $X$ satisfies $|Y|>\kappa$, see Rinot \cite{paper05}. } Hence having $\ISPii$ (in fact $\wAGPY$ at $\omega_2$) limits the extent of compactness in the universe, forbidding for instance the global principle that the tree property holds on every regular cardinal (to have this there would need to be strong limit cardinals $\kappa$ with $\TP(\kappa^{++})$ which implies the failure of $\SCH$ at $\kappa$).

\item Cummings at al.\ show in \cite{MR4127897}  that $\ITP_{\kappa^+}$ is consistent with the failure of $\SCH$ at a strong limit $\kappa$ ($\kappa$ can be equal to $\aleph_{\omega^2}$). So the principle at $\kappa$ does not enforce $\SCH$ below $\kappa$. However, it is open whether $\ITP_{\kappa}$ or even $\TP_\kappa$ implies $\SCH$ above $\kappa$. It is stated as plausible in \cite{MR4127897} that $\ITP_{\kappa^+}$ and $\ITP_{\kappa^{++}}$ with $\kappa = \aleph_{\omega^2}$ can hold simultaneously, extending the known results for $\TP(\kappa^+)$ and $\TP(\kappa^{++})$ in Sinapova and Unger \cite{SU:TP}.

\item Suppose $\ISP_{\kappa^{++}}$ holds, with $\kappa$ being a regular cardinal. Then $\ISP_{\kappa^{++}}$ implies  $2^\kappa > \kappa^+$, but places no additional requirements on the value of $2^\kappa$. In particular the consequence $2^\omega = \omega_2$ of $\PFA$ is not retained by $\ISPii$. This was first proved by Cox and Krueger \cite{MR3471139}, and it is also a corollary of an indestructibility theorem by Honzik et al.\ \cite{HLS:gmp}. 

\item However, $\ISPii$ does put some restrictions on the value of $2^{\omega_1}$; in fact already the principle $\nonwKH(\omega_1)$ does. More generally, Lambie-Hanson and Stejskalov{\'a} show in \cite{arithmetic_paper} that if $\kappa$ is regular uncountable, $\nonwKH(\kappa)$ and $2^{<\kappa} <\kappa^{+\kappa}$, then $2^\kappa = 2^{<\kappa}$. In particular if $\nonwKH(\omega_1)$ and $2^\omega <\aleph_{\omega_1}$, then $2^{\omega_1} = 2^{\omega}$. However, relative to the existence of a supercompact cardinal, $\nonwKH(\omega_1)$ is consistent with $2^\omega = \aleph_{\omega_1}$ and $2^{\omega_1} > \aleph_{\omega_1+1}$. \end{itemize}

\textbf{Cardinal invariants of the continuum}

\begin{itemize}
\item There is an ``indestructible'' strengthening of $\ISPii$, called the \emph{indestructible guessing model property}, $\IGMP$, introduced by Cox and Krueger in \cite{MR3691206}. They show that it follows from $\PFA$ and is non-trivially stronger than $\ISPii$, for instance it implies the Suslin Hypothesis. They show that $\IGMP$ does not put any bound on the value of $2^\omega$, but they left open the question whether $\cf(2^\omega) = \omega_1$ is consistent with $\IGMP$. This open question was partially answered by Lambie-Hanson and Stejskalov{\'a} in \cite{arithmetic_paper} who showed that in a generic extension by a measure algebra, an indestructible version of $\nonwKH(\omega_1)$ holds and $\cf(2^\omega) = \omega_1$. 

\item Provable consequences of the tree properties at $\omega_2$ for cardinal invariants are not known (except that they imply $2^\omega > \omega_1$, thus making the structure of cardinal invariants non-trivial, at least in principle). For the tree property $\TP(\omega_2)$ and $\nonwKH(\omega_1)$, Honzik and Stejskalov{\'a} showed in \cite{HS:ureg} that various patterns of cardinal invariants are consistent with the principles $\TP(\omega_2), \nonwKH(\omega_1)$ and $\SR(\omega_2)$. A configuration which is left open in \cite{HS:ureg} is whether these principles are consistent with $\omega_1 < \mathfrak{t} = \mathfrak{u} < 2^\omega$. Note in this connection that Cox and Krueger asked in \cite{MR3691206}  whether $\IGMP$ implies  $\mathfrak{t} > \omega_1$. This was answered negatively by Lambie-Hanson and Stejskalov{\'a} in \cite{arithmetic_paper}.

\item Mohammadpour and Veli{\v c}kovi{\'c} introduced in \cite{MR4290492} a two-cardinal strengthening of $\ISPii$ called $\GM^+(\omega_3,\omega_1)$ which is consistent modulo two supercompact cardinals and implies $\ISPii$ and $\ISP_{\omega_3}$, the failure of the weak square principle $\square(\kappa, \omega_2)$ for all $\kappa \ge \omega_2$, and the fact that the restriction of the approachability ideal $I[\omega_2]$ to $\omega_2 \cap \mx{cof}(\omega_1)$ is the non-stationary ideal (which is the strongest possible failure of the approachability  property at $\omega_2$).\footnote{\label{R13}This was first shown consistent by Mitchell in \cite{MR2452816} starting with a greatly Mahlo cardinal. Mitchell's result solved a long-standing open problem of Shelah and introduced several important forcing methods (finite conditions with side conditions for adding clubs in $\omega_2$, strongly generic conditions and strongly proper forcings). See Gilton and Krueger \cite{MR3620064} for more comments and generalizations of Mitchell's proof.} In \cite{MOHAMMADPOUR_VELIČKOVIĆ_2025} they consider an indestructible version of $\GM^+(\omega_3,\omega_1)$, which they call $\SGM^+(\omega_3,\omega_1)$, in an analogy with $\IGMP$ discussed above.

\end{itemize}

\brm \label{rm:lack}
The tree properties at successors of regulars do not in general imply compactness principles for graphs or algebras which we discuss here. For example,  $\ZFC$ proves that abelian compactness (Section \ref{sec:ab}), chromatic compactness of graphs (Section \ref{sec:graphs}) or full stationary reflection all fail at $\omega_2$ (Sections \ref{sec:sr} and \ref{sec:delta}) and in fact, everywhere below $\aleph_\omega$\footnote{With high probability everywhere below $\aleph_{\omega^2}$, but not higher. The $\Delta$-reflection which we discuss in Section \ref{sec:math} can consistently hold at $\aleph_{\omega^2+1}$ and it implies several compactness principles for mathematical structures.}, even though strong tree properties can consistently hold below $\aleph_\omega$. The main reason why tree properties at successors of regulars have limited effect on the compactness of mathematical structures is the existence of non-reflecting stationary sets: they suffice by an inductive argument to construct various incompact mathematical objects, but---importantly---do not suffice to construct thin lists without cofinal branches. At larger cardinals, where compactness of graphs or groups may consistently hold, independence of  mathematical compactness from the tree properties is shown by direct arguments (see for instance Fontanella and Magidor \cite{MF:TPSR} and Fontanella and Hayut \cite{FH:delta}). See also Remark \ref{rm:dtp}.
\erm

\section{Compactness in mathematics}\label{sec:math}

There are natural compactness principles for mathematical structures such as algebras, graphs or topological spaces which assert that certain properties holding in all substructures of size $<\kappa$ necessarily hold in the whole structure of size $\kappa$. Many principles---and all which we will discuss in some detail---can be expressed in the infinitary logic $L_{\kappa,\kappa}$ and hence are true if $\kappa$ is weakly compact. An interesting question is whether the converse holds as well, i.e., whether validity of these  principles for structures of size $\kappa$ already implies that $\kappa$ is weakly compact. While this is the case in $V = L$ (see Theorem \ref{th:full} for all examples we will discuss),\footnote{In this article, the initial formulations of compactness principles at $\kappa$ are usually stated for structures of size $\kappa$. This makes it possible to compare them in $V = L$, where they all turn out to be equivalent to weak compactness of $\kappa$. It is natural to allow unrestricted forms of these principles in which the size of the structures is unlimited, and which correspond to strong compactness of $\kappa$. However, since there are no  models for strongly compact cardinals like $L$ (core models), it is unclear whether there is a single universe where all these unrestricted principles are equivalent.} it is good news---from the perspective of this article---that it is consistent modulo large cardinals that some  principles can hold on small cardinals as well.  

A necessary  condition for compactness of many  mathematical structures of size $\kappa$ is stationary reflection for certain subsets of $\kappa$. For instance, Eklof, Gregory and Shelah (independently) showed that if $\kappa < \lambda $ are regular cardinals and there is a non-free almost-free abelian group of size $\kappa$ and there is a non-reflecting stationary subset $S \sub \lambda \cap \cof(\kappa)$, then there is a non-free almost free abelian group of size $\lambda$ (a ``pump-up lemma'') (see \cite[Theorem 2.3]{modules}). Similarly, Shelah showed \cite[Claim 1.2]{Shelah:inc} that if there is a non-reflecting subset of $\kappa^{++} \cap \cof(\kappa)$, then there is a graph $G$ of size $(\kappa^{++})^\kappa$ with chromatic number $>\kappa$ with all small subgraphs having chromatic number $\le \kappa$. These conditions are often non-trivial, and the research into compactness often leads to new $\ZFC$-theorems for mathematical structures (such as Lemma \ref{lm:MS} for the provable existence of almost-free non-free abelian groups which extends the ``pump-up lemma'').

In order to understand compactness at small cardinals better, it is worth studying whether stationary reflection is also sufficient for compactness, i.e.,whether incompact structures (of the given type) necessarily arise out of non-reflecting stationary sets. As we will review in Section \ref{sec:L}, the answer is \emph{yes} in G{\"o}del's constructible universe  $L$: in fact, if $V = L$, stationary reflection just for the subsets of $E^\kappa_\omega$ (the set of ordinals $<\kappa$ with countable cofinality) is sufficient (since it is equivalent to weak compactness of $\kappa$). If $V \neq L$ (and modulo large cardinals), the usual reflection principle for stationary subsets of ordinals is relatively weak. However, under large cardinal assumptions, there are powerful strengthenings of stationary reflection with mathematical consequences at small successor cardinals: We will review in some detail the $\Delta$-reflection introduced by Magidor and Shelah \cite{MS:free} for compactness at successors of singulars, the (Weak) Reflection Principle, $\mathsf{(W)RP}$, introduced by Foreman et al.\ \cite{FMS:MM}, and the Fodor-type Reflection Principle, $\FRP$, introduced by Fuchino et al.\ \cite{MR2610450}, which can hold already at $\omega_2$.

\brm \label{rm:dtp}
Stationary reflection and logical principles related to trees, which we reviewed in the previous section, are logically independent because all known forms of stationary reflection at $\omega_2$ are consistent with $\CH$, while $\TP(\omega_2)$ implies $\neg \CH$. However, assuming a weak fragment of $\MA$,  some forms of stationary reflection do imply $\TP(\omega_2)$ (see Section \ref{sec:gsr} for more details), and assuming $2^\omega = \omega_2$, Strong Chang's Conjecture, which is a consequence of Rado's Conjecture, implies $\TP_{\omega_2}$, which hints at a possibility of unifying the logical and mathematical principles (see Section \ref{sec:uni} for details and references). At successors of singulars, the principles are independent as well: Fontanella and Magidor showed in \cite{MF:TPSR} that  $\TP(\aleph_{\omega^2+1})$ (and also $\neg \AP(\aleph_{\omega^2+1})$) is independent from  $\Delta_{\aleph_{\omega^2},\aleph_{\omega^2+1}}$ (see Section \ref{sec:delta} for details on $\Delta$-reflection).
\erm

\subsection{Equivalents of weak compactness in $L$}\label{sec:L}

If $V = L$,  many concepts of compactness at $\kappa$ provide the full characterization of weak compactness and hence imply inaccessibility of $\kappa$.  While we are not interested in the axiom $V = L$ \emph{per se}, the analysis of compactness under $V = L$ helps to clarify the interdependencies between the notions and isolates concepts which prevent compactness to occur at small cardinals. For this reason, we first summarize in Theorem \ref{th:full}  characterizations of weak compactness in $L$ which are expressed in the language of various mathematical structures.

As we will see in the next sections, some of these principles are consistent at small cardinals, but not necessarily equivalent.

\begin{theorem}\label{th:full} Suppose $V = L$. Then the following are equivalent for all regular uncountable cardinals:
\bce[(i)]
\item \emph{Logic and trees (Section \ref{sec:unctble}).}

\begin{itemize}
\item $\kappa$ is weakly compact for the infinitary logic $L_{\kappa,\kappa}$ with signature of size $\kappa$.
\item There are no $\kappa$-Aronszajn trees.\footnote{This follows from the stronger result that in $V =L$, there are no $\kappa$-Suslin trees if and only if $\kappa$ is weakly compact (Theorem \ref{th:Ls}).}
\end{itemize}

\item \emph{Squares and stationary reflection (Section \ref{sec:sr}).}

\begin{itemize}
\item All stationary subsets of $E^\kappa_\omega = \set{\alpha < \kappa}{\cf(\alpha) = \omega}$ reflect. See Theorem \ref{th:E}.
\item Stationary reflection $\SR(\kappa)$ holds. 
\item Todor{\v c}evi{\'c}'s square $\square(\kappa)$ does not hold. See Theorem \ref{th:sq}.
\item Fodor-type Reflection principle $\FRP(\kappa)$ holds. See Theorem \ref{th:FRP}.
\item $\kappa$ is $\Delta_{<\kappa,\kappa}$-compact for $\kappa$-sized algebras. See Theorem \ref{th:Ldelta}.
\end{itemize}

\item \emph{Algebras (Section \ref{sec:ab}).}

\begin{itemize}
\item  $\kappa$ is abelian compact for $\kappa$-sized abelian groups. See Theorem \ref{th:Lab}.
\end{itemize}

\item \emph{Graphs (Section \ref{sec:gr}).}

\begin{itemize}
\item $\kappa$ is countably coloring compact for $\kappa$-sized graphs. See Theorem \ref{th:Lcolor}. 
\item \label{full:chrom} $\kappa$ is countably chromatically compact for $\kappa$-sized graphs. See Theorem \ref{th:Lg}.
\item \label{full:RC} Rado's Conjecture for graphs of size $\kappa$ holds. See Theorem \ref{th:RC}.
\item There are no $\kappa$-Suslin trees. See Theorem \ref{th:Ls}.
\end{itemize}

\item \emph{Topological spaces (Section \ref{sec:top}).}

\begin{itemize}
\item \label{full:H}$\kappa$ is collectionwise Hausdorff compact for $\kappa$-sized topological spaces. See Theorem \ref{th:Haus}.
\end{itemize}

\ece
\end{theorem}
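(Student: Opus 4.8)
The plan is to treat weak compactness as a hub and establish two families of implications: that weak compactness implies each listed principle (the \emph{downward} spokes), and that in $L$ each listed principle implies weak compactness (the \emph{upward} spokes). The downward direction is uniform and does not use $V=L$. Each principle has the shape ``a property verified on all substructures of size $<\kappa$ holds of the whole structure of size $\kappa$,'' and in every case the structure together with the offending property can be coded by a theory of size $\kappa$ in $L_{\kappa,\kappa}$ whose models are precisely the witnesses to incompactness. Thus $L_{\kappa,\kappa}$-compactness rules out every incompact structure at once: the tree-theoretic lines reduce to the Erd{\H o}s--Tarski characterization, while stationary reflection, $\neg\square(\kappa)$, and $\FRP(\kappa)$ follow from the $\Pi^1_1$-indescribability entailed by weak compactness.

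For the upward direction I would argue the contrapositive inside $L$. First I would dispose of the successor case: if $\kappa=\mu^+$, then $\square_\mu$ holds in $L$ (Jensen), yielding a special $\kappa$-Aronszajn tree and a nonreflecting stationary subset of $\kappa$, which already refute every principle on the list. Since in $L$ every regular uncountable limit cardinal is inaccessible (by $\GCH$), the only remaining case is $\kappa$ inaccessible but not weakly compact. The engine here is the fine-structural fact that such a $\kappa$ carries a $\square(\kappa)$-sequence in $L$; from it Todor{\v c}evi{\'c}'s construction produces a $\kappa$-Aronszajn tree, and $\diamondsuit_\kappa$ upgrades this to a $\kappa$-Suslin tree, settling the tree and square lines. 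The nonreflecting stationary sets obtained alongside $\square(\kappa)$ then feed the standard ``pump-up'' constructions: an almost-free non-free abelian group of size $\kappa$ (Eklof--Gregory--Shelah), a graph of size $\kappa$ with uncountable chromatic or coloring number all of whose small subgraphs are countably chromatic (Shelah), an incompact algebra witnessing failure of $\Delta_{<\kappa,\kappa}$-compactness, and a non-collectionwise-Hausdorff space of size $\kappa$ all of whose small subspaces are collectionwise Hausdorff.

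The main obstacle is that the upward direction is \emph{not} a single uniform argument: each kind of structure (tree, group, graph, algebra, topological space) requires its own construction converting the $L$-combinatorics into an explicit incompact object, and these belong to distinct literatures. The organizing principle I would emphasize is that in $L$ they all funnel through one dichotomy --- either $\kappa$ is weakly compact, or it carries $\square(\kappa)$ (and hence nonreflecting stationary sets) together with $\diamondsuit$ --- so the work reduces to the individual ``incompactness from a $\square$-sequence plus $\diamondsuit$'' lemmas recorded in the cited Theorems \ref{th:E}, \ref{th:sq}, \ref{th:FRP}, \ref{th:Ldelta}, \ref{th:Lab}, \ref{th:Lcolor}, \ref{th:Lg}, \ref{th:RC}, \ref{th:Ls}, and \ref{th:Haus}. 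Assembling those equivalences around the weak-compactness hub then closes the cycle and yields the stated mutual equivalence.
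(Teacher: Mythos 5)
Your proposal is correct and is essentially the paper's own argument: the paper proves Theorem \ref{th:full} precisely by aggregating the cited component theorems (\ref{th:E}, \ref{th:sq}, \ref{th:FRP}, \ref{th:Ldelta}, \ref{th:Lab}, \ref{th:Lcolor}, \ref{th:Lg}, \ref{th:RC}, \ref{th:Ls}, \ref{th:Haus}), each organized around weak compactness as a hub, with the downward direction standard (not using $V=L$) and the upward direction funneled through Jensen's dichotomy in $L$ --- a non-reflecting stationary subset of $E^\kappa_\omega$ (Theorem \ref{th:E}), resp.\ $\square(\kappa)$ and $\diamondsuit$ --- feeding the structure-specific incompactness constructions (pump-up for groups, Shelah's graphs, Suslin trees, collectionwise Hausdorff spaces). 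Your successor/inaccessible case split is merely finer bookkeeping of the same dichotomy, so the two arguments coincide in substance.
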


There are other principles for $\kappa$ which are equivalent to weak compactness of $\kappa$ in $V = L$, but we cannot list them all for lack of space. In particular, there are principles equivalent to Fodor-type Reflection principle $\FRP$ (for all regular $\kappa \ge \omega_2$, $\FRP(\kappa)$ holds) which can be stated locally to characterize weak compactness of $\kappa$ using Theorem \ref{th:FRP}: for instance, by Fuchino and Rinot \cite{MR2784001}, $\FRP$ is equivalent to the statement  
\begin{quote} Any Boolean algebra is openly generated if and only if it is $\omega_2$-projective.
\end{quote} and by Fuchino et al.\ \cite{more_frp} also to the statement  
\begin{quote}
For any locally countably compact topological space $X$, if all subspaces
of $X$ of cardinality $<\omega_2$ are metrizable,  then $X$ itself is also metrizable.
\end{quote}

\subsection{Squares and stationary reflection} \label{sec:sr}

\subsubsection{Stationary reflection for subsets of ordinals}

If $\kappa$ is regular and $S \sub \kappa$, we say that $S$ \emph{reflects} if there is $\alpha < \kappa$ of uncountable cofinality such that $S \cap \alpha$ is a stationary subset of $\alpha$. We say that $S \sub \kappa$ is \emph{non-reflecting} if $S$ does not reflect.

\begin{definition}\label{def:srplus}\label{def:srfull}
Suppose $\kappa$ is a regular cardinal. We say that \emph{stationary reflection} holds at $\kappa$, $\SR(\kappa)$, if the following hold:
\bce[(i)]

\item If $\kappa$ is regular and limit cardinal, then $\SR(\kappa)$ means that every stationary subset $S \sub \kappa$ reflects.

\item If $\kappa$ is singular, then $\SR(\kappa^+)$ means that every stationary subset $S \sub \kappa^+$ reflects.

\item If $\kappa$ is regular, then $\SR(\kappa^+)$ means that every stationary subset $S \sub \kappa^+ \cap \cof(<\kappa)$ reflects. Note that $\kappa^+ \cap \cof(\kappa)$ is always non-reflecting which prevents full stationary reflection in this case.
\ece
\end{definition}

As we will observe, by Theorem \ref{th:E},  stationary reflection just for subsets of $E^\kappa_\omega = \kappa \cap \cof(\omega)$ characterizes weak compactness in $L$, and hence \emph{a fortiori} so does reflection for all stationary sets. By Magidor's result in \cite{M:sr}, $\SR(\omega_2)$ and $\SR(\aleph_{\omega+1})$ are  consistent,\footnote{\label{R6}Magidor \cite{M:sr} used $\omega$-many supercompact cardinals for the construction. Hayut and Unger \cite{MR4231611} lowered the assumption to a single $\kappa^+$-$\Pi^1_1$-subcompact cardinal which is weaker than a single $\kappa^+$-supercompact cardinal $\kappa$.} so stationary reflection does not characterize weak compactness in general.

By Harrington and Shelah \cite{HS:sr}, $\SR(\omega_2)$ is equiconsistent just with a Mahlo cardinal, while a strengthening of stationary reflection at successor cardinals from Definition \ref{def:ssrplus} to \emph{simultaneous stationary reflection} for two sets, $\SR(\omega_2,2)$, is equiconsistent with a weakly compact cardinal by Magidor \cite{M:sr}. This leads to the following more general definition:

\begin{definition}\label{def:ssrplus}
Suppose $\kappa$ is a regular cardinal and $\theta < \kappa$ is a cardinal. We say that \emph{simultaneous stationary reflection} (for $\theta$-many sets) holds at $\kappa$, denoted $\SR(\kappa,\theta)$, if for every collection of $\theta$-many sets there is $\alpha < \kappa$ of an uncountable cofinality such that all sets from the collection reflect at $\alpha$.\footnote{$\SR(\kappa,\kappa)$ is always inconsistent: the sets $S_\alpha = \kappa \setminus \alpha$, for $\alpha < \kappa$, are stationary (in fact, clubs), but they cannot reflect at a single ordinal.}
\end{definition}

Simultaneous reflection for infinitely many sets $\SR(\kappa,\theta)$ has been studied as a large cardinal property which implies failures of various forms of square (for instance Hayut and Lambie-Hanson show in \cite{MR3730566} that $\square(\kappa,<\lambda)$ kills $\SR(\kappa,<\lambda)$).\footnote{See  Footnotes \ref{R2} and \ref{R3} for some more details on squares, and also Section \ref{sec:sq}.} There is an application of results related to simultaneous stationary reflection in Fuchs and Rinot \cite{paper33} for the Subcomplete Forcing Axiom $\SCFA$: it is shown that it entails $\neg \square^*_\lambda$ for every singular cardinal $\lambda> 2^\omega$ of countable cofinality (see Fuchs \cite{MR4765799} for the necessity of $\lambda> 2^\omega$). $\SCFA$ deserves to be mentioned here because it is compatible with $\CH$, unlike $\MA$. See also Sakai and Switzer \cite[Corollary 3.7]{sakai2025separatingsubversionforcingaxioms} which shows that $\SCFA + 2^\omega = \omega_2$ does not imply $\TP(\omega_2)$, hence $\SCFA + 2^\omega = \omega_2$ is strictly weaker regarding compactness-type consequences than $\PFA$, $\MA + \SSR$, $\MA + \WRP$ (see Section \ref{sec:gsr}), and $\RC + 2^\omega = \omega_2$ (see Section \ref{sec:uni}).

Finally, let us consider another variant which provides an ultimate strengthening of simultaneous reflection:

\begin{definition} \label{def:csrplus}
Suppose $\kappa$ is a regular cardinal. We say that  \emph{club stationary reflection} holds at $\kappa^+$, denoted $\CSR(\kappa^+)$, if for every stationary $S \sub \kappa^+ \cap \cof(<\kappa)$ there is a club $C \sub \kappa^+$ such that $C \cap \cof(\kappa)$ is included in the set of reflection points of $S$ (we say that the set of reflection points contains a $\kappa$-club).
\end{definition}

See Section \ref{sec:preserve} where we discuss preservation of various forms of stationary reflection by  forcing notions.

As we will see, stationary reflection is often necessary for compactness of various mathematical structures, but it is known that it is not strong enough to be a sufficient condition as well. Several concepts have been introduced which provide non-trivial strengthenings of stationary reflection with compactness-type consequences. We will discuss three such principles in some detail: \emph{Reflection Principle}, \emph{Fodor-type Reflection Principle} and \emph{$\Delta$-reflection}, in Sections  \ref{sec:gsr}, \ref{sec:FRP} and \ref{sec:delta}, respectively.

\subsubsection{Non-reflecting stationary sets and squares}\label{sec:sq}

\begin{definition}\label{def:E}
Suppose $\kappa$ is a regular cardinal. We denote by $E(\kappa)$ the assertion that there is a stationary set $S \sub \kappa \cap \cof(\omega)$ (let us denote the set $\kappa \cap \cof(\omega)$ by $E^\kappa_\omega$) which does not reflect, i.e.\ for every $\alpha < \kappa$ of uncountable cofinality, $S\cap \alpha$ is not stationary in $\alpha$.
\end{definition}

\begin{theorem}[Jensen \cite{JENfine}]\label{th:E}
If $V =L$, then an uncountable regular cardinal $\kappa$ is not weakly compact if and only if $E(\kappa)$ holds.
\end{theorem}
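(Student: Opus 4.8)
The plan is to prove the two implications separately, noting that only the forward direction (not weakly compact $\Rightarrow E(\kappa)$) genuinely requires $V = L$; the reverse is a $\ZFC$ fact.

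For the direction $E(\kappa) \Rightarrow \kappa$ not weakly compact, I would argue by contraposition and use that weakly compact cardinals are exactly the $\Pi^1_1$-indescribable ones. Given any stationary $S \sub \kappa$, the assertion ``$S$ is stationary'' is $\Pi^1_1$ over $(H(\kappa), \in, S)$, so $\Pi^1_1$-indescribability reflects it to some $\alpha < \kappa$ of uncountable cofinality (indeed to an inaccessible $\alpha$), at which $S \cap \alpha$ is stationary in $\alpha$. Thus every stationary subset of $\kappa$ reflects; in particular every stationary $S \sub E^\kappa_\omega$ reflects, so $E(\kappa)$ fails. No hypothesis on $V$ is needed here.

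For the direction not weakly compact $\Rightarrow E(\kappa)$, I would work in $L$ and produce a non-reflecting stationary $S \sub E^\kappa_\omega$. Since $\GCH$ holds in $L$, a regular uncountable $\kappa$ is either a successor $\mu^+$ or inaccessible, and I would split into these two cases. The degenerate case $\kappa = \omega_1$ is trivial, since no ordinal below $\omega_1$ has uncountable cofinality, so any stationary set of limit ordinals is non-reflecting. In the successor case $\kappa = \mu^+$ with $\mu \ge \omega_1$, Jensen's global square supplies a $\square_\mu$-sequence $\seq{C_\beta}{\beta < \mu^+ \text{ limit}}$ in $L$; from it one extracts a non-reflecting stationary subset of $E^{\mu^+}_\omega$ by the standard argument, the point being that a reflection point $\delta$ with $\cf(\delta) > \omega$ would, by coherence, let $C_\delta$ induce a thread through the coherent sequence restricted to the reflected set, contradicting the bound $\mathrm{otp}(C_\delta) \le \mu$.

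The inaccessible case is the crux and is where fine structure enters. For $\kappa$ inaccessible but not weakly compact, $\kappa$ fails to be $\Pi^1_1$-indescribable, and I would convert this failure, via Jensen's condensation lemma for the $L$-hierarchy, into a canonical coherent non-threadable sequence on $\kappa$; equivalently, I would invoke Theorem~\ref{th:sq}, which yields $\square(\kappa)$ in $L$ for every non-weakly-compact regular $\kappa$. From a $\square(\kappa)$-sequence one then derives a non-reflecting stationary subset of $E^\kappa_\omega$ by Todor{\v c}evi{\'c}'s argument, a reflection point again producing a forbidden thread. I expect this inaccessible step to be the main obstacle: the combinatorial passage from a square-sequence to a non-reflecting set is routine, but establishing the square-sequence itself—equivalently, showing that reflection for subsets of $E^\kappa_\omega$ forces $\Pi^1_1$-indescribability—rests entirely on the condensation properties of $L$ and constitutes the genuinely fine-structural content of the theorem.
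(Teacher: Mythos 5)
Your proof that weak compactness kills $E(\kappa)$, the $\omega_1$ case, and the successor case are sound. (For the successor case, the correct form of the ``standard argument'' is: since $\mathrm{otp}(C_\beta)\le\mu$ for every limit $\beta<\mu^+$, pigeonhole gives a stationary $S'\sub E^{\mu^+}_\omega$ on which $\mathrm{otp}(C_\beta)$ is a constant $\eta$; if some $\delta$ of uncountable cofinality reflected $S'$, then picking $\beta_1<\beta_2$ in $S'\cap\mathrm{Lim}(C_\delta)$, coherence gives $C_{\beta_i}=C_\delta\cap\beta_i$, so the two sets have distinct order types --- a contradiction. Note that the bound $\mathrm{otp}(C_\beta)\le\mu$ is used only to run the pigeonhole, not to forbid a thread.)

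The inaccessible case, however, contains a genuine gap, and it sits exactly at the step you call routine. The implication ``$\square(\kappa)$ implies there is a non-reflecting stationary subset of $\kappa$'' is not a theorem of $\ZFC$: Hayut and Lambie-Hanson \cite{MR3730566} (the paper cited in Section \ref{sec:sr}) show that, relative to a weakly compact cardinal, $\square(\kappa)$ is consistent with reflection of \emph{every} stationary subset of $\kappa$; the sharp calibration is that a coherent sequence of width $<\lambda$ kills simultaneous reflection of $\lambda$-many stationary sets, so $\square(\kappa)$ itself only kills reflection of \emph{pairs}. Your ``forbidden thread'' argument cannot be carried out because at an inaccessible $\kappa$ the order types $\mathrm{otp}(C_\alpha)$ are unbounded in $\kappa$, hence cannot be stabilized on a stationary set, and coherence at a single reflection point yields only local agreement, never a thread. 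What Jensen's fine-structural construction actually provides in $L$ is strictly more than $\square(\kappa)$: a coherent sequence that \emph{avoids} a stationary $E\sub E^\kappa_\omega$, meaning $E\cap\mathrm{Lim}(C_\alpha)=\emptyset$ for every limit $\alpha$; avoidance gives non-reflection at once, since $\mathrm{Lim}(C_\delta)$ is club in every $\delta$ of uncountable cofinality. (This is precisely the strengthening the paper alludes to in its proof of Theorem \ref{th:Ls}.) Alternatively, one constructs $E$ directly from a $\Pi^1_1$ witness to the failure of weak compactness by condensation; that is the route taken in Eklof--Mekler \cite{modules}, Chapter VI, Section 3, which is the paper's entire stated proof of this theorem. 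Either way, the fine structure must be made to deliver the avoidance property (or the direct construction); $\square(\kappa)$ alone provably cannot.
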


\begin{proof}
See a detailed and very well written exposition in Eklof and Mekler \cite{modules} (Chapter VI, Section 3).
\end{proof}

As we will see below, non-reflection stationary subsets can be used to construct incompact abelian groups, graphs and Suslin trees and various other structures. It is known that Jensen's square principle $\square_\kappa$ implies the existence of a non-reflecting stationary subsets of $E^{\kappa^+}_\omega$. Todor{\v c}evi{\'c} defined a square principles $\square(\kappa)$ which makes sense on limit cardinals as well and showed (among other things) that it characterizes weak compactness in $V = L$. Similarly to the principle $E(\kappa)$, $\square(\kappa)$ is often used to construct incompact mathematical structures (for instance chromatically incompact graphs as in Theorem \ref{th:Lg} or collectionwise incompact graphs in Section \ref{sec:top}). 

Let us give the definition of $\square(\kappa)$ for completeness, according to Todor{\v c}evi{\'c} \cite{MR908147}.\footnote{We will not give more details on squares for lack of space here, but see Footnotes \ref{R2} and \ref{R3} for some more information.}

\begin{definition}\label{def:sq}
Let $\kappa$ be a regular uncountable cardinal. We say that $\square(\kappa)$ holds if and only if there exists a sequence $\seq{C_\alpha}{\alpha < \kappa}$ such that:
\bce[(i)]
\item $\forall \alpha  <\kappa$, $C_\alpha \sub \alpha$ is a club subset of $\alpha$.
\item $\forall \alpha < \beta$, if $\alpha$ is a limit point of $C_\beta$, then $C_\alpha = C_\beta \cap \alpha$.
\item There is no club $C \sub \kappa$ such that for every limit point $\alpha$ of $C$, $C_\alpha = C \cap \alpha$.
\ece
\end{definition}

\begin{theorem}[Jensen \cite{JENfine}]\label{th:sq}
If $V =L$, then an uncountable regular cardinal $\kappa$ is not weakly compact if and only if $\square(\kappa)$ holds.
\end{theorem}

For the proof, see Jensen \cite[Theorem 6.1]{JENfine}, which actually obtains a stronger version of $\square(\kappa)$ on all regular non-weakly compact cardinals $\kappa>\omega_1$ in $L$ (the item (iii) is replaced by the stronger condition that there exists a stationary set $S \sub \kappa$ which contains no limit points of $C_\alpha$ for all $\alpha < \kappa$). The version of $\square(\kappa)$ in Definition \ref{def:sq} was used by Todor{\v c}evi{\'c} \cite[(1.10)]{MR908147}, among other things, to obtain a related result  applicable in the context of $V \neq L$ as well: If $\kappa > \omega_1$ is regular and not weakly compact in $L$, then $\square(\kappa)$ holds. Independently, Shelah and Stanley \cite[Proposition 6.1]{LSnon} state the same theorem (with the a slightly weaker assumption that $\kappa$ is regular in $L$ and its cofinality is $>\omega$ in $V$), with a considerably more detailed proof (and with an argument for a missing step in the original Jensen's proof).  Since $\PFA$ implies that $\square(\kappa)$ fails for every regular $\kappa>\omega_1$ by Todor{\v c}evi{\'c} \cite{MR763902}, it follows that $\PFA$ implies that all regular cardinals $\kappa>\omega_1$ are weakly compact in $L$. See the discussion in Rinot \cite[Theorem 1.3]{MR3271280} for more context and historical details.

\subsubsection{Generalized stationary reflection}\label{sec:gsr}

Foreman et al.\ \cite{FMS:MM} introduced a generalized form of stationary reflection called the \emph{Reflection Principle},  $\RP(\kappa)$, for regular $\kappa > \omega_1$ and subsets of $[\kappa]^\omega$, and showed that $\MM$ entails $\RP(\kappa)$ for all $\kappa >\omega_1$. As we mentioned above, this form of reflection is stronger than stationary reflection for subsets of ordinals, and entails compactness of various mathematical structures (see Section \ref{sec:FRP} for the Fodor-type Reflection Principle, a weakening of the Reflection Principles, and its consequences). See Jech \cite[Section 4]{JECHhandbook} or \cite[Section 38]{JECHbook}  for the definitions related to the generalized notion of stationarity in $[\kappa]^\omega$, or more generally in $\power_\kappa(\lambda)$.

\begin{definition}\label{def:rp}
$\RP(\kappa)$ holds if and only for every stationary $S \sub [\kappa]^\omega$ there is $I \sub \kappa$ of size $\omega_1$, $\omega_1\sub I$, with the cofinality of $I$ equal to $\omega_1$ such that  $S \cap [I]^\omega$ is stationary in $[I]^\omega$. We write $\RP$ if $\RP(\kappa)$ holds for every regular $\kappa \ge \omega_2$.
\end{definition}

\brm \label{rm:I}
Definition \ref{def:rp} is equivalent to a principle which asserts that there are stationarily many such $I$ in $[\kappa]^{\omega_1}$ (see for instance Fuchino and Rinot \cite[Lemma 1.1]{MR2784001} for a proof, which is stated for a weaker Fodor-type Reflection Principle which we discuss in the next section). In particular, $I$ in Definition \ref{def:rp}  satisfies without loss of generality the extra condition of containing a club $C$ of order type $\omega_1$ with $\sup(C) = \sup(I)$.
\erm

If we remove the condition on $I$ having the ordertype of $\omega_1$ (and leave other conditions exactly as they are), we obtain an ostensibly weaker notion\footnote{$\WRP(\kappa)$ is the principle first formulated by Foreman et al.\ \cite{FMS:MM} and called there the \emph{Strong Reflection}. It is open in general whether $\WRP(\kappa)$ and $\RP(\kappa)$ are equivalent (see K{\"o}nig et al.\ \cite{MR2320769} for some partial results). There are several other variants of these principles, such as Fleissner's Axiom R, see \cite{MR2610450} for an extensive discussion. See also Krueger \cite{MR2674000} for separations of some other principles related to $I$.} called \emph{the weak reflection property}, $\WRP(\kappa)$.

The principle $\RP(\kappa)$ implies stationary reflection for subsets of $E^\kappa_\omega$, and in particular $\RP(\omega_2)$ implies $\SR(\omega_2)$. Let us give an elementary proof:

\begin{lemma}\label{lm:RP}
$\RP(\kappa)$ implies that every stationary subset of $E^\kappa_\omega$ reflects (at an ordinal of cofinality $\omega_1$).
\end{lemma}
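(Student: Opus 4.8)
The plan is to pass from the ordinal set $S$ to an associated subset of $[\kappa]^\omega$, apply $\RP(\kappa)$ there, and then translate the resulting reflecting set back into a reflection point for $S$. Concretely, I would set $T_S = \set{x \in [\kappa]^\omega}{\sup(x) \in S}$ and first verify that $T_S$ is stationary in $[\kappa]^\omega$. For this, recall that the club filter on $[\kappa]^\omega$ is generated by the closure-point sets of functions $F \colon [\kappa]^{<\omega} \to \kappa$; given such an $F$, the set of ordinals $\beta < \kappa$ closed under $F$ contains a club of $\kappa$, and intersecting with the stationary $S$ yields some $\beta \in S$ closed under $F$. Since $\cf(\beta) = \omega$, one can build a countable $x \subseteq \beta$ cofinal in $\beta$ and closed under $F$, so $x \in T_S$ lies in the given club. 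Hence $T_S$ is stationary.

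Next I would apply $\RP(\kappa)$ to $T_S$, obtaining $I \subseteq \kappa$ with $|I| = \omega_1$, $\omega_1 \subseteq I$, $\cf(\sup I) = \omega_1$, and $T_S \cap [I]^\omega$ stationary in $[I]^\omega$. By Remark~\ref{rm:I} I may assume $I$ contains a closed cofinal set $C$ of order type $\omega_1$ with $\sup(C) = \sup(I)$. Put $\alpha = \sup(I)$, so that $\cf(\alpha) = \omega_1$; this will be the reflection point, and it already carries the required uncountable cofinality.

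The remaining step is to show that $S \cap \alpha$ is stationary in $\alpha$. Here I would take an arbitrary club $E \subseteq \alpha$ and use it to manufacture a club in $[I]^\omega$ concentrating on $T_S$. The crucial observation is that $E \cap C$ is club in $\alpha$ and is contained in $I$ (since $C \subseteq I$), hence cofinal in $\alpha$; I then define $h \colon I \to I$ by $h(\beta) = \min((E \cap C) \setminus (\beta+1))$, which is well defined and takes values in $I$. The set $\mathcal{C}_h$ of $x \in [I]^\omega$ closed under $h$ is club in $[I]^\omega$, and for any such $x$ the set $x \cap (E \cap C)$ is cofinal in $\sup(x)$. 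Because $\cf(\alpha) = \omega_1$ forces $\sup(x) < \alpha$ for every countable $x \subseteq I$, closedness of $E$ in $\alpha$ gives $\sup(x) \in E$. Intersecting $\mathcal{C}_h$ with the stationary $T_S \cap [I]^\omega$ produces some $x$ with $\sup(x) \in S$ and $\sup(x) \in E$, whence $(S \cap \alpha) \cap E \neq \emptyset$. As $E$ was arbitrary, $S \cap \alpha$ is stationary and $S$ reflects at $\alpha$, a point of cofinality $\omega_1$.

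I expect the main obstacle to be the bookkeeping in the two translation steps — in particular ensuring in the last step that every function used to generate clubs in $[I]^\omega$ genuinely takes values inside $I$ (which is exactly where the club $C \subseteq I$ supplied by Remark~\ref{rm:I} is needed) and that the supremum of a countable subset of $I$ always stays strictly below $\alpha$ (which is precisely what $\cf(\alpha) = \omega_1$ guarantees). The stationarity transfer for $T_S$ and the closure construction $\mathcal{C}_h$ are otherwise routine.
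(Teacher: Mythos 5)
Your proof is correct and takes essentially the same route as the paper's: you lift $S$ to the same stationary set of countable sets (the paper's $S^*$), apply $\RP(\kappa)$ together with Remark~\ref{rm:I} to obtain $I$ with a cofinal closed copy $C$ of $\omega_1$, and pull reflection back down to $\sup(I)$, an ordinal of cofinality $\omega_1$. The only difference is cosmetic, in the final translation step: you generate the club in $[I]^\omega$ by closure under the stepping function $h$ along $E \cap C$, whereas the paper restricts to clubs $A$ of limit points of its copy of $\omega_1$ and observes directly that $\{x \in [I]^\omega \mid \sup(x) \in A\}$ is club; the two implementations are routine and equivalent.
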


\begin{proof}
Let $S \sub E^\kappa_\omega$ be stationary. Define $S^* = \set{x \in [\kappa]^\omega}{\exists \alpha \in S, \mx{sup}(x) = \alpha}$. It is easy to observe that  $S^*$ is stationary in $[\kappa]^\omega$. In some detail: let $C^*_F$  generated by some $F: [\kappa]^{<\omega} \to \kappa$ and let $C$ be the club in $\kappa$ of ordinals closed under $F$. Since $S$ is stationary, there is some $\alpha \in C \cap S$ of countable cofinality. Since $\alpha$ is closed under $F$, a closure-type argument yields a countable $x$ cofinal in $\alpha$ closed under $F$, and any such $x$ is in $S^* \cap C^*_F$. By $\RP(\kappa)$, there is $I$ of size $\omega_1$, $\omega_1 \sub I$, and cofinality $\omega_1$ where $S^*$ reflects. Let  $\gamma = \mx{sup}(I)$ and let $D$ be a closed unbounded set in $\gamma$ of type $\omega_1$ contained in $I$ (this is possible by Remark \ref{rm:I}). Consider the set of its limit points $\mx{Lim}(D)$; it has the useful property that every $\alpha \in \mx{Lim}(D)$ has the countable cofinality. To finish the proof, we show that $S \cap \gamma$ is stationary in $\gamma$. Since $\mx{Lim}(D)$ is a club in $\gamma$, it suffices to consider club subsets of $\mx{Lim}(D)$. Let $A$ be any club in $\gamma$ contained in $\mx{Lim}(D)$ and define $A^* = \set{x \in [I]^\omega}{\exists \alpha \in A, \mx{sup}(x) = \alpha}$.  Clearly, $A^*$ is a club in $[I]^\omega$. Fix some $x \in S^* \cap A^*$. Then $\mx{sup}(x) \in A \cap S$ and the proof is finished.
\end{proof}

The principle $\WRP$ implies that every poset which preserves stationary subsets of $\omega_1$ is semi-proper, and this consequence was shown by Shelah \cite[Chapter XIII, 1.7]{SHELAHproper} to be equivalent to a strictly weaker reflection principle called the \emph{Semi-proper stationary reflection}, $\SSR$. The principle $\SSR$ has many of the consequences of $\RP$, such as the global failure of squares (see Sakai and Veli\u{c}kovi\'{c} \cite{MR3284479} for more details)\footnote{The notation is sometimes inconsistent in the literature. The article \cite{MR3284479} uses the notation $\SR$ (for \emph{stationary reflection}) to denote the principle which we denote $\WRP$.}. It it worth mentioning that both $\SSR$ and $\WRP(\omega_2)$\footnote{It is open whether $\RC$ implies $\WRP$.} are consequences of Rado's Conjecture, $\RC$, which we discuss in Section \ref{sec:graphs}.  They are both   related to strong tree properties:  Sakai and Veli{\v c}kovi{\'c} \cite[Theorem 3.1]{MR3284479}  showed that $\MA$ for Cohen forcing  with $\WRP$ implies $\ITP_{\omega_2}$, and with $\SSR$ implies $\TP_{\omega_2}$.

It  is known that $\WRP(\omega_2)$ implies $2^\omega \le \omega_2$ (see \cite[Theorem 7.8]{Tdich}). This is of some interest for us because it implies that $\WRP(\omega_2)$ and $\RC$ cannot be indestructible under Cohen forcing, in contrast to $\SR(\omega_2)$ and the Fodor-type Reflection Principle, $\FRP$, which we discuss in next Section \ref{sec:FRP},  which are preserved by all ccc forcings.

\brm \label{rm:MM}
The principle $\RP$, and  similar compactness principles like $\FRP$, are usually considered with stationarity in $[\kappa]^\omega$. Generalizations to stationarity in $[\kappa]^\theta$ for regular $\theta > \omega$ present new problems; most importantly, countable subsets of $\kappa$ can be identified with countable elementary submodels of $H(\kappa)$ if $|H(\kappa)| = \kappa$, and these are closed under all sequences of length $<\omega$. In contrast, uncountable submodels $M$ of $H(\kappa)$ may not be closed under $<|M|$-sequences, and various variants of approachability are considered to deal with these models (see for instance Krueger \cite{MR2674000}). This appears to be the same obstacle--- in a different language---as the one which prevents a straightforward generalization of the Proper Forcing Axiom or Martin's Maximum to higher cardinals.\footnote{Forcing axioms and the structure of stationary sets in $[\kappa]^\theta$ are closely connected. For example, by Woodin \cite[Theorem 2.53]{MR2723878}, for every $\P$, there exists for every collection $D$ of $\omega_1$-many dense sets a filter $F \sub \P$ meeting every set in $D$ if and only if the set $S^{\omega_1}_{\P}$ of all $M$ such that for a sufficiently large $\kappa$, $M \el H(\kappa)$, $\omega_1 \sub M$, $|M| = \omega_1$, $\P \in M$, and there exists a filter $F \sub M$ meeting every dense set in $\P$ which is an element of $M$, is stationary in $[H(\kappa)]^{\omega_1}$ (see Viale \cite{MR3453587} for a generalization to an arbitrary uncountable successor cardinal). By one of the several characterizations of properness, it is consistent (under $\PFA$) that preservation of stationary subsets of $[\kappa]^\omega$ by a forcing $\P$ implies stationarity of $S^{\omega_1}_{\P}$  (recall that $\P$ is proper if and only if $\P$ preserves stationary subsets of $[\kappa]^\omega$ for every regular uncountable $\kappa$). However, the preservation of stationarity of $[\kappa]^\theta$ for uncountable $\theta$ is a more complicated notion, as we briefly mentioned in the main body of the text where we discussed the notion of approachability.}
 However, note that $\Delta$-reflection discussed below does deal with stationarity in $[\kappa]^\theta$ for uncountable $\theta$, so it is in this sense more general than $\RP$ and $\FRP$ (but $\Delta$-reflection is rather specific as it can only hold at successors of singulars, or large cardinals).
\erm

\subsubsection{Fodor-type Reflection Principle}\label{sec:FRP}

The reflection principle $\RP$ has an important weakening called the \emph{Fodor-type Reflection Principle}, $\FRP$, introduced by Fuchino et al.\ \cite{MR2610450}. It is actually equivalent to various other compactness principles (see the bullets below). This universality makes $\FRP$ similar to $\Delta$-reflection which implies these principles as well, but is strictly stronger (see Section \ref{sec:delta} for $\Delta$-reflection).

\begin{definition}\label{def:FRP}
Let $\kappa$ be a regular cardinal $\ge \omega_2$. The \emph{Fodor-type Reflection Principle} for $\kappa$, $\FRP(\kappa)$, is the following statement: For any stationary $S \sub E^\kappa_\omega = \set{\alpha<\kappa}{\cf(\alpha) = \omega}$ and a mapping $g: S \to [\kappa]^{\le \omega}$ there is $I \in [\kappa]^{\omega_1}$ such that
\bce[(i)] 
\item $\cf(I) = \omega_1$,
\item $g(\alpha) \sub I$ for all $\alpha \in I \cap S$,
\item For any regressive $f: S \cap I \to \kappa$ such that $f(\alpha) \in g(\alpha)$ for all $\alpha \in S \cap I$, there is $\xi^* < \kappa$ such that $f^{-1}"\{\xi^*\}$ is stationary in $\mx{sup}(I)$.
\ece
We write $\FRP$ if $\FRP(\kappa)$ holds for every regular $\kappa \ge \omega_2$.
\end{definition}

\brm \label{rm:Ifrp}
Definition \ref{def:FRP} is equivalent to a principle which asserts that there are stationarily many such $I$ in $[\kappa]^{\omega_1}$. In particular, without loss of generality, we can assume $\omega_1 \sub I$ and that $I$ contains a club in $\sup(I)$ (see Fuchino and Rinot \cite[Lemma 1.1]{MR2784001} for details).
\erm

By Remark \ref{rm:Ifrp}, (iii) immediately gives that $S \cap I$ is stationary in $\sup(I)$, and thus $\FRP(\kappa)$ implies that all stationary subsets of $E^\kappa_\omega$ reflect (at an ordinal of cofinality $\omega_1$). By Theorem \ref{th:E}, this immediately yields:

\begin{theorem}\label{th:FRP}
Suppose $V = L$ and $\kappa$ is an uncountable regular cardinal, then the following are equivalent:
\bce[(i)]
\item $\kappa$ is weakly compact.
\item $\FRP(\kappa)$.
\ece
\end{theorem}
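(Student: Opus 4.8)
The plan is to prove the two implications separately, noting that (i)$\Rightarrow$(ii) is in fact a $\ZFC$ theorem and that only (ii)$\Rightarrow$(i) uses $V=L$.

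For (ii)$\Rightarrow$(i), I would simply assemble pieces already at hand. The paragraph preceding the statement records that, via Remark \ref{rm:Ifrp} together with clause (iii) of Definition \ref{def:FRP}, $\FRP(\kappa)$ implies that every stationary subset of $E^\kappa_\omega$ reflects (at an ordinal of cofinality $\omega_1$). In particular there is no non-reflecting stationary subset of $E^\kappa_\omega$, i.e.\ $E(\kappa)$ from Definition \ref{def:E} fails. Since $V=L$, Theorem \ref{th:E} gives that $E(\kappa)$ fails precisely when $\kappa$ is weakly compact, so $\kappa$ is weakly compact. This is the only place where $V=L$ is used.

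For (i)$\Rightarrow$(ii), fix a stationary $S\sub E^\kappa_\omega$ and $g\colon S\to[\kappa]^{\le\omega}$; by Remark \ref{rm:Ifrp} it suffices to produce $I\in[\kappa]^{\omega_1}$ as in Definition \ref{def:FRP} (indeed stationarily many). The key reduction is that clauses (ii) and (iii) become almost automatic once $I$ is chosen inside a reflection point of the right cofinality. Concretely, I would first locate an ordinal $\delta<\kappa$ with $\cf(\delta)=\omega_1$, closed under $g$ (i.e.\ $g(\alpha)\sub\delta$ for all $\alpha\in S\cap\delta$), and with $S\cap\delta$ stationary in $\delta$; then take $I\sub\delta$ to be a club in $\delta$ of order type $\omega_1$ with $\omega_1\sub I$, still closed under $g$. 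Clause (i) holds since $\cf(I)=\cf(\delta)=\omega_1$, clause (ii) holds by the closure of $I$ under $g$, and $S\cap I=S\cap\delta\cap I$ is stationary in $\delta$ because $I$ is club in $\delta$. For clause (iii) I would invoke the pressing-down lemma for ordinals of uncountable cofinality: any regressive $f\colon S\cap I\to\kappa$ (in particular one with $f(\alpha)\in g(\alpha)$) is constant on a subset of $S\cap I$ stationary in $\sup(I)=\delta$, which provides the required $\xi^*$.

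The crux is therefore the existence of such a $\delta$, and here I would use the reflection strength of weak compactness. Since $\kappa$ is weakly compact, the trace $\set{\alpha<\kappa}{S\cap\alpha \text{ is stationary in }\alpha}$ lies in the weakly compact filter $\mathcal W$ (a standard $\Pi^1_1$-indescribability computation), while $E^\kappa_{\omega_1}$ is $\mathcal W$-positive and the closure points of $g$ form a club, hence lie in $\mathcal W$. By normality of $\mathcal W$, the intersection of two $\mathcal W$-sets with a $\mathcal W$-positive set is again $\mathcal W$-positive, in particular stationary and nonempty; any element of it is a suitable $\delta$ (and varying the auxiliary club shows that there are stationarily many). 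The main obstacle to watch is precisely this cofinality bookkeeping: a generic reflection point of $S$ carries only uncountable cofinality, so the argument must pin the cofinality to $\omega_1$ exactly, which is exactly what the $\mathcal W$-positivity of $E^\kappa_{\omega_1}$ relative to the normal weakly compact filter delivers.
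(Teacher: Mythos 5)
Your treatment of (ii)$\Rightarrow$(i) is exactly the paper's proof (indeed it is the only direction the paper proves explicitly): $\FRP(\kappa)$ yields reflection of every stationary subset of $E^\kappa_\omega$ at an ordinal of cofinality $\omega_1$, and Theorem \ref{th:E} converts this into weak compactness under $V=L$. The problem is your proof of (i)$\Rightarrow$(ii), which you claim is a $\ZFC$ theorem; both of its key steps fail. The central error is the assertion that $E^\kappa_{\omega_1}$ is positive with respect to the weakly compact filter $\mathcal W$. This is backwards: since ``$\kappa$ is inaccessible'' is a true $\Pi^1_1$ statement over $V_\kappa$, the set of inaccessible cardinals below $\kappa$ belongs to $\mathcal W$, and that set is disjoint from $E^\kappa_{\omega_1}$, so $E^\kappa_{\omega_1}$ is $\mathcal W$-\emph{null}. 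Consequently the indescribability computation only ever produces \emph{inaccessible} reflection points $\delta$, never points of cofinality $\omega_1$; and once $S\cap\delta$ is stationary in an inaccessible but non-weakly-compact $\delta$, nothing guarantees any further reflection below $\delta$. So weak compactness does not deliver the ordinal $\delta$ on which your whole construction rests.

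The second step is also broken. The ``pressing-down lemma for ordinals of uncountable cofinality'' is false as stated: Fodor's lemma is a theorem about regular cardinals, and already at $\delta=\gamma+\omega_1$ the map $\gamma+\xi\mapsto\xi$ is regressive and injective on a stationary set. Your application can be repaired only when $f$ maps into $I$ itself (which requires the $g$-closure of $I$) \emph{and} $I$ has order type $\omega_1$, so that the order isomorphism transfers $f$ to a genuinely regressive function on a stationary subset of $\omega_1$; arranging $g$-closure and order type $\omega_1$ simultaneously is precisely the nontrivial content of $\FRP$, not bookkeeping. (Note also that your requirements ``club in $\delta$ of order type $\omega_1$'' and ``$\omega_1\sub I$'' are jointly contradictory unless $\delta=\omega_1$.) There is a structural reason no argument of this soft kind can succeed: at $\kappa=\omega_2$ every reflection point of a stationary $S\sub E^{\omega_2}_\omega$ automatically has cofinality $\omega_1$, and club-many such points are closed under $g$, so your reduction would show in $\ZFC$ that stationary reflection at $\omega_2$ implies $\FRP(\omega_2)$ --- contradicting the result of Fuchino et al.\ \cite{MR2610450}, cited in Section \ref{sec:FRP}, that $\FRP(\kappa)$ is strictly stronger than stationary reflection for subsets of $E^\kappa_\omega$. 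The direction (i)$\Rightarrow$(ii) is genuinely delicate (the paper itself leaves it to the literature rather than arguing it), and your proposal does not establish it.
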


By results in Fuchino et al.\ \cite{MR2610450}, $\FRP(\kappa)$ is strictly stronger than stationary reflection for subsets of $E^\kappa_\omega$, and strictly weaker than other natural principles postulating reflection for generalized stationary sets such as $\RP$ or $\WRP$ mentioned above.

By Miyamoto \cite{frp_consistency}, $\FRP(\omega_2)$ is equiconsistent just with the Mahlo cardinal, analogously to the usual stationary reflection $\SR(\omega_2)$. However,  $\FRP$ in its global form is quite strong (all known constructions start with a strongly compact cardinal) because it implies the failure of $\square_\lambda$ for any uncountable $\lambda$ (see Fuchino and Rodrigues \cite{ReflPrinciples}).

$\FRP$ is equivalent to several compactness properties for specific mathematical structures:

\begin{itemize}
\item By Fuchino and Rinot \cite{MR2784001} $\FRP$ is equivalent to the statement that every Boolean algebra is openly generated if and only if it is $\omega_2$-projective.
\item By Fuchino et al.\ \cite[Theorem 3.1]{more_frp} , $\FRP$ is equivalent to countable color compactness of all graphs. See also Section \ref{sec:color}.
\item By \cite[Theorem 4.1]{more_frp}, $\FRP$ is equivalent to collectionwise Hausdorff compactness. See also Section \ref{sec:top}.
\item By \cite{more_frp}, $\FRP$ is equivalent to another topological property, namely ``For any locally countable compact topological space $X$, if all subspaces of $X$ of cardinality $\le \omega_1$ are metrizable, then $X$ is also metrizable''. The fact that this principle is a consequence of $\FRP$ is proved in Fuchino et al.\ \cite{MR2610450}.
\end{itemize}

By \cite[Theorem 3.4]{MR2610450}, $\FRP(\kappa)$ for any regular $\kappa \ge \omega_2$ is preserved by all ccc forcings. By a discussion in Section \ref{sec:sep}, this immediately implies that $\FRP$ (even when combined with $\ISPii$) does not decide some of the well-known independent statements in mathematics.

\subsubsection{$\Delta$-reflection}\label{sec:delta}

In order to prove Theorem \ref{th:fullc} on compactness for abelian groups (which we discuss below), Magidor and Shelah formulated in \cite{MS:free}  a principle postulating a strong form of stationary reflection, called the $\Delta$-reflection. Suppose $\lambda < \kappa$ and $\kappa$ is regular. As we will see in Definition \ref{def:delta}, $\Delta_{\lambda,\kappa}$-reflection is similar to the Reflection Principle and the Fodor-type Reflection Principle, which we discussed above, but it is stronger in the sense that it deals with stationarity of subsets of $[\kappa]^\theta$, where $\theta<\lambda$ can be uncountable. Since in its strongest form, i.e.\ $\Delta_{<\kappa,\kappa}$-reflection, implies full stationary reflection at a successor cardinal $\kappa$, $\kappa$ must be a successor of a singular cardinal for this form of reflection to hold.  

The principle is applicable to a wide class of mathematical structures whose compactness shares certain  properties with ``freeness'' of abelian groups.\footnote{The problem of compactness for abelian groups, originating  itself from a work of Shelah on the Whitehead's Conjecture, motivated the formulation of this principle, see Magidor and Shelah \cite{MS:free} and Section \ref{sec:ab}. See Remark \ref{rm:W} and Section \ref{sec:sep} for more details on Whitehead's Conjecture.}

\begin{definition}[Magidor--Shelah, \cite{MS:free}] \label{def:delta}
Suppose $\lambda < \kappa$ are uncountable cardinals, with $\kappa $ regular. $\Delta_{\lambda,\kappa}$ is the statement that, for every stationary $S \sub \set{\alpha <\kappa}{\cf(\alpha) < \lambda}$ and every algebra $A$ on $\kappa$ with  fewer than $\lambda$ operations, there is a subalgebra $A^*$ of $A$ such that, letting $\delta = \mx{otp}(A^*)$, the following hold:
\bce[(i)]
\item $\delta$ is a regular cardinal;
\item $\delta < \lambda$;
\item $S \cap A^*$ is stationary in $\mx{sup}(A^*)$.
\ece
We say that $\kappa$ has the $\Delta_{<\kappa,\kappa}$-reflection if $\Delta_{\lambda,\kappa}$ holds for every $\lambda < \kappa$. We say that $\kappa$ has the global $\Delta_\kappa$-reflection if $\Delta_{\kappa,\nu}$ holds for all regular $\nu > \kappa$.
\end{definition}

Notice that if $\kappa$ is singular, then $\Delta_{\kappa,\kappa^+}$ implies $\SR(\kappa^+)$.

$\Delta$-reflection provides a characterization of weak compactness in $L$:

\begin{theorem}\label{th:Ldelta}
Suppose $V = L$ and $\kappa$ is an uncountable regular cardinal, then the following are equivalent:
\bce[(i)]
\item $\kappa$ is weakly compact,
\item $\kappa$ has the $\Delta_{<\kappa,\kappa}$-reflection.
\ece
\end{theorem}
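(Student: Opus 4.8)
The plan is to prove the two implications separately, in both cases routing through Jensen's characterization of weak compactness in $L$ (Theorem \ref{th:E}): under $V=L$, $\kappa$ is weakly compact if and only if $\neg E(\kappa)$, i.e.\ if and only if every stationary $S\subseteq E^\kappa_\omega$ reflects at an ordinal of uncountable cofinality. Thus it suffices to show, working in $L$, that $\Delta_{<\kappa,\kappa}$-reflection is equivalent to reflection of every stationary subset of $E^\kappa_\omega$.

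For (i)$\Rightarrow$(ii) I would appeal to the theorem of Magidor and Shelah \cite{MS:free} that a weakly compact $\kappa$ satisfies $\Delta_{<\kappa,\kappa}$; this is provable in $\ZFC$, not only in $L$, and is one of the reasons the principle was isolated. The idea is the usual elementary-embedding argument: given $\lambda<\kappa$, a stationary $S\subseteq E^\kappa_{<\lambda}$ and an algebra $A$ on $\kappa$ with $<\lambda$ operations, weak compactness is used to reflect, for a suitable regular $\delta<\lambda$, the stationarity of $S$ to a point $\gamma$ of cofinality $\delta$ carrying a cofinal subalgebra of order type exactly $\delta$ on which $S$ stays stationary. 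The delicate point is matching the order type of the subalgebra to a regular $\delta<\lambda$ while keeping it closed under the $<\lambda$ operations; this is precisely where the regularity clause (i) of Definition \ref{def:delta} and the bound on the number of operations enter, and I would import this half from \cite{MS:free} rather than reprove it.

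For (ii)$\Rightarrow$(i), assume $\Delta_{<\kappa,\kappa}$ and let $S\subseteq E^\kappa_\omega$ be stationary; I want $S$ to reflect at a point of uncountable cofinality. Take $\lambda=\omega_2$ (available when $\kappa>\omega_2$) and build an algebra $A$ on $\kappa$ equipped with $\omega_1$ unary operations $\langle h_\xi \mid \xi<\omega_1\rangle$ arranged, via a fixed bijection $\kappa\cong\kappa\times\omega_1$, so that the values $h_\xi(x)$ are pairwise distinct for each $x$; then any nonempty subalgebra has size $\ge\omega_1$. Since $A$ has $\omega_1<\omega_2$ operations, $\Delta_{\omega_2,\kappa}$ applies and yields a subalgebra $A^*$ with $\delta=\mx{otp}(A^*)$ a regular cardinal $<\omega_2$ and $S\cap A^*$ stationary in $\gamma:=\mx{sup}(A^*)$. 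As $A^*$ is nonempty it has order type $\ge\omega_1$, so $\delta=\omega_1$ and $\cf(\gamma)=\omega_1$. Finally $S\cap A^*\subseteq S\cap\gamma$, so $S\cap\gamma$ is stationary in $\gamma$ and $S$ reflects at $\gamma$, a point of uncountable cofinality. Hence $\neg E(\kappa)$, and Theorem \ref{th:E} gives that $\kappa$ is weakly compact.

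The main obstacle is the cofinality-$\omega$ case, which is exactly why the argument above needs $\ge\omega_1$ operations and hence $\lambda\ge\omega_2$. When $\kappa>\omega_2$ this is harmless, but at $\kappa=\omega_2$ the only admissible instance is $\Delta_{\omega_1,\omega_2}$, whose conclusion forces $\delta=\omega$ and so only ever reflects $S$ at points of cofinality $\omega$, which is useless for Jensen's criterion. Since $\omega_2$ is never weakly compact, there one must instead show outright that $\Delta_{\omega_1,\omega_2}$ fails in $L$; this I would do by a direct construction of an incompact algebra from a non-reflecting stationary $S\subseteq E^{\omega_2}_\omega$ together with a $\square_{\omega_1}$-sequence, using coherence of the square to prevent any cofinal $\omega$-subalgebra from meeting $S$ stationarily. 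Handling this cofinality-$\omega$ construction cleanly, and checking the degenerate small cases where $\Delta_{<\kappa,\kappa}$ is vacuous, is the part I expect to require the most care.
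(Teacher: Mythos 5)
Your proof is correct and follows the same skeleton as the paper's: direction (i)$\Rightarrow$(ii) is delegated to Magidor--Shelah \cite{MS:free} (the paper cites the detailed proof in Fontanella--Hayut \cite[Proposition 2.4]{FH:delta}), and direction (ii)$\Rightarrow$(i) is routed through Jensen's Theorem \ref{th:E}, exactly as in the paper. Where you differ is in how you extract reflection at a point of \emph{uncountable} cofinality from Definition \ref{def:delta}: you force the order type of $A^*$ up to $\omega_1$ by an algebra with $\omega_1$ pairwise-distinct-valued unary operations, which obliges you to assume $\kappa>\omega_2$ and then to handle $\kappa=\omega_2$ by a separate construction (a non-reflecting stationary set together with a $\square_{\omega_1}$-sequence), which you flag as the part requiring the most care.

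That detour is unnecessary, and this is the one substantive criticism. If $\gamma$ is a limit ordinal with $\cf(\gamma)=\omega$, then every strictly increasing $\omega$-sequence cofinal in $\gamma$ is a club in $\gamma$ (it has no limit points below $\gamma$); hence a subset of $\gamma$ is stationary in $\gamma$ only if its complement is bounded, i.e.\ only if it contains a final segment of $\gamma$, and in particular contains successor ordinals. Since $S\cap A^*\subseteq E^\kappa_\omega$ consists of limit ordinals, clause (iii) of Definition \ref{def:delta} \emph{automatically} forces $\delta=\cf(\sup(A^*))$ to be uncountable: no auxiliary algebra is needed (the trivial algebra already yields a reflection point of uncountable cofinality, which is presumably why the paper can state (ii)$\Rightarrow$(i) as a bare ``observation''), and the same remark shows that $\Delta_{\omega_1,\omega_2}$ is outright false in $\ZFC$, its conclusion being unsatisfiable, so at $\kappa=\omega_2$ both (i) and (ii) fail and no $\square_{\omega_1}$-construction is required. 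The step you expected to be hardest is in fact trivial. (The same degeneracy shows that the instance $\lambda=\omega_1$ of $\Delta_{\lambda,\kappa}$ is unsatisfiable for every $\kappa$, so in the theorem $\Delta_{<\kappa,\kappa}$ must be read as quantifying over $\lambda\ge\omega_2$; this is the range your argument and Fontanella--Hayut's Proposition 2.4 actually use, and under that reading both your proof and the paper's are correct.)
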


\begin{proof}
(i)$\to$(ii) is implicit in Magidor and Shelah \cite{MS:free}, with a detailed proof in Fontanella and Hayut \cite[Proposition 2.4]{FH:delta}. (ii)$\to$(i) follows by Theorem \ref{th:E} and by the observation that $\Delta_{<\kappa,\kappa}$-reflection implies $\neg E(\kappa)$.\footnote{The article \cite{MS:free} allows $\lambda = \kappa$ in the definition of $\Delta$-reflection, but there are also versions with $\lambda< \kappa$, as in Fontanella and Hayut \cite{FH:delta}. The case $\lambda = \kappa$ is only relevant for weakly inaccessible $\kappa$ and hence unnecessary for successor cardinals which we discuss in this article (with the exception of  Theorem \ref{th:Ldelta}, but $\Delta_{<\kappa,\kappa}$ is sufficient here since it implies reflection for stationary subsets of $E^\kappa_\omega$).}
\end{proof}

The $\Delta_{\kappa,\kappa^+}$-reflection for a singular $\kappa$ implies abelian compactness of $\kappa^+$ by  \cite{MS:free}, and several other compactness principles, such as the countable color compactness  (Lambie-Hanson and Rinot \cite[Proposition 2.23]{LHR:numbers}) and  the collectionwise Hausdorff compactness (implicit in Shelah \cite{MR540629} and Magidor and Shelah \cite{MS:free}). Note that $\Delta_{\kappa,\kappa^+}$ holds whenever $\kappa$ is a singular limit with countable cofinality of strongly compact cardinals by \cite[Corollary 2.5]{FH:delta}, but it can also hold at small cardinals such as $\aleph_{\omega^2+1}$ or the successor of the least cardinal fixed point, which is the key ingredient in \cite{MS:free} to obtain a model where these cardinals are abelian compact.

It is worth observing that the countable color compactness and collectionwise Hausdorff compactness can---unlike the $\Delta_{<\kappa,\kappa}$-reflection and abelian compactness---hold at $\omega_2$, as we discussed in the context of $\FRP$ in Section \ref{sec:FRP}.

\subsection{Abelian groups}\label{sec:ab}

There is a natural notion of compactness related to abelian groups which emerged as an important concept during the analysis of Whitehead's Conjecture and the proof of its independence from $\ZFC$ by Shelah \cite{Shelah:abelian, Shelah:ab}.

\begin{definition}\label{def:freewise}
We say that an abelian group of size $\kappa\ge \omega_1$ is \emph{almost-free} if every abelian subgroup of size $<\kappa$ is free.  If every almost-free abelian group of size $\kappa$ is free, we say that $\kappa$ is \emph{abelian compact (for groups of size $\kappa$)}.\footnote{There does not seem to be an established terminology for this concept.} We say that $\kappa$ is \emph{fully abelian compact} if every abelian group of size $\ge \kappa$ is free if and only if every subgroup of size $<\kappa$ is free.
\end{definition}

We observe in Theorem \ref{th:Lab} that abelian compactness characterizes weakly compact cardinals in $L$. It is natural to ask about the possible range of abelian compactness if $V \neq L$. Note that in contrast to  other compactness principles we discuss, abelian compactness does not refer to finite substructures (every free group is infinite), and therefore the notion is strictly speaking meaningful only for uncountable groups. A weak analogy for countable groups is Pontriyagin's theorem that every countable abelian group is free if and only if all its finitely generated subgroups are free.

Here are some initial observations (see Fuchs \cite{L:inf} for more  details related to abelian groups in general and Eklof's and Mekler's excellent monograph \cite{modules} for set-theoretic connections):

\begin{theorem}\label{th:abinit}The following hold:
\bce[(i)]
\item A principles saying that ``a group $G$ is free if and only if  every finitely generated subgroup $H$ of $G$ is free''\footnote{Note that the implication from left to right is provable from  a weak form of $\AC$; see our discussion of this point in Section \ref{sec:omega}.} is false in $\ZFC$.
\item If $\kappa$ is weakly compact, then $\kappa$ is abelian compact.
\item If $\kappa$ is strongly compact, then $\kappa$ is fully abelian compact.
\ece
\end{theorem}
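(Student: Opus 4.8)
My plan is to treat the three items separately, handling (i) by an explicit counterexample and (ii), (iii) by the elementary‑embedding characterizations of the two large cardinals, which make the two compactness statements nearly parallel.

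\textbf{Part (i).} I would first observe that a finitely generated abelian group is free precisely when it is torsion‑free (structure theorem for finitely generated abelian groups). Hence the hypothesis ``every finitely generated subgroup of $G$ is free'' is equivalent to ``$G$ is torsion‑free,'' and the principle in (i) reduces to the false assertion that every torsion‑free abelian group is free. To refute it in $\ZFC$ I would take $G = \Q$: every finitely generated subgroup of $\Q$ lies inside some $\tfrac1n\Z \cong \Z$ and is therefore free, yet $\Q$ is not free, since any nonzero free abelian group $F$ has $F/2F \neq 0$ whereas $2\Q = \Q$. (The Baer–Specker group $\Z^{\omega}$ serves equally well.)

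\textbf{Part (ii).} Here I would use the embedding characterization of weak compactness (see \cite{KANbook}). Let $G$ be almost‑free of size $\kappa$ in the sense of Definition~\ref{def:freewise}, and identify its underlying set with $\kappa$. Choose a transitive $M \models \mathrm{ZF}^{-}$ with $|M| = \kappa$, ${}^{<\kappa}M \sub M$, and $\kappa, G \in M$. Weak compactness yields a transitive $N$ and an elementary $j : M \to N$ with $\mathrm{crit}(j) = \kappa$. Since $j$ fixes all ordinals below $\kappa$, the group operation of $j(G)$ agrees with that of $G$ on arguments $<\kappa$, so $G = \restr{j(G)}{\kappa}$ is genuinely a subgroup of $j(G)$ of size $\kappa < j(\kappa)$ inside $N$. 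By elementarity $j(G)$ is almost‑free in $N$ (every subgroup of size $<j(\kappa)$ is free), whence $G$ is free in $N$; as the existence of a basis is absolute (the spanning and independence conditions are about finite $\Z$‑linear combinations), $G$ is free in $V$.

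\textbf{Part (iii).} For strong compactness the embedding moves the elements of $G$, so the argument needs one extra step. Let $|G| = \lambda \ge \kappa$ and take $j : V \to M$ with $\mathrm{crit}(j) = \kappa$ together with some $s \in M$ such that $j[G] \sub s$ and $|s|^{M} < j(\kappa)$; this $\lambda$‑strong compactness is exactly what the fine $\kappa$‑complete ultrafilter on $\power_\kappa(\lambda)$ provides. Then $\restr{j}{G}$ is a group embedding, so $j[G] \cong G$ is a subgroup of $j(G)$. Letting $H \in M$ be the subgroup of $j(G)$ generated by $s \cap j(G)$, we have $j[G] \sub H$ and $|H|^{M} < j(\kappa)$, so by elementarity and almost‑freeness $H$ is free in $M$, hence free in $V$ by absoluteness of bases. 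Finally $j[G]$ is a subgroup of the free abelian group $H$, and every subgroup of a free abelian group is free (Nielsen–Schreier, Section~\ref{sec:omega}), so $G \cong j[G]$ is free.

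The one genuinely delicate point — and the main obstacle in (iii) — is that $j[G]$ need \emph{not} be an element of $M$, so one cannot apply $j(\kappa)$‑freeness to $j[G]$ directly; enclosing it in the $M$‑definable subgroup $H$ generated by $s \cap j(G)$ is precisely the device that repairs this, at the cost of the extra Nielsen–Schreier step that (ii) avoids (there $G$ itself already sits in $N$ as a small subgroup of $j(G)$).
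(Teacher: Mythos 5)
Your proposal is correct, and it differs from the paper mainly in being self-contained where the paper cites. For (i) the paper invokes the Baer--Higman construction of a non-free abelian group of size $\omega_1$ all of whose \emph{countable} subgroups are free; your counterexample $G=\Q$ is more elementary and fully sufficient, since the statement only requires finitely generated subgroups to be free ($\Q$ is torsion-free, so its finitely generated subgroups are cyclic, yet $\Q$ is divisible and hence not free). The paper's choice is thematic rather than necessary: the Baer--Higman group also witnesses that $\omega_1$ is not abelian compact, which matters for the surrounding discussion, but it is overkill for (i) as stated. For (ii) and (iii) the paper says only that these are easy observations from the large-cardinal properties and defers to Eklof--Mekler \cite[Chapter IV, Theorem 3.2]{modules}; your elementary-embedding proofs are a standard way of filling this in (Eklof--Mekler run the weakly compact case through $\Pi^1_1$-indescribability, an equivalent formulation of weak compactness), and you correctly isolate the one genuinely delicate point in (iii), namely that $j[G]\notin M$, repaired by generating a subgroup $H\in M$ from the covering set $s$ and then quoting that subgroups of free abelian groups are free. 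Two details you pass over quickly but which your hypotheses do support: in (ii), applying elementarity requires $M\models$ ``$G$ is almost-free,'' and this is exactly where ${}^{<\kappa}M\sub M$ is used (a basis of a subgroup of size $<\kappa$ is a ${<}\kappa$-sequence of ordinals, hence lies in $M$, making freeness and cardinality absolute between $M$ and $V$); and in (iii), ``fully abelian compact'' formally asks for both directions of the equivalence, the missing direction being precisely the subgroups-of-free-groups-are-free theorem you already invoke.
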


\begin{proof}
(i). Baer and Higman constructed in 1950's a non-free  abelian groups of size $\omega_1$ whose all countable subgroups are free.

(ii) and (iii). This is an easy observation using the properties of weakly compact or strongly compact cardinals. See \cite[Chapter IV, Theorem 3.2]{modules} for more details.
\end{proof}

\begin{theorem}\label{th:Lab} If $V = L$ and $\kappa$ is an uncountable regular cardinal, then the following are equivalent:
\bce[(i)]
\item $\kappa$ is weakly compact.
\item $\kappa$ is abelian compact.
\ece
\end{theorem}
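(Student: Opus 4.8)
The forward direction (i)$\to$(ii) needs no special hypothesis on the universe: it is precisely Theorem~\ref{th:abinit}(ii), which holds in $\ZFC$, so \emph{a fortiori} it holds under $V=L$. The substance of the theorem is therefore the converse, which I would establish in contrapositive form. Assuming that $\kappa$ is \emph{not} weakly compact, the plan is to produce an almost-free non-free abelian group of size $\kappa$, thereby witnessing that $\kappa$ fails to be abelian compact in the sense of Definition~\ref{def:freewise}.

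The first step extracts the combinatorial raw material, and it is the only place where $V=L$ is used. Since $\kappa$ is not weakly compact and $V=L$, Jensen's Theorem~\ref{th:E} furnishes a stationary non-reflecting set $S \sub E^\kappa_\omega$; that is, $S$ is stationary and $S\cap\alpha$ is non-stationary in $\alpha$ for every $\alpha<\kappa$ with $\cf(\alpha)>\omega$. Everything after this is a $\ZFC$ construction turning such an $S$ into an incompact group. For each $\alpha\in S$ I would fix an increasing sequence $\la \alpha_n \mid n<\omega \ra$ cofinal in $\alpha$, and then present $A$ by free generators $x_\beta$ ($\beta<\kappa$) together with auxiliary generators $y_{\alpha,n}$ ($\alpha\in S$, $n<\omega$) subject to the standard relations $p_n\, y_{\alpha,n+1} = y_{\alpha,n} + x_{\alpha_n}$ for a fixed sequence of primes $p_n$ (see Eklof--Mekler \cite{modules}). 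This is the $\kappa$-analogue of the Baer--Higman group of Theorem~\ref{th:abinit}(i); indeed for $\kappa=\omega_1$, where $E^{\omega_1}_\omega$ is vacuously non-reflecting, it reproduces it. Letting $A_\gamma$ be the subgroup generated by the $x_\beta$ with $\beta<\gamma$ and the $y_{\alpha,n}$ with $\alpha<\gamma$ yields a continuous increasing $\kappa$-filtration $\la A_\gamma \mid \gamma<\kappa \ra$ of $A$ whose set of non-free points $\{\gamma : A_{\gamma+1}/A_\gamma \text{ not free}\}$ coincides with $S$ modulo the club filter.

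Two standard freeness criteria for $\kappa$-filtered groups, both in \cite{modules}, then close the argument. First, a $\kappa$-filtered group is free if and only if its non-free invariant set is non-stationary; since $S$ is stationary, $A$ is not free. Second, $A$ is $\kappa$-free---which is exactly almost-freeness here---if and only if its invariant set does not reflect. Concretely, any subgroup $B\le A$ with $|B|<\kappa$ is contained in some $A_\gamma$ by regularity of $\kappa$, and $A_\gamma$ inherits a filtration whose non-free set is $S\cap\gamma$; non-reflection of $S$ makes $S\cap\gamma$ non-stationary in $\gamma$, so by the first criterion $A_\gamma$ is free and hence so is $B$. Thus $A$ is almost-free but not free, so $\kappa$ is not abelian compact, completing the contrapositive.

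The main obstacle I anticipate is the honest verification of the second criterion---that non-reflection of $S$ forces freeness of every initial segment $A_\gamma$. This requires the precise $\ZFC$ statement that a $\kappa$-filtered group with non-stationary invariant is free, together with the bookkeeping that the induced filtration on $A_\gamma$ has invariant exactly $S\cap\gamma$ and that passing to an arbitrary small subgroup $B\le A_\gamma$ does not introduce new non-freeness (i.e.\ the invariant is, up to club, an invariant of the group and not merely of the chosen filtration). These are exactly the points where the theory of the $\Gamma$-invariant does the work, and I would cite the relevant chapters of \cite{modules} rather than reprove them here.
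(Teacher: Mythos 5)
Your proposal is correct and takes essentially the same route as the paper: the forward direction is Theorem \ref{th:abinit}(ii), and the converse combines Jensen's Theorem \ref{th:E} with the standard Eklof--Mekler construction of an almost-free non-free abelian group of size $\kappa$ from a non-reflecting stationary subset of $E^\kappa_\omega$. The paper simply cites \cite[Section VII, Theorem 1.4]{modules} for that construction, whereas you sketch the ladder-system group and the $\Gamma$-invariant verification explicitly while deferring the same technical points to \cite{modules}.
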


\begin{proof}
The non-trivial direction (ii)$\to$(i) follows by Theorem \ref{th:E} and a construction of an almost-free non-free abelian group using a non-reflecting stationary subset of $E^\kappa_\omega$ (see \cite[Section VII, Theorem 1.4]{modules}).
\end{proof}

This characterization of weak compactness can fail in general: Magidor and Shelah showed in \cite{MS:free} that modulo large cardinals, there is a generic extension in which there can be non-weakly compact inaccessible cardinals, and also successor cardinals, which are abelian compact and even fully abelian-compact.

However, unlike the case of principles related to trees such as $\ISP_\kappa$, there is a considerable amount of $\ZFC$ restrictions regarding the possible extent of abelian compactness on small cardinals. Generalizing Baer's and Higman's result that there always exists an almost-free non-free group of size $\omega_1$, Magidor and Shelah  proved in \cite{MS:free} the following more extensive sufficient condition for the existence of almost-free non-free abelian groups:

\begin{lemma}\label{lm:MS}
Assume there exists an almost-free non-free abelian group of size $\delta$ for some regular $\delta \ge \omega$ (vacuously true for $\delta = \omega$). Let $C_\delta$ the closure of  $\{\delta\}$ under the operations $\lambda \mapsto \lambda^+$ and $(\lambda,\kappa) \mapsto  \lambda^{+\kappa+1}$. Then there is an almost-free non-free abelian group of size $\lambda$ for every $\lambda \in C_\delta$.
\end{lemma}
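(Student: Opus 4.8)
The plan is to induct on the generation of $C_\delta$, handling the two closure operations separately: the first is a routine application of the classical pump-up lemma, while the second---which passes through a successor of a singular cardinal---is the genuine obstacle and the heart of the matter.

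First I would record that every element of $C_\delta$ is a \emph{regular} cardinal: $\delta$ is regular by hypothesis, and both operations $\lambda \mapsto \lambda^+$ and $(\lambda,\kappa)\mapsto \lambda^{+\kappa+1} = (\lambda^{+\kappa})^+$ output successor cardinals, which are regular. Hence the closure carries a well-founded generation rank on which to induct, and at every stage we are building a group on a regular cardinal, exactly as the pump-up machinery requires. The induction hypothesis reads: every $\mu \in C_\delta$ already generated with $\mu > \omega$ carries an almost-free non-free abelian group (the case $\mu = \delta = \omega$ being the vacuous base, where no group exists but the constructions at the next level supply the first genuine ones).

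For $\lambda \mapsto \lambda^+$ I would invoke the classical pump-up lemma of Eklof--Gregory--Shelah (\cite[Theorem 2.3]{modules}). Since $\lambda$ is regular, the set $E^{\lambda^+}_\lambda = \lambda^+ \cap \cof(\lambda)$ is stationary, and it is automatically non-reflecting in $\ZFC$: a stationary set of points of cofinality $\lambda$ could only reflect at an ordinal of cofinality $>\lambda$, of which there are none below $\lambda^+$. Feeding this non-reflecting stationary subset of $\lambda^+ \cap \cof(\lambda)$ together with the almost-free non-free group of size $\lambda$ (supplied by the induction hypothesis) into the pump-up lemma yields one of size $\lambda^+$. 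When $\lambda = \delta = \omega$ the same set $E^{\omega_1}_\omega$ reconstructs the Baer--Higman group of Theorem \ref{th:abinit}(i), producing the first actual group at $\omega_1$.

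The hard step is $(\lambda,\kappa) \mapsto \nu := (\lambda^{+\kappa})^+$. Writing $\mu = \lambda^{+\kappa}$, a short computation gives that $\mu$ is singular with $\cf(\mu) = \cf(\kappa) = \kappa$, so $\nu$ is the successor of a singular cardinal. Here the naive pump-up is \emph{unavailable}: by Magidor's reflection theorem ($\SR(\aleph_{\omega+1})$ and its analogues) it is consistent that every stationary subset of $\nu$ reflects, so there need be no non-reflecting stationary set at $\nu$ at any cofinality, and one cannot appeal to \cite[Theorem 2.3]{modules}. Instead I would use Shelah's $\ZFC$ construction at successors of singulars: organize the increasing continuous cofinal chain $\seq{\lambda^{+\alpha}}{\alpha < \kappa}$ in $\mu$---whose successor terms $\lambda^{+(\alpha+1)}$ lie in $C_\delta$ and so carry almost-free non-free groups by induction---into a coherent $\kappa$-indexed system of free groups, using the almost-free non-free group of size $\kappa$ (again from the induction hypothesis, as $\kappa \in C_\delta$; for $\kappa = \omega$ the countable-cofinality template works unconditionally) as the combinatorial skeleton that propagates non-freeness to the top level. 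The direct limit is then almost-free of size $\nu$ but not free, with non-freeness witnessed by the singular structure rather than by a non-reflecting stationary set. Realizing incompactness at a successor of a singular in pure $\ZFC$ is the main obstacle and the substantive content of Magidor--Shelah \cite{MS:free}; verifying that the two operations exhaust $C_\delta$ and that almost-freeness is preserved at each step is then routine bookkeeping.
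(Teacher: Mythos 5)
Your handling of the successor operation is correct and is exactly the mechanism the paper intends: for regular $\lambda$ the set $E^{\lambda^+}_\lambda$ is a non-reflecting stationary subset of $\lambda^+\cap\cof(\lambda)$, the Eklof--Gregory--Shelah pump-up lemma (\cite[Theorem 2.3]{modules}) performs the step $\lambda\mapsto\lambda^+$, and the Baer--Higman construction covers the base $\omega\mapsto\omega_1$. You are also right---and here you are sharper than the paper's own one-sentence gloss, which attributes the whole proof to pump-up ``with appropriate non-reflecting stationary sets around''---that the second operation cannot be reduced to that pump-up: by Magidor \cite{M:sr} it is consistent that every stationary subset of $\aleph_{\omega+1}$ reflects, while the lemma is a $\ZFC$ theorem whose conclusion includes $\aleph_{\omega+1}\in C_\omega$, so the successor-of-singular step must rest on a different mechanism. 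The paper gives no details for that step; like you, it defers to \cite{MS:free}.

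The genuine gap is in your sketch of that hard step, and it is not mere vagueness: the claim that the successor terms $\lambda^{+(\alpha+1)}$, $\alpha<\kappa$, of the chain ``lie in $C_\delta$ and so carry almost-free non-free groups by induction'' is false, and your construction invokes it. Take $\delta=\omega$ and the instance $(\lambda,\kappa)=(\omega,\omega_1)$, so $\nu=\aleph_{\omega_1+1}$. The chain is $\seq{\aleph_\alpha}{\alpha<\omega_1}$, and its term $\aleph_{\omega^2+1}$ is \emph{not} in $C_\omega$ (no $\lambda',\kappa'\in C_\omega$ satisfy $\lambda'^{+\kappa'}=\aleph_{\omega^2}$), so the induction hypothesis says nothing about it; worse, by Theorem \ref{th:fullc}(i) it is consistent that $\aleph_{\omega^2+1}$ is abelian compact, while the lemma, being a theorem of $\ZFC$, must still produce an almost-free non-free group of size $\aleph_{\omega_1+1}$ in that very model. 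Hence no proof can work by threading incompact groups at the intermediate successor cardinals up the chain. The actual Magidor--Shelah argument uses only the incompact object of size $\kappa$ (in its transversal, $\NPT$, formulation), ladder systems attached to a stationary---possibly reflecting---subset of $\mu^+\cap\cof(\kappa)$ where $\mu=\lambda^{+\kappa}$, and the fact that $\mu$ has exactly $\kappa$ cardinals above $\lambda$; almost-freeness of the resulting object is then verified by induction along the cardinals below $\mu$, with singular compactness at the singular stages, rather than via non-reflection or via intermediate incompact groups. As written, your ``coherent $\kappa$-indexed system of free groups'' cannot be checked, and insofar as it leans on the intermediate induction hypotheses it is wrong.
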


The proof of this lemma is based on the fact that incompact abelian groups of certain sizes generate larger incompact abelian groups with appropriate non-reflecting stationary sets around.\footnote{For instance, as we already mentioned, Eklof, Gregory and Shelah (independently) showed that if $\kappa < \lambda $ are regular cardinals and there is a non-free almost abelian group of size $\kappa$ and there is a non-reflecting stationary subset $S \sub \lambda \cap \cof(\kappa)$, then there is a non-free almost free abelian group of size $\lambda$ (a ``pump-up lemma''). See \cite{modules}, Theorem 2.3.} 
Conversely, a strong form of stationary reflection, the $\Delta$-reflection which we reviewed in Section \ref{sec:delta}, is a sufficient condition for abelian compactness, by \cite{MS:free}.

Lemma \ref{lm:MS} applied with  $C_\omega$  implies  that there can be no abelian compact regular cardinal below $\aleph_{\omega^2}$ and  that there can be no fully abelian compact regular cardinal below the first fixed point of the $\aleph$ function. From the existence of infinitely many supercompact cardinals \cite{MS:free} produces models which are optimal with respect to the restrictions in Lemma \ref{lm:MS}: 

\begin{theorem} [\cite{MS:free}] \label{th:fullc}
Suppose there are infinitely many supercompact cardinals, then the following are consistent:
\bce[(i)]
\item $\aleph_{\omega^2+1}$ is abelian compact, i.e.\ any abelian group of size $\aleph_{\omega^2+1}$ which is almost-free is also free.
\item Suppose $\kappa$ is the least cardinal such that $\kappa = \aleph_\kappa$. Then $\kappa$ is fully abelian compact, i.e.\  if $G$ is an abelian group of size $\ge \kappa$ such that every subgroup of size $<\kappa$ is free, then $G$ itself is free.
\ece
\end{theorem}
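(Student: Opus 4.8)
The plan is to reduce both parts to a single instance (resp.\ a global family of instances) of $\Delta$-reflection at the target cardinal, and then to produce a model for that reflection by collapsing the given supercompact cardinals to the optimal positions dictated by Lemma \ref{lm:MS}. First I would record the reduction: by the results of Magidor and Shelah discussed in Section \ref{sec:delta}, $\Delta_{\mu,\mu^+}$-reflection for a singular $\mu$ implies that $\mu^+$ is abelian compact, and an analogous argument from \cite{MS:free} shows that the global form $\Delta_\kappa$-reflection (i.e.\ $\Delta_{\kappa,\nu}$ for all regular $\nu>\kappa$) implies that $\kappa$ is fully abelian compact. Thus for (i) it suffices to arrange $\Delta_{\aleph_{\omega^2},\aleph_{\omega^2+1}}$ in the extension, noting that $\aleph_{\omega^2}$ is singular of cofinality $\omega$; and for (ii) it suffices to arrange the global $\Delta_\kappa$-reflection at the least fixed point $\kappa=\aleph_\kappa$, which is also of cofinality $\omega$.

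Next I would set up the collapse. Let $\langle \kappa_n : n<\omega\rangle$ be an increasing sequence of supercompact cardinals with supremum $\kappa_\omega$, and perform an iterated Levy collapse (with Easton support) that preserves each $\kappa_n$ as a regular cardinal while collapsing the intervals below and between them. The goal is that in $V[G]$ each $\kappa_n$ lands at a successor index $\aleph_{\omega n+1}$, so that $\kappa_\omega=\aleph_{\omega^2}$ for part (i); for part (ii) the same kind of iteration is arranged so that $\kappa_\omega$ becomes the least cardinal fixed point $\aleph_{\kappa_\omega}$. The bookkeeping here is governed precisely by the closure operations in Lemma \ref{lm:MS}: the index $\omega^2$ (resp.\ the first fixed point) is exactly where the obstruction produced by applying that lemma to $C_\omega$ first disappears, so these are the optimal targets, and no smaller cardinal can be (fully) abelian compact since below them almost-free non-free abelian groups exist in $\ZFC$.

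The heart of the argument, and the step I expect to be hardest, is showing that $\Delta$-reflection survives the collapse. Given a stationary $S\subseteq\{\alpha<\kappa_\omega^+ : \cf(\alpha)<\lambda\}$ and an algebra $A$ on $\kappa_\omega^+$ with fewer than $\lambda$ operations in $V[G]$, I would lift a suitable ground-model supercompactness embedding $j_n : V\to M_n$ with critical point $\kappa_n$ through the collapse forcing, and then use the images of $S$ and $A$ in the target model to locate a subalgebra $A^*$ whose order type $\delta$ is one of the collapsed $\kappa_n=\aleph_{\omega n+1}$, hence a regular cardinal below $\lambda=\kappa_\omega$, with $S\cap A^*$ stationary in $\sup(A^*)$. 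The delicate points are the standard ones for lifting arguments: verifying that the iteration factors so that $j_n$ lifts, and controlling the cofinality requirements ($\delta$ regular, $\delta<\lambda$) through the collapse. For part (ii) the additional difficulty is carrying this out uniformly for all regular $\nu>\kappa$, so as to obtain the global reflection needed for full compactness rather than a single instance. This uniform lifting-and-reflection analysis is exactly the technical core of \cite{MS:free}, to which I would defer for the remaining routine verifications.
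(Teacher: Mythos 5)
Your proposal follows essentially the same route as the paper, which itself gives no independent proof but cites \cite{MS:free} and explains (in Section \ref{sec:delta} and the remark after the theorem) that the result is obtained by deriving abelian compactness from $\Delta$-reflection and arranging $\Delta_{\aleph_{\omega^2},\aleph_{\omega^2+1}}$ (resp.\ its global form at the least fixed point) by Levy-collapsing $\omega$-many supercompacts so that their supremum becomes $\aleph_{\omega^2}$, with the reflection verified by lifting the supercompactness embeddings through the collapse. Your sketch is a faithful reconstruction of that argument, with only cosmetic slips (the exact $\aleph$-indices where the $\kappa_n$ land, and the fact that the reflecting order type $\delta$ is produced by elementarity of the lifted embedding rather than being literally one of the $\kappa_n$), so there is nothing substantive to separate the two approaches.
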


It follows that modulo large cardinals $\ZFC$ cannot prove the existence of an almost-free non-free abelian group above the least cardinal fixed point.

It is of separate interest that Magidor and Shelah reformulated the compactness principle dealing with almost-free abelian groups in terms of a more general concept of \emph{transversals}. 

Recall that $f$ is a \emph{transversal} for a set $A$ if $f$ is an injective choice function on $A$.

\begin{definition}\label{def:PT}
Suppose $\omega_1 \le \lambda < \kappa$ are infinite cardinals. We say that $\PT(\kappa,\lambda)$ holds if for every set $A$ of size $\kappa$ such that every $a \in A$ has size $<\lambda$, if every subset $X \sub A$ of size $<\kappa$ has a transversal, so does the whole set $A$. We denote $\neg \PT(\kappa,\lambda)$ by $\NPT(\kappa,\lambda)$.
\end{definition}

In \cite{Shelah:inc} Shelah showed that $\PT(\kappa,\omega_1)$ is equivalent to the property that every almost-free abelian group of size $\kappa$ is free. 

\brm
As we already mentioned, the proof of Theorem \ref{th:fullc} is carried out in a more general setting of $\Delta$-reflection which we discussed in Section \ref{sec:delta}, which implies other compactness principles apart from abelian compactness, and is therefore of separate interest.
\erm

\brm \label{rm:W}
The notion of abelian compactness is connected to research of Shelah and others into the Whitehead problem in abelian group theory (see Section \ref{sec:sep} for a few more details on the Whitehead problem). For the context of this section, we observe that if $\kappa$ is abelian compact and every Whitehead group of size $<\kappa$ is free, then by abelian compactness of $\kappa$ every Whitehead group of size $\kappa$ must be free as well. This provides a form compactness not only for the property of being free, but also for the more general property of being Whitehead. However, Shelah showed that these two notions of compactness can consistently diverge: by   \cite{MR1998104} it is consistent that $\kappa$ is strongly inaccessible, $\GCH$ holds and $\ast_\kappa$ holds: 
\begin{itemize}
\item[$\ast_\kappa$] Every Whitehead group of size $<\kappa$ is free + every almost-free abelian group of size $\kappa$ is Whitehead + $\kappa$ is \emph{not} abelian compact.
\end{itemize}

By a follow-up result in \cite{MR2886755}, the least $\kappa$ satisfying $\ast_\kappa$ cannot be accessible.
\erm

\brm
The \label{rm:R3} study of the Whitehead problem also motivated Shelah's famous result that singular cardinals are provably in $\ZFC$ compact for a range of principles. In particular singular cardinals are provably abelian compact. This result shows there is a sharp distinction between compactness at regular cardinals (which we study in this article) and singular cardinals. See Shelah \cite{Shelah:ab} for more details.
\erm

\subsection{Graphs and trees} \label{sec:gr}

\subsubsection{Compactness for the coloring number}\label{sec:color}

Suppose $\mathcal G = (G,E)$ is an (undirected) graph and $<$ some fixed well-order on $G$. The neighborhood $N^<_{\mathcal G}(x)$ of a vertex $x \in G$ with respect to $<$ is defined by $N^<_{\mathcal G}(x) = \set{y}{\{x,y\} \in E \mbox{ and }y < x}$. 

\begin{definition}\label{def:color}
The \emph{coloring number} number of $\mathcal G$,  $\Color(\mathcal G) = \chi$, is defined to be the least cardinal $\chi$ such that there is a well-order $<$ on $G$ such that $|N^<_{\mathcal G}(x)|<\chi$ for all $x \in G$.
\end{definition} 

The notion of the coloring number can be seen as a more constructive version of the usual chromatic number of a graph $\mathcal G = (G,E)$: a function $c:G \to \chi$ is called a chromatic coloring of $\mathcal G$ if $\{x,y\} \in E$ implies $c(x) \neq c(y)$ for all $x,y \in G$. 

\begin{definition}\label{def:chrom}
The \emph{chromatic number} of $\mathcal G$, $\Chr(\mathcal{G})$, is defined as the least cardinal $\chi$ such that there is a chromatic coloring with range $\chi$.
\end{definition}

The well-order $<$ in the definition of colorwise compactness provides an explicit construction of a chromatic function with small domain:

\begin{lemma}
Suppose $\mathcal G = (G,E)$ is a graph. Then $\Chr(\mathcal G)  \le \Color(\mathcal G)$.
\end{lemma}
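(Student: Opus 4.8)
The plan is to show that the well-order witnessing the coloring number directly yields a chromatic coloring using the same number of colors. Let $\chi = \Color(\mathcal G)$, and fix a well-order $<$ on $G$ such that $|N^<_{\mathcal G}(x)| < \chi$ for every $x \in G$. I will define a chromatic coloring $c : G \to \chi$ by transfinite recursion along $<$: having colored all vertices $<$-below $x$, I assign $c(x)$ to be the least ordinal in $\chi$ not already used on the neighborhood $N^<_{\mathcal G}(x)$.

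The key point is that this recursion never runs out of colors. At each stage $x$, the forbidden colors are exactly $\set{c(y)}{y \in N^<_{\mathcal G}(x)}$, a set of size at most $|N^<_{\mathcal G}(x)| < \chi$. Since $\chi$ is a cardinal (it is the value of $\Color(\mathcal G)$, hence an infinite cardinal or a finite number, and in either case a set of fewer than $\chi$ ordinals cannot exhaust $\chi$), there is always a least available color below $\chi$, so $c(x)$ is well-defined. Thus $c$ is a total function $G \to \chi$.

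It remains to verify that $c$ is a genuine chromatic coloring, i.e.\ that $\{x,y\} \in E$ implies $c(x) \neq c(y)$. Given an edge $\{x,y\}$ with, without loss of generality, $y < x$, the definition of the neighborhood gives $y \in N^<_{\mathcal G}(x)$. By construction $c(x)$ was chosen to avoid all colors on $N^<_{\mathcal G}(x)$, so $c(x) \neq c(y)$. Hence $c$ is a chromatic coloring with range contained in $\chi$, which shows $\Chr(\mathcal G) \le \chi = \Color(\mathcal G)$.

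There is no real obstacle here; the argument is the standard greedy-coloring construction. The only point requiring a small amount of care is the cardinal-arithmetic observation that fewer than $\chi$ many forbidden colors leave at least one color available below $\chi$, which is immediate once one notes that $\Color(\mathcal G)$ is by definition a cardinal. One should also remark that the transfinite recursion is legitimate because $<$ is a well-order, so the recursion theorem applies and $c$ is unambiguously specified at every stage.
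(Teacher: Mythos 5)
Your proof is correct and takes exactly the same route as the paper's: a greedy transfinite recursion along the witnessing well-order, assigning each vertex the least color not used on its $<$-neighborhood. You in fact spell out one detail the paper leaves implicit (that fewer than $\chi$ forbidden colors cannot exhaust the cardinal $\chi$), which is a fine addition.
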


\begin{proof}
Suppose $<$ is a well-order of $G$ which witnesses $\Color(\mathcal G) = \chi$, i.e.\ $|N^<_{\mathcal G}(x)|<\chi$ for all $x \in G$, where $N^<_{\mathcal G}(x) = \set{y}{\{x,y\} \in E \mbox{ and }y < x}$. Let $\seq{v_\alpha}{\alpha < \gamma}$ be the enumeration of $G$ given by $<$, for some $\kappa \le \gamma< \kappa^+$. We will define chromatic coloring $c:G \to \chi$ by induction on $\gamma$. Suppose $\restr{c}{\alpha}$ is defined. Define $c(v_\alpha)$ to be the least $\xi < \chi$ in $$\chi \setminus \set{c(v_\beta)}{v_\beta \in N^{<}_{\mathcal G}(v_\alpha)}.$$ It is clear that $c$ is a chromatic coloring: if $\{x,y\} \in E$ and $y < x$, then $c(y) \neq c(x)$ by the construction.
\end{proof}

The coloring number can be used to define a notion of compactness of graphs. We will give a specific definition of countable coloring compactness in analogy of countable chromatic compactness which will review in the next section (see Definition \ref{def:chcompact}). For more details about the notions of coloring and chromatic numbers with respect to compactness see Lambie-Hanson and Rinot \cite{LHR:numbers}.

\begin{definition}\label{def:ccompact}
Suppose $\kappa$ is a regular uncountable cardinal. We say $\kappa$ is \emph{countably coloring compact (for graphs of size $\kappa$)} if for every graph $\mathcal G = (G,E)$ of size $\kappa$, if every subgraph $\mathcal G'$ of $\mathcal G$ with $|G'| < |G|$ satisfies $\Color(\mathcal G') \le \omega$, then also $\Color(\mathcal G) \le \omega$.
\end{definition}

As we mentioned in Section \ref{sec:FRP}, this definition can be equivalently expressed in terms of the Fodor-type Reflection Principle \cite[Theorem 3.1]{more_frp} (without the restriction on the size of $G$).

Coloring compactness  provides a characterization of weak compactness in $L$:

\begin{theorem}\label{th:Lcolor}
If $V = L$ and $\kappa$ is an uncountable regular cardinal, then the following are equivalent:
\bce[(i)]
\item $\kappa$ is weakly compact,
\item $\kappa$ is countably coloring compact.
\ece
\end{theorem}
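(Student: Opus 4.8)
I would prove the two implications separately, stressing that (i)$\to$(ii) is a theorem of $\ZFC$, so that the hypothesis $V=L$ is only needed for the converse. The whole content of the equivalence therefore lies in the contrapositive of (ii)$\to$(i), which is exactly where Jensen's dichotomy in $L$ (Theorem \ref{th:E}, or equivalently $\square(\kappa)$ via Theorem \ref{th:sq}) enters.

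For (i)$\to$(ii) I would use the elementary-embedding characterization of weak compactness. Suppose $\mathcal G=(G,E)$ is a graph on the vertex set $\kappa$, all of whose subgraphs of size $<\kappa$ have coloring number $\le\omega$. Pick $M\el H(\theta)$ with $|M|=\kappa$, $\kappa+1\sub M$, ${}^{<\kappa}M\sub M$ and $\mathcal G\in M$; by elementarity $M$ believes that every subgraph of $\mathcal G$ of size $<\kappa$ satisfies $\Color\le\omega$. After transitive collapse (which fixes $\kappa$ and $\mathcal G$ since $\kappa\sub M$) the embedding characterization yields a transitive $N$ and an elementary $j:M\to N$ with critical point $\kappa$. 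As $j$ fixes ordinals below $\kappa$, we have $\restr{j(\mathcal G)}{\kappa}=\mathcal G$, so $\mathcal G$ is a subgraph of $j(\mathcal G)$ of size $\kappa<j(\kappa)$. By elementarity $N$ believes every subgraph of $j(\mathcal G)$ of size $<j(\kappa)$ has coloring number $\le\omega$; applied to $\mathcal G$ this gives a well-order $W\in N$ of $\kappa$ in which every vertex has only finitely many $W$-smaller neighbours. Finiteness and well-foundedness are absolute between the transitive $N$ and $V$, so $W$ witnesses $\Color(\mathcal G)\le\omega$ in $V$, and $\kappa$ is countably coloring compact.

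For (ii)$\to$(i) I would argue the contrapositive in $L$. Assuming $\kappa$ is not weakly compact, Theorem \ref{th:E} supplies a non-reflecting stationary $S\sub E^\kappa_\omega$ (and Theorem \ref{th:sq} a $\square(\kappa)$-sequence), and from such an object, together with a ladder system $\langle\eta_\alpha:\alpha\in S\rangle$ with each $\eta_\alpha$ an increasing $\omega$-sequence cofinal in $\alpha$, I would build a graph $\mathcal G$ on $\kappa$ whose coloring number is incompact. The edges are arranged so that the ladders force back-neighbours to pile up at limit points of $S$ under any well-order, while the non-reflection of $S$ (or coherence of the $\square(\kappa)$-sequence) lets every bounded piece be untangled. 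For the whole graph one shows $\Color(\mathcal G)>\omega$: a well-order with all predecessor-neighbourhoods finite would, using the coherence of the sequence together with a pressing-down argument on the stationary set $S$, produce a single vertex receiving infinitely many $W$-smaller neighbours, a contradiction.

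The crux, and the step I expect to be the main obstacle, is verifying that every subgraph of size $<\kappa$ has coloring number $\le\omega$. Here one uses that $\kappa$ is regular, so any vertex set $A$ with $|A|<\kappa$ is bounded, say $A\sub\gamma$ with $\gamma<\kappa$; since $S$ does not reflect, $S\cap\gamma$ is non-stationary in $\gamma$ whenever $\cf(\gamma)>\omega$, so there is a club $C_\gamma\sub\gamma$ disjoint from $S$. Decomposing $\gamma$ along $C_\gamma$ into blocks containing no limit points of $S$, one well-orders $A$ block by block so that the ladder edges contribute only finitely many predecessors to each vertex. Making this book-keeping precise, uniform across $\gamma$, and correctly handling points of countable cofinality is the delicate part. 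Rather than re-derive everything from scratch, I would either invoke the coloring-number incompactness machinery of Lambie-Hanson and Rinot \cite{LHR:numbers} or transcribe the parallel chromatic construction underlying Theorem \ref{th:Lg} and \cite{more_frp}, adapting the verification to $\Color$ in place of $\Chr$.
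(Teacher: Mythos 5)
Your proposal is correct and takes essentially the same route as the paper: the paper also reduces the nontrivial direction (ii)$\to$(i) to Theorem \ref{th:E} plus Shelah's construction of countably coloring incompact graphs from non-reflecting stationary subsets of $E^\kappa_\omega$, citing Lambie-Hanson and Rinot \cite[Theorem 2.17]{LHR:numbers} for the details---precisely the fallback you invoke. The paper treats (i)$\to$(ii) as immediate, so your elementary-embedding argument simply makes explicit what the paper leaves unstated.
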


\begin{proof}
The non-trivial direction (ii)$\to$(i) follows from a result of Shelah that non-reflecting stationary sets ensured by the principle $E(\kappa)$ from Theorem \ref{th:E} yield countably coloring incompact graphs. See Lambie-Hanson and Rinot \cite[Theorem 2.17]{LHR:numbers} for a detailed proof (in a more general setting).
\end{proof}

\brm \label{rm:M}
Note that $\omega_2$ can be countably coloring compact, and a Mahlo cardinal is the optimal consistency strength. See Miyamoto \cite{frp_consistency} for more details.
\erm

It is known that $\Delta_{<\kappa,\kappa}$-reflection implies countable coloring compactness, see Lambie-Hanson and Rinot \cite[Proposition 2.23]{LHR:numbers} (it is not known whether it implies countable chromatic compactness as well, see next Section \ref{sec:graphs}). In particular, Theorem \ref{th:fullc} implies that modulo large cardinals $\ZFC$ cannot prove that there are coloring incompact graphs above the first cardinal fixed point.

\subsubsection{Chromatic compactness, Rado's Conjecture, and transfer principles}\label{sec:graphs}

Unlike the coloring number of graphs which we discussed above, the more familiar notion of chromatic compactness appears to be harder to analyse. In analogy with Definition \ref{def:ccompact}, we define (following Todor{\v c}evi{\'c} \cite{Tdich}):

\begin{definition}\label{def:chcompact}
Suppose $\kappa$ is a regular uncountable cardinal. We say $\kappa$ is \emph{countably chromatically compact (for graphs of size $\kappa$)} if for every graph $\mathcal G = (G,E)$ of size $\kappa$, if every subgraph $\mathcal G'$ of $\mathcal G$ with $|G'| < |G|$ satisfies $\Chr(\mathcal G') \le \omega$, 
then also $\Chr(\mathcal G) \le \omega$.\footnote{\label{R8} A generalization of this property without the restriction on the size of $G$ is a consequence of $\kappa$ being an $\omega_1$-strongly compact cardinal, see Bagaria and Magidor \cite{MR3226024}. However, it is in general open whether the converse is true, i.e., whether countable chromatic compactness of $\kappa$ (even for an inaccessible $\kappa$) implies that $\kappa$ is weakly compact (for graphs of size $\kappa$) or $\omega_1$-strongly compact (for unlimited size of graphs). Compare with Theorem \ref{th:Lg} formulated for $V = L$.}
\end{definition}

It is easy to observe that if $\kappa$ is weakly compact, then every $\mathcal G$ of size $\kappa$ is countably chromatically compact, and if $\kappa$ is strongly compact, then this holds for all graphs $\ge \kappa$. Countably chromatically incompact graphs can be constructed by means of non-reflecting stationary subsets of $E^\kappa_\omega$, and thus provide a characterization of weak compactness of $\kappa$ in $L$. 

\begin{theorem}[Shelah]\label{th:Lg} If $V = L$, then the following are equivalent for an uncountable regular $\kappa$:
\bce[(i)]
\item $\kappa$ is weakly compact.
\item $\kappa$ is countably chromatically compact.
\ece
\end{theorem}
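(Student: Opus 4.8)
The plan is to prove the two implications separately, with the direction (i)$\to$(ii) being routine and the direction (ii)$\to$(i) carrying all the weight through a construction of Shelah. For (i)$\to$(ii) I would argue exactly as for weak compactness of $L_{\kappa,\kappa}$: given a graph $\mathcal{G} = (G,E)$ of size $\kappa$, introduce a unary predicate $P_n$ for each $n<\omega$ and let $T$ be the $L_{\kappa,\kappa}$-theory asserting that the $P_n$ partition $G$ (each vertex lies in exactly one $P_n$) together with, for every edge $\{x,y\}\in E$, the sentence $\bigwedge_{n<\omega}\neg(P_n(x)\wedge P_n(y))$. A model of $T$ is precisely a proper $\omega$-coloring of $\mathcal{G}$. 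Since $|G|=\kappa$ we have $|T|\le\kappa$, and any subtheory of size $<\kappa$ mentions fewer than $\kappa$ vertices, hence is satisfiable because the corresponding subgraph has size $<\kappa$ and so by hypothesis has a proper $\omega$-coloring. Weak compactness of $\kappa$ then yields a model of all of $T$, i.e.\ $\Chr(\mathcal{G})\le\omega$.

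For (ii)$\to$(i) I would argue by contraposition. Assume $\kappa$ is not weakly compact; since $V=L$, Theorem \ref{th:E} gives a non-reflecting stationary set $S\sub E^\kappa_\omega$. The goal is to manufacture from $S$ a graph $\mathcal{G}$ on $\kappa$ that is \emph{chromatically incompact}: $\Chr(\mathcal{G})>\omega$ while $\Chr(\restr{\mathcal{G}}{\gamma})\le\omega$ for every $\gamma<\kappa$. (Using that $\kappa$ is regular, every subset of $\kappa$ of size $<\kappa$ is bounded, so it suffices to color the initial segments $\restr{\mathcal{G}}{\gamma}$.) For each $\delta\in S$ fix an increasing sequence $\seq{\delta(n)}{n<\omega}$ cofinal in $\delta$, and use these to attach, along $S$, gadgets that force arbitrarily large finite chromatic number and whose overlap pattern is governed by $S$. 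The two things to verify are that (a) any putative proper coloring $c\colon\kappa\to\omega$ can be defeated by a pressing-down argument on $S$: one extracts from $c$ a regressive function on a stationary subset of $S$, finds a monochromatic configuration, and contradicts properness; and (b) below any $\gamma$ of uncountable cofinality, non-reflection supplies a club $C_\gamma\sub\gamma$ disjoint from $S$, which stratifies $\gamma$ into blocks that are essentially independent in $\mathcal{G}$, so that each block can be colored with $\omega$ colors and the colors reused across blocks, giving a proper $\omega$-coloring of $\restr{\mathcal{G}}{\gamma}$.

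The main obstacle is precisely the construction in the previous paragraph, and specifically the fact that we need the \emph{chromatic} number (not merely the coloring number) of the whole graph to be uncountable. Because $\Chr(\mathcal{G})\le\Color(\mathcal{G})$ always holds, driving the coloring number up (as in the analogous Theorem \ref{th:Lcolor}, where a non-reflecting stationary set suffices rather directly) is much easier than driving the chromatic number up: for the latter the gadgets must contain genuine obstructions---odd cycles of growing length, or complete-bipartite-type pieces---yet remain sparse enough that the block decomposition in (b) leaves each block graph-independent. Balancing these two requirements is the delicate heart of Shelah's argument, and I would import it rather than reprove it, citing Shelah's incompactness construction together with the detailed modern treatment of the coloring analogue in Lambie-Hanson and Rinot \cite{LHR:numbers}. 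Once the incompact graph is in hand, the contrapositive is complete and, together with (i)$\to$(ii), gives the stated equivalence.
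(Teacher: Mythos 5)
Your proposal is correct and follows essentially the same route as the paper: the easy direction is the standard $L_{\kappa,\kappa}$-compactness argument (which the paper records as an earlier observation), and the non-trivial direction is obtained by contraposition from Theorem \ref{th:E} together with Shelah's theorem that $E(\kappa)$ yields a countably chromatically incompact graph of size $\kappa$, imported by citation exactly as in the paper's proof. Your surrounding sketch of how Shelah's construction works is not load-bearing, so treating it as a black box is legitimate and matches the paper.
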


\begin{proof}
The non-trivial direction follows for instance from Theorem \ref{th:E} and the result of Shelah in \cite{Shelah:graph} which proves that $E(\kappa)$ implies that there is a countably chromatically incompact graph $\mathcal G$ of size $\kappa$.
\end{proof}

However, unlike the countable coloring compactness, it is open whether there can be a countably chromatically compact cardinal $\kappa$ which is not weakly compact. At the moment, all that is known that any such $\kappa$ must be greater or equal to $\beth_\omega$ (see Todor{\v c}evi{\'c} \cite{Tdich} for a survey of the topic).

\brm \label{rm:R2}
There are some provable differences between the chromatic and color compactness. While it is possible for the chromatic compactness  to fail with an arbitrarily large gap, see Shelah \cite{Sh:347} and Rinot \cite{paper12}, the incompactness of the coloring number is limited to a gap of no more than two cardinals, see Lambie-Hanson and Rinot \cite[Corollary 2.18]{LHR:numbers}.
\erm

There are some partial results for smaller classes of graphs. We state a few examples here and refer the reader to more details and examples in Lambie-Hanson and Rinot \cite{LHR:numbers}.

Galvin's and Rado's conjectures were originally formulated for graphs, but can be equivalently stated in the language of partial orders and trees:

\begin{itemize}
\item Galvin's Conjecture; in the language of graphs related to chromatic numbers of incomparability graphs.\footnote{Graphs of the form $(G,E)$ where $G$ is the domain of a partially ordered set $(P,<)$ (or more generally a quasi-ordered set) and $\{x,y\} \in E$ iff $x,y$ are incomparable in $<$.} It is consistent that for any partially ordered set $P$, $P$ can be decomposed into countably many chains (sets of pairwise comparable elements under the ordering of $P$) if and only if every suborder $P'$ of size $\le \omega_1$ can be decomposed into countably many chains.

\item Rado's Conjecture, $\RC$; in the language of graphs related to chromatic numbers of interval graphs.\footnote{Graphs of the form $(G,E)$ where $G$ is the set of intervals (or more generally convex sets) of some linearly ordered set $(L,<)$ and $\{I,J\} \in E$ iff $I \cap J \neq \emptyset$.} It is consistent that for any tree $T$, $T$ can be decomposed into countably many antichains (sets of pairwise incomparable nodes in the tree ordering) if and only if every subtree $T'$ of size $\le \omega_1$ can be decomposed into countably many antichains.\footnote{Notice that Rado's Conjecture is interesting only for trees $T$ of height $\omega_1$ without cofinal branches: Since every level of a tree is an antichain, all trees with a countable height are trivially decomposable into countable many antichains. On the other hand, no tree with uncountable chains (branches) can be decomposed into countably many antichains.}
\end{itemize}

Galvin's Conjecture is still open. Todor{\v c}evi{\'c} showed in \cite{T:Rado}  that $\RC$ is relatively consistent with the existence of a strongly compact cardinal:  if a strongly compact cardinal $\kappa$ is turned into $\omega_2$ using Levy collapse, then $\RC + \CH$ holds in the resulting model. Zhang \cite{MR4094551} elaborated on Todor{\v c}evi{\'c}'s observation that $\RC$ holds in the Mitchell model and showed that $\RC + \TP_{\omega_2}$ is consistent from a strongly compact cardinal.

It is known that $\RC$ contradicts forcing axioms, which makes it rather exceptional because it provides an alternative to $\MM$, as we will briefly discuss in Section \ref{sec:uni}. It implies, among other things, $2^\omega \le \omega_2$, the failure of $\square(\kappa)$ for any regular $\kappa \ge \omega_2$, and hence $\AD^{L(\R)}$, and also the Strong Chang's Conjecture and Weak Reflection Principle $\WRP(\omega_2)$. See \cite{Tdich} for more details and references and Section \ref{sec:uni} for a comparison of  theories $\ZFC + \MM$ and $\ZFC + \RC + 2^\omega = \omega_2$ as two examples of unification of logical and mathematical principles.

Let us define a more detailed notion for Rado's Conjecture.

\begin{definition} \label{def:RC}
We say that Rado's Conjecture holds for a cardinal $\kappa$ and write $\RC(\kappa)$ if every tree $T$ of size $\kappa$ can be decomposed into countably many antichains if and only if every subtree $T'$ of size $< \kappa$ can be decomposed into countably many antichains.\footnote{See Footnote \ref{ft:RC3} for a three-parameter version of $\RC$.}
\end{definition}

\begin{theorem}
[Todor{\v c}evi{\'c}]\label{th:RC} If $V = L$, then the following are equivalent for an uncountable regular $\kappa$:
\bce[(i)]
\item $\kappa$ is weakly compact.
\item $\RC(\kappa)$.
\ece
\end{theorem}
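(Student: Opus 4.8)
The plan is to prove the two implications separately, noting that (i)$\to$(ii) actually holds in $\ZFC$ and that only the converse uses $V=L$. Throughout I call a tree \emph{special} if it is a countable union of antichains.

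\textbf{The direction (i)$\to$(ii).} The implication ``$T$ special $\Rightarrow$ every subtree special'' is trivial, since an antichain of $T$ meets any subtree in an antichain; so the content of $\RC(\kappa)$ is the converse, and I would extract it directly from the defining property of a weakly compact $\kappa$, namely $L_{\kappa,\kappa}$-compactness for theories of size $\le\kappa$. Given a tree $T$ of size $\kappa$ all of whose subtrees of size $<\kappa$ are special, work in the language with a constant $c_t$ for each $t\in T$ and unary predicates $\{P_n : n<\omega\}$ (read ``$x$ has colour $n$''), and take as axioms $\bigvee_{n<\omega}P_n(c_t)$ together with $\bigwedge_{n<m<\omega}\neg(P_n(c_t)\wedge P_m(c_t))$ for each $t\in T$, and $\bigwedge_{n<\omega}\neg(P_n(c_s)\wedge P_n(c_t))$ for each comparable pair $s<_T t$. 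Each axiom has length $\omega<\kappa$ and the theory has size $\le\kappa$. A subtheory of size $<\kappa$ mentions only the constants of a subtree of size $<\kappa$, which by hypothesis is special; the associated colouring is a model of that subtheory. By $L_{\kappa,\kappa}$-compactness the whole theory has a model, i.e.\ a decomposition of $T$ into countably many antichains, giving $\RC(\kappa)$. This step uses weak compactness only, not $V=L$.

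\textbf{The direction (ii)$\to$(i).} I would argue the contrapositive in $L$. By Theorem \ref{th:E} (equivalently Theorem \ref{th:sq}), if $\kappa$ is not weakly compact then it carries a non-reflecting stationary set $S\subseteq E^\kappa_\omega$ (equivalently a $\square(\kappa)$-sequence), and the goal is to manufacture from $S$ a tree $T$ of size $\kappa$ that is non-special while every subtree of size $<\kappa$ is special, thereby witnessing $\neg\RC(\kappa)$. Here the two elementary remarks recorded in the footnote to Definition \ref{def:RC} fix the shape of the target: a tree of countable height is trivially special (use its levels) and a tree with an uncountable chain is never special, so the only relevant objects have height $\omega_1$, all branches countable, and width $\kappa$. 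Thus $T$ must be an ``almost-special but non-special'' tree whose failure of specialness is a global, size-$\kappa$ phenomenon that evaporates in every fragment of size $<\kappa$. I would build $T$ by transfinite recursion along $\kappa$ using $S$, by the same mechanism that yields almost-free non-free abelian groups (Theorem \ref{th:Lab}) and countably chromatically incompact graphs (Theorem \ref{th:Lg}) from $E(\kappa)$: arrange the tree so that a single specializing function $f:T\to\omega$ would have to resolve, at each point of $S$, an obstruction of countable-cofinality type. Since $S$ is stationary no global $f$ can succeed, so $T$ is non-special; since $S$ is non-reflecting, a subtree of size $<\kappa$ inherits only a non-stationary trace of $S$ below a suitable reflection point of uncountable cofinality, which lets one thread its partial specializing functions past the obstructed part and assemble an actual decomposition into countably many antichains. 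This local-but-not-global pattern is precisely the tree analogue of the group and graph constructions and produces $\neg\RC(\kappa)$.

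\textbf{The main obstacle.} The substantive work is the explicit recursion producing $T$ and the verification of its two properties; the global non-specialness is a direct stationarity (Fodor-type) argument against any candidate $f:T\to\omega$, but the delicate half is showing that \emph{every} subtree of size $<\kappa$ is special, not merely the initial-segment or cofinal family. This is exactly where the full strength of non-reflection of $S$ enters: for an arbitrary small subtree one must reorganize its vertices so that the stationary obstruction it inherits is killed upon passing below an ordinal of uncountable cofinality, handling separately the case in which the supremum of its index set has countable cofinality. I would also stress that, unlike the $\kappa$-Aronszajn trees arising from walks on $\kappa$ (whose uncountable chains make both them and their subtrees trivially non-special, hence useless for a failure of $\RC(\kappa)$), the recursion must keep all chains countable, so the height has to be held at $\omega_1$ throughout.
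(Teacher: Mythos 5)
Your direction (i)$\to$(ii) is correct: coding an antichain decomposition by an $L_{\omega_1,\omega}$-theory of size $\kappa$ and applying compactness of $L_{\kappa,\kappa}$ for theories of size $\le\kappa$ is the standard argument, and you are right that it needs only weak compactness, not $V=L$; the paper takes this direction for granted.

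The genuine gap is in (ii)$\to$(i), which is where all the content of the theorem lies: the counterexample tree is never constructed. Your reduction is the right one---in $L$, non-weak compactness gives $E(\kappa)$ and $\square(\kappa)$ (Theorems \ref{th:E} and \ref{th:sq}), so it suffices to produce from such a witness a tree of size $\kappa$ and height $\omega_1$ that is non-special although every subtree of size $<\kappa$ is special---but what you offer at the crucial point is only a list of properties the tree should have, together with the assertion that it arises ``by the same mechanism'' as almost-free non-free groups and chromatically incompact graphs. That assertion is not a proof, and it also misdescribes how the known argument goes: the tree is not built by a transfinite recursion that kills specializing functions point by point along $S$, but is defined outright from the incompactness witness (in the stationary-set version, the tree $T(S)$ of all bounded countable closed subsets of $S$ ordered by end-extension; in Todor{\v c}evi{\'c}'s version, a tree extracted from the $\square(\kappa)$-sequence), after which one must prove two separate non-trivial facts: stationarity of $S$ makes the tree non-special (a pressing-down argument), and non-reflection makes $T(S\cap\gamma)$ special for each $\gamma<\kappa$, which suffices for arbitrary small subtrees since by regularity of $\kappa$ any subtree of size $<\kappa$ lives on a bounded part of $S$. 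There is moreover an obstacle for $\kappa>\omega_1$ that your sketch does not see: for $T(S)$ even to have height $\omega_1$ (a tree of countable height is trivially special), $S$ must contain closed copies of all countable ordinals, which is \emph{not} automatic for a stationary $S\sub E^\kappa_\omega$---this is an approachability-type issue and is one reason the argument is run from $\square(\kappa)$ rather than from a bare non-reflecting stationary set. The paper bypasses all of this by quoting Todor{\v c}evi{\'c}'s theorem \cite[Theorem 5.2]{Tdich} that $\RC(\kappa)$ implies $\neg\square(\kappa)$ and then concluding with Theorem \ref{th:sq}; a blind proof must supply that construction and its two verifications, and yours does not.
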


\begin{proof}
The non-trivial direction follows from Todor{\v c}evi{\'c} \cite[Theorem 5.2]{Tdich} for $\theta = \kappa$, which yields the failure of $\square(\kappa)$ from the assumption of $\RC(\kappa)$.
\end{proof}

While the least countably chromatically compact cardinal must be above $\beth_\omega$, Foreman and Laver showed in \cite{MR925267} that $\omega_2$ can be  chromatically compact in a weaker sense: starting with a huge cardinal, they constructed a model where for every graph of size $\omega_2$, if all its subgraphs of size $\omega_1$ have the countable chromatic number, then the chromatic number of the whole graph is at most $\omega_1$. This weaker form of countable chromatic compactness is a part of a more general body of results related \emph{downward transfer principles}  between successor cardinals. Discussing in detail transfer principles such as various forms of Chang's Conjecture is beyond the scope of this article, but the reader can consult the afore-mentioned article of Foreman and Laver \cite{MR925267}, and also Foreman \cite{MR2538021}, Cox \cite{cox2020adjoiningthingswantsurvey} or Eskew and Hayut \cite{MR3748588}. We just note that there are connections between transfer principles and compactness principles in this article: for instance, by results of Torres-P{\'e}rez and Wu,  Chang's Conjecture is equivalent to the tree property at $\omega_2$ if we assume $\neg \CH$ (\cite{MR3431031}) and Strong Chang's conjecture (with $\neg \CH$) implies the strong tree property at $\omega_2$, $\TP_{\omega_2}$ in our notation (\cite{MR3600760}). This is relevant for this article because Strong Chang's conjecture is implied by $\RC$, and hence these result show that $\RC + 2^\omega = \omega_2$ unifies certain logical and mathematical principles (see Section \ref{sec:uni} for more discussion). 

For several of the transfer properties the best lower bound known today is often at the level of huge cardinals (like in Foreman and Shelah \cite{MR925267} mentioned above), which sets these principle apart from the principles derivable from supercompact cardinals\footnote{Some variants of Chang's Conjecture can be obtained from a supercompact cardinal, though. See Eskew and Hayut \cite{MR3748588} for more details.} which we discuss here, but by discussion in Section \ref{sec:inacc} it is still expressible as a specific type of compactness of some logic.

\subsubsection{Suslin Hypothesis}\label{sec:Suslin}

Suppose $\kappa$ is an uncountable regular cardinal. Recall that a $\kappa$-Aronszajn tree is called \emph{$\kappa$-Suslin} if it has no antichains of size $\kappa$. Let us write $\SHg(\kappa)$ for the statement that there are no $\kappa$-Suslin trees.

Every $\kappa$-Suslin tree is $\kappa$-Aronszajn, hence $\TP(\kappa)$ implies $\SHg(\kappa)$. However, Suslin trees are of independent interest for cardinals on which the tree property necessarily fails, but some degree of compactness can be salvaged by having no Suslin trees: whenever $\kappa^{<\kappa} = \kappa$, then there are always $\kappa^+$-Aronszajn trees by Specker's result, yet $\kappa^+$-Suslin trees may not exist.\footnote{\label{ft:SH} It is known that $\SH$ is equiconsistent with $\ZFC$. Above $\omega_1$, the consistency strength increases according to the currently known lower bounds: By Shelah and Stanley \cite{shelah_stanley}, if $\kappa$ is an infinite cardinal and $2^\kappa = \kappa^+$, then $\SHg(\kappa^{++})$ implies that $\kappa^{++}$ is inaccessible in $L$. For an uncountable $\kappa$, Lambie-Hanson and Rinot \cite{paper31} improved the lower bound to a Mahlo cardinal: if $\kappa > \omega$, $2^\kappa = \kappa^+$, then $\SHg(\kappa^{++})$ implies $\kappa^{++}$ is Mahlo in $L$. In the presence of $\GCH$, the consistency strength of $\SHg(\kappa^+)$ for an uncountable $\kappa$ is the weakly compact cardinal ($\kappa^+$ is weakly compact in $L$ in this case), see Rinot \cite{paper24} for more details.} The case of $\kappa = \omega$ is of special interest in connection with the characterization of the real line: if there are no $\omega_1$-Suslin trees, then the real line has a combinatorial characterization through the non-existence of $\omega_1$ many open non-empty pairwise disjoint itervals of the reals. This is the \emph{Suslin hypothesis}, $\SH$, which we also briefly discuss in  Section \ref{sec:sep}. 

In this sense, $\SHg(\kappa)$ is a compactness principle in its own right\footnote{\label{R9} It is worth observing that $\SHg(\kappa)$ is equivalently expressed by stating the Ramsey property for $\kappa$-trees instead of graphs of size $\kappa$ (i.e.\ that every $\kappa$-tree contains a clique or an anti-clique of size $\kappa$). While Ramsey property for graphs  implies inaccessibility of $\kappa$, this is not the case for $\kappa$-trees, underscoring the difference between graphs and trees.}  and one which is compatible with $\GCH$ like other mathematical principles which we discuss here. Moreover, analogously to incompact abelian groups or graphs, the construction of Suslin trees is related to the existence of non-reflecting stationary sets (see Theorem \ref{th:Ls} for more details), in telling contrast to Aronszajn trees in the context of $\neg \GCH$.\footnote{By Cummings et al.\ \cite{8fold}, the principle $E(\omega_2)$ together with $2^\omega = \omega_2$ does not imply there are $\omega_2$-Aronszajn trees.} 

By Jensen's result, Suslin trees characterize weak compactness in $L$:

\begin{theorem}[Jensen \cite{JENfine}]\label{th:Ls} If $V = L$, then the following are equivalent for an uncountable regular $\kappa$:
\bce[(i)]
\item $\kappa$ is weakly compact.
\item There are no $\kappa$-Suslin trees, i.e.\ $\SHg(\kappa)$.
\ece
\end{theorem}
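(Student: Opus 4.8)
The two implications are of quite different character, and only the second uses $V=L$.

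\emph{From (i) to (ii).} This direction needs no hypothesis on $V$. If $\kappa$ is weakly compact then $\kappa$ is inaccessible and satisfies the tree property $\TP(\kappa)$ (the Erd{\H o}s--Tarski characterization recalled in Section~\ref{sec:slender}), so there are no $\kappa$-Aronszajn trees. Since every $\kappa$-Suslin tree is in particular $\kappa$-Aronszajn, there are no $\kappa$-Suslin trees, i.e.\ $\SHg(\kappa)$ holds.

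\emph{From (ii) to (i).} I would argue the contrapositive: working in $L$, assume $\kappa$ is a regular uncountable cardinal which is \emph{not} weakly compact, and manufacture a $\kappa$-Suslin tree. Two combinatorial inputs are available. First, by Theorem~\ref{th:sq} the failure of weak compactness in $L$ yields $\square(\kappa)$; fix a sequence $\seq{C_\alpha}{\alpha<\kappa}$ witnessing it, so each $C_\alpha$ is club in $\alpha$, the sequence is coherent, and it admits no thread. Second, in $L$ the diamond principle $\diamondsuit_\kappa$ holds at every regular uncountable $\kappa$; fixing a bijection between each initial segment of the tree being built and an ordinal $<\kappa$, I will use a $\diamondsuit_\kappa$-sequence to guess potential maximal antichains.

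The plan is then the classical Jensen construction of a normal tree $T\sub {}^{<\kappa}2$ by recursion on the levels. At successor levels every node splits. At a limit level $\alpha$ the coherent sequence dictates which cofinal branches of $\restr{T}{\alpha}$ are continued: branches are extended only when they cohere with the canonical approximations prescribed by $\langle C_\beta : \beta \le \alpha\rangle$, and simultaneously, whenever the $\diamondsuit_\kappa$-guess $A_\alpha$ happens to be a maximal antichain of $\restr{T}{\alpha}$, I continue only those branches passing through $A_\alpha$, thereby \emph{sealing} $A_\alpha$. Three things must then be checked. (a) $T$ is a $\kappa$-tree, i.e.\ each level has size $<\kappa$: at inaccessible $\kappa$ this follows from $\kappa$ being a strong limit, and at successor $\kappa=\mu^+$ from the $\square$-guided bookkeeping keeping the set of continued branches of size $\le\mu$. (b) $T$ has no antichain of size $\kappa$: for any maximal antichain $A$ of $T$ the set of $\alpha$ with $A\cap \restr{T}{\alpha}$ maximal in $\restr{T}{\alpha}$ is club, so by $\diamondsuit_\kappa$ it is guessed and sealed at some level, forcing $A\sub\restr{T}{\alpha}$ and hence $|A|<\kappa$. (c) $T$ has no cofinal branch: a cofinal branch would, through the coherent extension rule, synthesize a thread through $\seq{C_\alpha}{\alpha<\kappa}$, contradicting $\square(\kappa)$. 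Together (a) and (c) make $T$ a $\kappa$-Aronszajn tree and (b) makes it Suslin, so $\SHg(\kappa)$ fails, completing the contrapositive.

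The main obstacle is precisely the coordination at limit levels in step (c): arranging the extension rule so that the coherence of $\seq{C_\alpha}{\alpha<\kappa}$ is faithfully transmitted into $T$ --- so that a hypothetical cofinal branch really does read off a thread --- while simultaneously leaving enough branches alive for the antichain-sealing of (b) and for normality to be maintained. This is the fine-structural heart of Jensen's argument. The inaccessible and successor cases differ only in how level sizes are controlled in (a), and I would fall back on the familiar $\diamondsuit_{\mu^+}+\square_\mu$ bookkeeping for $\kappa=\mu^+$ (both principles holding in $L$) should a fully uniform treatment through $\square(\kappa)$ prove awkward.
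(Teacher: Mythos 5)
Your first direction is fine and needs no comment: weak compactness gives inaccessibility plus $\TP(\kappa)$ (Erd{\H o}s--Tarski), every $\kappa$-Suslin tree is $\kappa$-Aronszajn, and no hypothesis on $V$ is used. The problem is the converse direction, where your proposal has a genuine gap that you flag yourself but then misjudge as routine. You claim that $\square(\kappa)+\diamondsuit(\kappa)$ (both indeed available in $L$ at a non-weakly-compact regular $\kappa$, by Theorem \ref{th:sq} and by $V=L$) can be fed into ``the classical Jensen construction.'' It cannot. Jensen's classical construction at a successor $\kappa=\mu^+$ uses \emph{his} square $\square_\mu$, whose clubs all have order type $\le\mu$; that order-type bound is what drives the limit-level recursion. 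Todor{\v c}evi{\'c}'s $\square(\kappa)$, which is what Theorem \ref{th:sq} actually gives you, carries no bound on order types and only coherence at accumulation points, and whether $\square(\kappa)+\diamondsuit(\kappa)$ yields a $\kappa$-Suslin tree was a long-standing open problem, settled positively only in recent years by Rinot and Brodsky--Rinot via the proxy principle $\mathrm{P}^\bullet(\kappa,\kappa,\sqsubseteq^*,1,\{\kappa\},\kappa)$ --- precisely the machinery the paper's proof cites (\cite{paper32}, \cite{MR4228336}). So the step you describe as ``the fine-structural heart of Jensen's argument'' and leave unexecuted is not a classical argument you may defer to; it \emph{is} the theorem. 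Your fallback of $\square_\mu+\diamondsuit(\mu^+)$ repairs the case $\kappa=\mu^+$ with $\mu$ regular (and, with additional care, $\mu$ singular), but it does nothing for inaccessible non-weakly-compact $\kappa$, which is the essential case of Jensen's theorem, and there your proposal contains no proof. Note that the paper routes the argument differently: in $L$ one extracts a stationary set carrying $\diamondsuit$ that fails to reflect in the strong sense of being avoided by a weakly coherent $C$-sequence, and this configuration is an instance of the proxy principle, which produces the $\kappa$-Suslin tree uniformly in all three cases (successor of a regular, successor of a singular, inaccessible).

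Two further points. First, your step (c) is superfluous: once the tree is normal and splitting, sealing maximal antichains already excludes cofinal branches, since a cofinal branch in such a tree yields an antichain of size $\kappa$ (at each level pick an immediate successor of a branch node lying off the branch); so no ``thread synthesis'' from a hypothetical branch is needed --- but this observation does not fill the gap, it only shows the gap sits entirely in (a) and (b). Second, at limit levels of uncountable cofinality you must simultaneously keep every node alive (normality), keep the level of size $<\kappa$, and continue only branches passing through the guessed antichain; reconciling these three demands is exactly what the avoiding/coherent club sequence is for, and your prescription ``extend only branches that cohere with the canonical approximations prescribed by $\seq{C_\beta}{\beta\le\alpha}$'' is not yet a definition of a construction, because $\square(\kappa)$-coherence by itself singles out no canonical branch through a level indexed by an ordinal in $\mathrm{acc}(C_\alpha)$ and gives no reason that the branches so chosen cohere as $\alpha$ varies.
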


\begin{proof}
 The non-trivial direction was proved by Jensen.  A closer analysis shows that a $\kappa$-Suslin tree may be constructed assuming the diamond holds over a stationary subset of $\kappa$ that does not
reflect in the strong sense of being avoided by some weakly coherent $C$-sequence. For detailed proofs see  Brodsky and Rinot \cite[Theorem A]{paper32} for successor of regulars, \cite[Theorem B]{paper32} for successor of singulars and \cite[Corollary 4.27]{MR4228336} for inaccessibles, all three being instances the proxy principle $\mathrm{P}^\bullet(\kappa,\kappa,\sqsubseteq^*,1,\{\kappa\},\kappa)$ which implies the existence of a $\kappa$-Suslin tree, \cite[Corollary 6.7]{MR4228336}.
\end{proof}

In contrast to $\TP(\kappa)$, it is  open whether the characterization of weak compactness of $\kappa$ via $\SHg(\kappa)$ and inaccessibility holds in an arbitrary universe $V$. The principles $\SHg(\kappa)$ and $\TP(\kappa)$ appear to be conceptually different, with the former being more mathematical and the latter more logical (in the sense of a the classification we use). 

The principle $\SHg(\kappa)$ has often been analysed through a stronger principle called \emph{the special  Aronszajn tree property}, $\SATP(\kappa)$, which states that there are $\kappa$-Aronszajn trees and all $\kappa$-trees are special. Clearly $\SATP(\kappa)$ implies $\SHg(\kappa) + \neg \TP(\kappa)$. It is known that $\MA$ implies $\SATP(\omega_1)$, hence the traditional method of obtaining $\SHg(\omega_1)$ in fact ensures a stronger property. Laver and Shelah showed in \cite{LS:sh} that $\CH$ is consistent with $\SATP(\omega_2)$. Later,  Hayut and Golshani generalized the method of Laver and Shelah in \cite{MR4094555} and obtained a model where $\SATP(\kappa^+)$ holds for every regular cardinal $\kappa$. See also recent results of Adkisson \cite{adkisson2025treepropertiessuccessorssingulars} and Cummings et al.\ \cite{cummings2025treepropertylongintervals}.

\brm
Under $V = L$, there is a related characterization in terms of cofinal branches in $\kappa$-trees for a cardinal larger than a weakly compact, the \emph{ineffable cardinal}: if $V = L$, then $\kappa$ is ineffable if and only if there are no thin $\kappa$-Kurepa trees, see \cite[Chapter 7, Theorem 2.7]{DEVbook} for more details. The non-existence of (thin) Kurepa trees is also a compactness principle, as we already mentioned in connection with the weak Kurepa Hypothesis. Kurepa trees have many applications in model theory, see for instance Vaught \cite{MR210574}, Sinapova and Souldatos \cite{MR4159762} and Po\'{o}r and Shelah \cite{MR4292936}, but more details are beyond the scope of this article.
\erm

\subsection{Topological spaces} \label{sec:top}

Another consequence of $\Delta$-reflection is a notion of compactness for topological spaces, the \emph{collectionwise Hausdorff compactness}. In fact, this compactness principle, and another principle related to metrizability, are equivalent to the principle $\FRP$ we discussed in Section \ref{sec:FRP} (see Fuchino et al.\ \cite{more_frp}).

We refer the reader to Shelah \cite{MR540629}, Fleissner and Shelah \cite{FS:Haus}, and Laberge and Landver \cite{LL:Haus} for more details regarding collectionwise Hausdorff compactness. Let us just state that this concept characterizes weakly compact cardinals in $V = L$: 

\begin{theorem}\label{th:Haus}
If $V = L$, then the following are equivalent for an uncountable regular $\kappa$:
\bce[(i)]
\item $\kappa$ is weakly compact.
\item $\kappa$ is collectionwise Hausdorff compact.
\ece
\end{theorem}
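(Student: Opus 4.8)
The plan is to prove the two implications separately, with (i)$\to$(ii) being the routine direction and (ii)$\to$(i) carrying the real content, proved contrapositively through Theorem \ref{th:E}, exactly as in the companion characterizations (Theorems \ref{th:Lab}, \ref{th:Lcolor}, \ref{th:Lg}, \ref{th:Ls}).

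For (i)$\to$(ii), I would use the $\Pi^1_1$-indescribability of a weakly compact $\kappa$ (which implies $\kappa$ is inaccessible, so $H(\kappa) = V_\kappa$). Let $X$ have size $\kappa$ with every subspace of size $<\kappa$ collectionwise Hausdorff, and suppose toward a contradiction that some closed discrete $D \sub X$, which we may take of size $\kappa$, admits no pairwise disjoint open separation. Coding $X$, its topology, and $D$ by a predicate $A$ on $H(\kappa)$, the assertion that ``no assignment of basic open neighbourhoods to the points of $D$ yields a pairwise disjoint family'' is $\Pi^1_1$ over $(H(\kappa),\in,A)$ and holds by assumption. By $\Pi^1_1$-indescribability it reflects to some $\alpha<\kappa$, giving a closed discrete subset of the subspace coded below $\alpha$---a space of size $<\kappa$---with no disjoint separation, contradicting the hypothesis. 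Hence $X$ is collectionwise Hausdorff.

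For (ii)$\to$(i) I would argue contrapositively. If $\kappa$ is not weakly compact then, since $V=L$, Theorem \ref{th:E} yields $E(\kappa)$: a stationary $S \sub E^\kappa_\omega$ that does not reflect. Fixing for each $\alpha \in S$ a cofinal ladder $c_\alpha\colon\omega\to\alpha$, I would build the associated ladder-system (Bing-type) space on $\kappa$: ordinals outside $S$ are isolated, and a neighbourhood base at $\alpha\in S$ is given by the tails $\{\alpha\}\cup\{c_\alpha(m):m>n\}$ for $n<\omega$, so that $S$ becomes a closed discrete subspace. Because $S$ does not reflect, for every $\gamma<\kappa$ of uncountable cofinality the trace $S\cap\gamma$ is nonstationary in $\gamma$; a club in $\gamma$ disjoint from $S$ then permits disjointifying the relevant ladder tails, showing that every subspace of size $<\kappa$ is collectionwise Hausdorff. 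Globally, however, a putative pairwise disjoint separation of $S$ feeds a pressing-down (Fodor) argument on the stationary set $S$ that forces a collision of tails---equivalently, it would reflect $S$---contradicting $E(\kappa)$. Thus $X$ witnesses that $\kappa$ is not collectionwise Hausdorff compact. Detailed forms of this construction appear in Shelah \cite{MR540629}, Fleissner and Shelah \cite{FS:Haus}, and Laberge and Landver \cite{LL:Haus}.

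The main obstacle is the bookkeeping in this last step: one must pin down that it is precisely the non-reflection of $S$ that yields local separability while the stationarity of $S$ in $\kappa$ obstructs global separation, and the Fodor argument producing the global clash must be set up on the correct auxiliary map (and possibly on an enriched space with an added top point, as in the cited constructions). A cleaner alternative, which I would record as a remark, is to invoke the equivalence of $\FRP$ with collectionwise Hausdorff compactness from Fuchino et al.\ \cite{more_frp}: localized to a single $\kappa$ this identifies collectionwise Hausdorff compactness at $\kappa$ with $\FRP(\kappa)$, whence Theorem \ref{th:Haus} follows immediately from Theorem \ref{th:FRP}.
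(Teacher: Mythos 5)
Your proposal is correct in outline and structurally matches the paper's proof: both treat (i)$\to$(ii) as routine and prove (ii)$\to$(i) contrapositively from the $L$-characterizations of non-weak-compactness. The difference is which incompactness engine is primary. The paper quotes Laberge--Landver \cite[Theorem 1]{LL:Haus}, which produces a collectionwise incompact Hausdorff space of size $\kappa$ from $\square(\kappa)$ (available in $L$ by Theorem \ref{th:sq}), and only remarks that $E(\kappa)$ suffices by a variant of that argument; you instead take $E(\kappa)$ (Theorem \ref{th:E}) as primary and reconstruct the ladder-system space together with the Fodor pressing-down argument, and you also supply the $\Pi^1_1$-indescribability details where the paper writes ``easy to check''. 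What each buys: the paper's version is shorter and defers entirely to the literature, while yours is more self-contained and uniform with the companion characterizations (Theorems \ref{th:Lab}, \ref{th:Lcolor}, \ref{th:Lg}, \ref{th:Ls}), all of which run through $E(\kappa)$.

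Two caveats. First, your aside that a disjoint separation of $S$ ``would reflect $S$'' has the logic backwards: the pressing-down argument shows that a separation would force $S$ to be non-stationary in $\kappa$, so the global contradiction is with the \emph{stationarity} of $S$; non-reflection is used only to make the small subspaces collectionwise Hausdorff. Second, the ``cleaner alternative'' via Fuchino et al.\ \cite{more_frp} should not be recorded as an instant corollary: the equivalence of $\FRP$ with collectionwise Hausdorff compactness proved there is a global statement (all regular $\kappa\ge\omega_2$ at once), and the level-by-level identification of collectionwise Hausdorff compactness at a single $\kappa$ with $\FRP(\kappa)$ would itself need an argument before Theorem \ref{th:FRP} could be invoked.
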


\begin{proof}
It is easy to check that if $\kappa$ is weakly compact, then spaces of size $\kappa$ are collectionwise Hausdorff compact. For the converse direction, by Laberge and Landver \cite[Theorem 1]{LL:Haus}, $\square(\kappa)$ is sufficient for constructing a collectionwise incompact Hausdorff space of size $\kappa$. In fact, by a variant of this argument, $E(\kappa)$ is sufficient as well (see the discussion in \cite{LL:Haus} below Theorem 1).
\end{proof}

By Shelah \cite{MR540629}, collectionwise Hausdorff compactness is consistent on $\omega_2$ in contrast to abelian compactness. It is known that the countable color compactness (without the restriction on the size of the graphs) is also consistent at $\omega_2$, and in fact equivalent to $\FRP$ by Fuchino et al.\ \cite[Theorem 2.8]{more_frp} and Section \ref{sec:FRP} in this article.

\section{Standard models}\label{sec:canonical}\label{sec:col}

It is known that compactness at small cardinals has a large cardinal strength,\footnote{For instance the tree property $\TP(\omega_2)$ implies that $\omega_2$ is a weakly compact cardinal in $L$ and $\ISPii$ forced by a proper standard iteration implies that there must be a supercompact cardinal in the ground model, see Viale and Weiss \cite{VW:PFA}.} hence a natural method for obtaining compactness at small cardinals is to turn a large cardinal $\kappa$ into a small cardinal and argue that some compactness principles associated with large cardinals are preserved by the collapse. Depending on the collapsing forcing, various compactness principles will hold in the final generic extension. In particular, starting with a supercompact $\kappa$, there is a proper iteration with countable support guided by a Laver function naming all proper forcings which forces $\PFA$ or $\MM$. It is also possible to collapse infinitely many cardinals at the same time in order to have compactness at the successor of a singular cardinal.\footnote{This method can be used to obtain stationary reflection at $\aleph_{\omega+1}$ or the principle $\Delta_{\aleph_{\omega^2},\aleph_{\omega^2+1}}$ at $\aleph_{\omega^2+1}$. An important observation behind this construction is that compactness properties which hold at large cardinals often extend to the successors of their singular limits. This was first observed by Magidor and Shelah, see \cite{MAGSH:succ}, who showed that the tree property holds at the successor of the singular limit of infinitely many strongly compact cardinals, and this result extends to $\Delta$-reflection as well (Fontanella and Hayut \cite[Corollary 2.5]{FH:delta}, for the countable cofinality).}

We will briefly consider the most common collapsing forcings and direct readers to specific articles for more details.

The standard forcing iteration $\P_\kappa$ (where $\kappa$ is a large cardinal) defined by Viale and Weiss in \cite{VW:PFA}  is sufficiently representative for this survey:
\begin{compactenum}[(i)]
\item $\P_\kappa$ is a direct limit of an iteration $\seq{(\P_\alpha,\dot{\Q}_\alpha)}{\alpha \le \kappa, \alpha < \kappa}$ which takes direct limits stationarily often, and
\item $\P_\alpha$ has size $<\kappa$ for all $\alpha < \kappa$ (this for instance implies that $\P_\alpha$ preserves the largeness of $\kappa$).
\end{compactenum}

Every standard iteration is $\kappa$-cc and satisfies the $\kappa$-approximation property by \cite[Lemma 5.2]{VW:PFA}.\footnote{It is apparently  open  whether there are non-trivial standard iterations starting with a weakly compact $\kappa$ which, for instance, preserve $\omega_1$, turn $\kappa$ into $\omega_2$ and do \emph{not} force $\SRii$. Note that the results showing logical independence of several compactness principles in Cummings et al.\ \cite{8fold} are obtained by intentionally destroying the desired compactness principles by  forcings applied after  standard iterations.}

Let us mention some well-known examples for reference. Suppose $\GCH$ holds in the ground model and $\kappa$ is at least a weakly compact cardinal for the following examples:

\begin{itemize}

\item  Levy collapse $\Coll(\omega_1,<\kappa)$ with countable conditions forces for instance $\SR(\omega_2)$ and $\neg \KH$ (and Rado's Conjecture $\RC$ if $\kappa$ is strongly compact, see \cite{T:Rado}). It does not force $2^\omega > \omega_1$ and hence forces $\neg \TP(\omega_2)$, $\neg \AP(\omega_2)$, etc.

\item A countable support iteration of length $\kappa$ of the Sacks forcing at $\omega$ forces $\TP(\omega_2)$, $\SR(\omega_2)$ and other principles (see Kanamori \cite{KANAMORIperfect}). More generally, many iterations with countable support of forcings with some form of fusion which add new reals force $\TP(\omega_2)$, $\SR(\omega_2)$ and other principles. See Honzik and Verner \cite{HV:grig} and Stejskalov{\'a} \cite{S:g} for the Grigorieff forcing and Friedman et al.\ \cite{FHZ:fusion} for an abstractly defined class of forcings with fusion.

\item (Mixed support iteration). A forcing due to Mitchell (\cite{MIT:tree}) and its variants. See Abraham \cite{ABR:tree} for a modern presentation of the forcing and Krueger \cite{KR:DSS} for its description using the notion of a mixed support iteration.

\item  Canonical iterations with countable support $\P$ which force $\PFA$ (and other forcing axioms) ($\kappa$ is supercompact for these principles).

\end{itemize}

There are other ways of forcing compactness which are not of the ``standard'' form mentioned above. A well-known example is Neeman's forcing with side conditions introduced in \cite{N:SofModels}. Other methods are also employed when compactness principles are to be obtained globally, i.e.\ on successive regular cardinals. Sometimes the usual forcings can be combined using a product with a suitable support (typically if the consistency strength of the principles is below a weakly compact cardinal or there are larger gaps between the successive cardinals): for instance the tree property at every other regular cardinal below $\aleph_\omega$ (see for instance Friedman and Honzik \cite{FH:tree1}) or successive failures of the approachability property on every regular cardinal in the interval $[\omega_2,\aleph_\omega]$ in Unger \cite{U:ap}. For stronger principles, the situation starts to be more complicated because the principles tend to interact with each other if considered at successive cardinals. For instance the tree property at $\omega_2$ and $\omega_3$ requires a more complicated forcing and is consistency-wise much stronger than two weakly compact cardinals.\footnote{$\TP(\omega_2) + \TP(\omega_3)$ implies the consistency of a Woodin cardinal as was observed in Cummings et al.\ \cite{cummings2025treepropertylongintervals} improving an older lower bound due to Magidor reported in Abraham \cite{ABR:tree}. The construction in Abraham \cite{ABR:tree} uses two supercompact cardinals.} Obtaining the tree property at an infinite interval of regular cardinals is technically demanding, with the current record set in Cummings et al.\ \cite{cummings2025treepropertylongintervals} with the interval $[\omega_2,\aleph_{\omega^2+3}]$ with $\aleph_{\omega^2}$ strong limit.\footnote{It is still open whether the tree property can hold at every regular cardinal $\ge\omega_2$ or whether the tree property at $\aleph_{\omega+1}$ is compatible with the failure of $\SCH$ at a strong limit $\aleph_\omega$.}  It is natural to consider stronger forms of the tree property as well as was  done by Fontanella in \cite{Font:tree} for the principle $\ITP$ below $\aleph_\omega$. A similar questions for the stronger principles $\ISP_\kappa$ below $\aleph_\omega$ is open with the exception of the joint consistency of $\ISP_{\omega_2}$ and $\ISP_{\omega_3}$ which was shown  by Mohammadpour and Veli{\v c}kovi{\'c} in \cite{MR4290492} using a generalization of Neeman's forcing with side conditions. In some cases the task is complicated further by restrictions put on the continuum function: while the tree property on all regular cardinals in the interval $[\omega_2,\aleph_\omega]$ does not put new restrictions on the continuum function as shown  by Stejskalov{\'a} \cite{S:tp}, a similar configuration with the negation of the weak Kurepa Hypothesis necessitates $2^\omega \ge \aleph_{\omega+1}$. 
Finally, global results were considered also for stationary reflection below $\aleph_\omega$ by Jech and Shelah \cite{JS:full} or Chang's Conjecture by Eskew and Hayut \cite{MR3748588}. 

\brm
All the models  we mentioned above are obtained by collapsing a large cardinal (or cardinals) or by iterating along a large cardinal. In both cases, the final model is a limit of some increasing chain of models. In Rinot et al.\ \cite{paper69} it was demonstrated that obtaining compactness in a limit of a decreasing chain of models could be quite fruitful, in particular, with respect to the notions studied by Bagaria and Magidor in \cite{MR3226024}.
\erm

\section{Preservation of compactness} \label{sec:preserve}

On the one hand, by Solovay's theorem (see \cite{LS:m}), all large cardinals $\kappa$ mentioned in this article are preserved by all forcings of size $<\kappa$. On the other hand, all these cardinals are killed by adding $\kappa$-many Cohen subsets of $\omega$ because they destroy the inaccessibility of $\kappa$.  One might ask whether other large cardinal properties of $\kappa$, besides inaccessibility, can be destroyed with such ease. This question can be made more precise by asking whether the combinatorial cores, i.e.\ compactness principles related to these cardinals like the ones we discuss in this artile, can also be destroyed by simple forcings.

As it turns out, this is usually not the case, and in particular Cohen forcing $\Add(\omega,\kappa)$ will preserve $\TP(\kappa)$, $\SR(\kappa)$ and many other compactness principles at a weakly compact cardinal $\kappa$, making $\kappa$ a weakly inaccessible cardinal satisfying these compactness principles in the generic extension $V[\Add(\omega,\kappa)]$.

The investigation of the extent of preservation of compactness principles by forcing notions is an active area of set theory. The results of this line of research are useful for separating consequences of various compactness principles and also consequences of $\PFA$. Additionally, the preservation results indicate that compactness principles at accessible cardinals are stable notions and not accidental by-products of  specific iterations and that they can---perhaps--- be viewed as  viable candidates for  additional axioms.

However,  extensive preservation, or \emph{indestructibility}, can be interpreted also negatively as an indication that compactness principles do not have many consequences, at least not those whose truth can be changed by forcings which preserve the said principles (see Section \ref{sec:sep} for examples).

We will divide the preservation theorems into two groups: \emph{absolute} preservation theorems and \emph{model-specific} theorems. As the names suggest, absolute theorems assert that a forcing $\P$ preserves a given principle over any transitive model of a theory $T$ which extends $\ZFC$, while model-specific theorems apply only over specific forcing extensions satisfying  $T$. For showing independence of various statements, the model-specific preservation is sufficient. However, absolute preservation theorems give more insight into the relationship between a given compactness principle and the theory $T$.

\brm
Existing  preservation theorems are usually formulated for successors of regular cardinals and include all logical compactness principles, but only the weaker forms of compactness for stationary sets like $\SR$ or $\FRP$.  Principles like $\RC$ and $\WRP(\omega_2)$ are typically much easier to destroy: they both imply $2^\omega \le \omega_2$ hence cannot be preserved by Cohen forcing of length $>\omega_2$ (in fact $\RC$ is destroyed by adding a new real, see Section \ref{sec:uni} for more references). At successors of singulars, even logical principles are less understood: for instance, $\aleph_{\omega+1}$ is destroyed by $\Coll(\omega,\omega_1)$ as shown by Hayut and Magidor in \cite{HM:dest}.\footnote{It is still open whether $\TP(\aleph_{\omega+1})$ can be destroyed by a cofinality-preserving small forcing. By Rinot \cite{paper06} the answer is positive for special Aronszajn trees at $\aleph_{\omega_1+1}$: there is a cofinality-preserving small forcing which to a model without special Aronszajn trees at $\aleph_{\omega_1+1}$ adds a special $\aleph_{\omega_1+1}$-Aronszajn tree.}
\erm

\subsection{Absolute preservation} \label{sec:abs}

Before we discuss compactness principles, let us review preservation results for forcing axioms for comparison.  $\MM$ is preserved by all $\omega_2$-directed closed forcings by Larson \cite{MR1782117} and $\PFA$ is preserved by all $\omega_2$-closed forcing by K{\"o}nig and Yoshinobu \cite{200410101}. It follows that over models of $\PFA$, $\ISPii$ and all consequences of $\PFA$ are preserved by $\omega_2$-closed forcings. There is a weaker analogue of this preservation for the tree property at $\kappa^{++}$ by $\kappa^+$-closed and \emph{$\kappa^{++}$-liftable} forcings over the Mitchell model, see next Section \ref{sec:model}, item (\ref{lift}).

However, compactness principles are in addition preserved by forcings with small chain conditions:

\begin{enumerate}
\item \label{a:FRP} Preservation over $\ZFC$.  Fodor-type Refection Principles is preserved by all ccc forcings by Fuchino et al.\ \cite[Theorem 3.4]{MR2610450} (see Section \ref{sec:FRP} for more details on this principle). Also Chang's Conjecture is preserved by all ccc forcing notions. 
\item Preservation over $\ZFC$. Foreman showed in \cite{MR730583} that $\mu^{++}$-saturated ideals over $\mu^+$, $\mu$  regular, are preserved by $\mu^+$-centered forcing notions (in the sense that they generate saturated ideals in the extension).
\item Preservation over $\ZFC$. Gitik and Krueger showed in \cite{GK:a} that the negation of the approachability property at $\mu^{++}$, $\mu$ regular, is preserved by all $\mu^+$-centered forcings.
\item Preservation over $\ZFC$. Krueger essentially showed in \cite{KR:DSS} the preservation of the Disjoint Stationary Sequence property (which implies the negation of the approachability property):
Suppose $\kappa$ is an infinite cardinal and $\seq{s_\alpha}{\alpha \in S}$ is a disjoint stationary sequence on $\kappa^{++}$, with $S \sub \kappa^{++} \cap \cof(\kappa^+)$ stationary. Suppose $\P$ is a forcing notion which preserves $\kappa$, and moreover preserves stationary subsets of both $\kappa^+$ and $\kappa^{++}$. Then $\P$ forces that $\seq{s_\alpha}{\alpha \in S}$ is a disjoint stationary sequence on $\kappa^{++}$. 
\item Preservation over $\ZFC$. Honzik and Stejskalov{\'a} showed in \cite{HS:u} that stationary reflection at $\mu^+$, $\mu$ regular,  is preserved by all $\mu$-cc forcing notions. They further showed in \cite{HS:u} that if $\mu^{<\mu} = \mu$ then the club stationary reflection at $\mu^{++}$ is preserved by Cohen forcing at $\mu$ and Prikry forcing at $\mu$ (provided $\mu$ is measurable). This preservation result has been later extended to all $\mu^+$-linked forcings in Gilton and Stejskalov{\'a} \cite{TS:CSR} (being $\mu^+$-linked is slightly weaker than $\mu^+$-centered).
\item \label{a:1} Preservation over $T = \ZFC + \ISPgen{\mu^{++}} + \mu^{<\mu} = \mu$. Honzik at al. showed in \cite{HLS:gmp} that Cohen forcing at a regular cardinal $\mu$ preserves $\ISPgen{\mu^{++}}$ over all models of $T$.

Note that the same result is open both for $\ITPgen{\mu^{++}}$ and $\TP(\mu^{++})$ (in fact, preservation by single Cohen at $\mu$ is open).
\item \label{a:2} Preservation over $T = \ZFC + \nonwKH(\mu^{+}) + \mu^{<\mu} = \mu$. Lambie-Hanson and Stejskalov{\'a} recently observed that \cite{HLS:gmp} also establishes that Cohen forcing at a regular cardinal $\mu$ preserves $\nonwKH(\mu^+)$ over all models of $T$.

Note that the same result is open for $\nonKHgen(\mu^+)$ (in fact, preservation by single a Cohen real at $\mu$ is open).
\end{enumerate}

\brm
The difficulty of extending (\ref{a:1}) and (\ref{a:2}) to  preservation of $\ITPgen{\mu^{++}}$, $\TP(\mu^{++})$ and $\nonKHgen(\mu^+)$ lies in the fact that these principles refer to thin lists (or trees) while  $\ISPgen{\mu^{++}}$ and $\nonwKH(\mu^+)$ refer to slender lists--a difference which essential for the argument in \cite{HLS:gmp}. See the next section where we discuss that this obstacle can be partially overcome over specific models.
\erm

\brm \label{rm:overPFA}
Lambie-Hanson and Stejskalov{\'a} essentially showed in \cite[Corollary 5.8] {arithmetic_paper}  that arbitrarily long random forcing preserves $\nonwKH(\omega_1)$ over models of $\PFA$. They also observed that this result extends to $\TP(\omega_2)$ over models of $\PFA$.
\erm

\subsection{Model-specific preservation}\label{sec:model}

Suppose $\kappa^{<\kappa}= \kappa$ and $\lambda > \kappa$ is a large cardinal. Mitchell forcing $\M(\kappa,\lambda)$ (in its usual variants) collapses cardinals in the open interval $(\kappa^+,\lambda)$ and forces various compactness principles at $\lambda$, which is equal to  $\kappa^{++}$ in the generic extension. See more details about variants of the Mitchell forcing in Abraham \cite{ABR:tree} or in Honzik and Stejskalov{\'a} \cite{HS:ureg} (the article summarizes the key properties of Mitchell forcing needed for preservation theorems).

Mitchell forcing $\M(\kappa,\lambda)$ has the useful property that it can be written as a two stage iteration of the Cohen forcing $\Add(\kappa,\lambda)$ followed by a forcing $\dot{R}$ which is forced to be $\kappa^+$-distributive. Moreover, there is a projection from $\Add(\kappa,\lambda) \x \T$, where $\T$ is $\kappa^+$-closed. One can show various compactness principles in $V[\M(\kappa,\lambda)]$ using this product and the associated projections. Now, if $\dot{\Q}$ is a $\kappa^+$-cc forcing notion in $V[\Add(\kappa,\lambda)]$ which preserves $\kappa$, then it is possible to analyse the forcing extension $V[\M(\kappa,\lambda) * \dot{\Q}]$ using a variant of the  product  analysis by considering the product $(\Add(\kappa,\lambda) * \dot{\Q}) \x \T$.  This method can be for instance used to extend the result of Jensen and Schlechta in (\ref{m:1})  to Mitchell-style forcings. 

\begin{enumerate}
\item \label{m:1} Jensen and Schlechta showed in \cite{JSkurepa} that $\kappa^+$-cc forcings preserve the negation of the Kurepa Hypothesis $\neg \KHgen(\kappa^+)$ over generic extensions by Levy collapse $\Coll(\kappa^+,<\lambda)$, where $\lambda$ is a Mahlo cardinal. It is known that $\lambda$ being Mahlo is optimal for this result. 

\item \label{q1} Honzik and Stejskalov{\'a} showed in \cite{HS:ind} that the tree property $\TP(\kappa^{++})$ is preserved over a generic extension $V[\M(\kappa,\lambda)]$ by all $\kappa^+$-cc forcings existing in the intermediate forcing extension $V[\Add(\kappa,\lambda)]$: Suppose $\kappa = \kappa^{<\kappa}$ and $\lambda > \kappa$ is weakly compact. Suppose $\Add(\kappa,\lambda)$ forces that $\dot{\Q}$ is a forcing notion which is $\kappa^+$-cc and preserves $\kappa$. Then $\M(\kappa,\lambda) * \dot{\Q}$ forces $\TP(\kappa^{++})$.

Even though this is not written up, Lambie-Hanson and Stejskalov{\'a} observed that using 
\cite[Lemma 32]{ChS:guess}  this preservation result extends to $\ITP_{\kappa^{++}}$. Using a suitable definition of $\M(\kappa,\lambda)$ as in Cummings et al.\ \cite{8fold}, this result extends to  $\ISP_{\kappa^{++}}$ as well.

\item \label{lift} It is known that a supercompact cardinal $\kappa$ is preserved by all $\kappa$-directed closed forcing over a carefully prepared model by a result of Laver (\emph{Laver preparation}). We also mentioned in Section \ref{sec:abs} that $\PFA$ is preserved by all $\omega_2$-closed forcings over any model of $\PFA$. There is an analogue of these preservation theorems for $\TP(\kappa^{++})$ over a variant of the Mitchell model: Honzik and Stejskalov{\'a} defined in \cite[Definition 4.1]{HS:ind} a closure-type property called \emph{$\lambda$-liftability} (sandwiched between $\lambda$-closure and $\lambda$-directed closure)\footnote{This class for instance includes all forcings $\P$  such that any two compatible $p,q$ have the greatest lower bound and any decreasing sequence of length $<\lambda$ has the greatest lower bound (for instance the generalized Sacks forcing at $\lambda$ is not $\lambda$-directed closed, but it is $\lambda$-liftable (see \cite[Footnote 12]{HS:ind}). The definition of \emph{liftability} is similar to the notion of the \emph{complete} forcing notion (for $\lambda = \omega_1$) introduced by Shelah (see \cite[Chapter V]{SHELAHproper} and \cite[Remark 4.2]{HS:ind} for more context).} and showed that if $V[\R(\kappa,\lambda)]$ is a generic extension by Abraham-style Mitchell forcing $\R(\kappa,\lambda)$ for a supercompact $\lambda$ (see \cite[Definition 2.3]{HS:ind}), then $\TP(\lambda)$ is preserved by all $\kappa^+$-closed $\kappa^{++} = \lambda$-liftable forcings $\Q$ in $V[\R(\kappa,\lambda)]$, see \cite[Theorem 4.7]{HS:ind} for more details.

\item \label{q2} The result of Jensen and Schlechta from (\ref{m:1}) has a weaker analogue for the Mitchell model as proved by Honzik and Stejskalov{\'a} in \cite{HS:ureg} for the negation of the weak Kurepa Hypothesis:  Assume $\omega \le \kappa < \lambda$ are cardinals, $\kappa^{<\kappa} = \kappa$ and $\lambda$ is weakly compact.  Suppose $\Add(\kappa,\lambda) * \dot{\Q}$ is productively $\kappa^+$-cc (i.e.\ the product is $\kappa^+$-cc) and preserves $\kappa$. Then $\M(\kappa,\lambda) * \dot{\Q}$ forces $\neg \wKH(\kappa^+)$.
\end{enumerate}

Even if  $\dot{\Q}$ in items (\ref{q1}) and (\ref{q2}) is restricted to the Cohen submodel, there are some questions for which these limited preservation results can be useful. For instance, the generalized Baire space $\kappa^{\kappa}$ is not changed by the $\kappa^+$-distributive quotient forcing $\dot{R}$, and hence by utilizing a $\kappa^+$-cc forcing $\dot{\Q}$ which controls generalized cardinal invariants over $V[\Add(\kappa,\lambda)]$, it is possible to obtain the consistency of compactness principles at $\kappa^{++}$ together with cardinal invariants ensured by $\dot{\Q}$. See \cite{HS:u} and \cite{HS:ureg} for more details and Example 4 in the next section \ref{sec:sep}.

\subsection{Compactness vs.\ forcing axioms}\label{sec:vs}\label{sec:sep}

We have seen in Sections \ref{sec:con} and \ref{sec:FRP} that $\ISPii$ and $\FRP$ imply some of the global consequences of $\PFA$ like the failure of squares $\square(\lambda)$ for every uncountable regular $\lambda$. However,  preservation of these compactness principles at $\omega_2$ under various ccc forcing notions entails that many of the local consequences of $\MM$ are independent over $\ZFCc$. We will illustrate this phenomenon by giving some examples of well-known mathematical problems which are independent from $\ISPii +\FRP$ (and also from large cardinals by Solovay's theorem \cite{LS:m}): the Suslin Hypothesis\footnote{Suslin Hypothesis asserts that every dense linear order without end points which is complete and satisfies the ccc condition must be separable (and hence isomorphic to the reals). It is equivalent to the non-existence of an $\omega_1$-Suslin tree. $\SH$ follows from $\MA$ by Solovay and Tennenbaum \cite{ST:Suslin} and is falsified by $\Diamond_{\omega_1}$ (see Jensen \cite{JENfine}).} $\SH$, Whitehead's Conjecture\footnote{We say that an abelian group $A$ is Whitehead if every surjective homomorphism $f$ from any abelian group $B$ onto $A$ with kernel $\Z$ splits, i.e.\ there exists some homomorphism $f^*: A \to B$ such that $f \circ f^*$ is the identity on $A$. It is known that every free group is Whitehead. Whitehead asked whether the converse holds as well. Stein \cite{Stein} proved that all countable Whitehead groups are free. We write $\mathsf{WC}(\kappa)$ to assert that there exists a non-free Whitehead group of size $\kappa$ (a counterexample to all Whitehead groups being free). The question turned out to be independent from $\ZFC$. By Shelah \cite{Shelah:abelian}, $\MA$ implies $\mathsf{WC}(\kappa)$ for every regular uncountable $\kappa$ (see Eklof \cite[Section 8]{Eklof:W}), while $\Diamond_{\omega_1}(S)$ for every stationary $S$ implies $\neg \WP$ (in $V = L$, $\neg \mathsf{WC}(\kappa)$ for all regular uncountable $\kappa$). See  the Eklof's article \cite{Eklof:W} for a survey of Shelah's construction.} $\WP$, and Kaplansky's Conjecture\footnote{Kaplansky's Conjecture asserts that every algebra homomorphism from $C(X)$, where $X$ is any infinite compact Hausdorff space and $C(X)$ is the Banach algebra of continuous real valued functions, into any other commutative Banach algebra is continuous (``automatic continuity''). $\CH$ implies $\neg \KP$ and $\PFA$ implies $\KP$ (however, $\KP$ is equiconsistent with $\ZFC$ using a ccc partial order). See the book of Dales and Woodin \cite{DW:ind} for more details, Todor{\v c}evi{\'c} \cite [p.\ 87]{T:partition} for more historical details regarding $\PFA$,  and articles \cite{Woodin:KH-large,DUMAS_2024,Aoki:KH} for more context a recent development regarding the compatibility of $\neg \KP$ with large continuum. See also Remark \ref{rm:KC}.} $\KP$ (only independence from $\FRP$ follows by current results, see below for more details). In fact, for the examples we will discuss, preservation under the Cohen forcing at $\omega$ is enough because Cohen forcing often decides mathematical problems in the opposite way than forcing axioms. 

In Example 4, we will use a direct argument using  appropriate Mitchell models to show consistency with $\FRP + \ISPii$ and also with Rado's Conjecture. Since Rado's Conjecture is destroyed by adding a single real, a direct argument is an alternative to an indestructibility argument. Let us denote by $\BA$ the Baumgartner's Axiom.\footnote{A set $A \sub \R$ is called $\dense$ if it has no least and greatest elements and for all $a < b$ in $A$, $A \cap (a,b)$ has size $\omega_1$. $\BA$ is the statement that all $\dense$ sets are order-isomorphic, thus extending Cantor's theorem on the categoricity of the rationals (as a linear order). $\CH$ implies the failure of $\BA$ while $\PFA$ proves $\BA$ by Baumgartner \cite{Baum:PFA} (however, the consistency strength of $\BA$ is just that of $\ZFC$ using a ccc forcing notion \cite{MR317934}).} We will show that in an appropriate Mitchell extension, the following hold:

\beq \ISPii + \FRP + \neg \BA.\eeq

This implies that $\BA$ is independent from $\ZFCc$ because $\MM$ implies $\FRP + \ISPii + \BA$.

In another Mitchell extension, the following hold:

\beq \RC + 2^\omega = \omega_2 + \neg \BA + \neg \WP + \neg \SH.\eeq See Section \ref{sec:uni} where we discuss the theory $\RC + 2^\omega = \omega_2$ from a more general perspective.\footnote{We have chosen examples of well-known mathematical problems which fit our article, but there are many other which can be investigated in a similar way. For examples the problem of the automorphisms of the Calkin algebra, see Farah \cite{MR2776359}, or the existence of a five-element basis for uncountable linear orders, see Moore \cite{MR2199228}.}

\medskip

\emph{Example 1 (absolute preservation): Both $\SH$ and $\WP$ are independent from $\ZFCc$.} One direction follows from the fact that $\ISPii + \FRP$ and also $\SH$ and $\WP$ are consequences of $\MM$. The converse direction follows by considering a Cohen extension  over any model of $\MM$ and using the fact that both compactness principles are preserved by Cohen forcing (see Section \ref{sec:abs}, items (\ref{a:FRP}) and (\ref{a:1})):

\begin{itemize}
\item A single Cohen real is enough to falsify $\SH$ by Shelah's result that it adds an $\omega_1$-Suslin tree.
\item Any number of Cohen reals of uncountable cofinality falsifies $\WP$ by a theorem of Bergfalk et al.\ \cite{bergfalk2024whiteheadsproblemcondensedmathematics}:

\begin{theorem}[\cite{bergfalk2024whiteheadsproblemcondensedmathematics}]\label{th:bls}
Suppose $A$ is a non-free abelian group of size $\omega_1$. Then in $V[\Add(\omega,\omega_1)]$, $A$ is not Whitehead. Moreover, if $\P$ is a ccc forcing in $V$, then $A$ stays non-Whitehead in $V[\Add(\omega,\omega_1) \x \P]$.
\end{theorem}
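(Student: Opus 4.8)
The plan is to translate the Whitehead property of $A$ into a uniformization statement about ladder systems on a stationary set and then exhibit a \emph{Cohen-generic} coloring that no function of the extension can uniformize. First I would dispose of the case where $A$ is not $\aleph_1$-free: then $A$ has a countable non-free subgroup $C$, which is non-Whitehead by Stein's theorem, and since subgroups of Whitehead groups are Whitehead (apply $\mathrm{Hom}(-,\Z)$ to $0\to C\to A\to A/C\to 0$ and use $\mathrm{Ext}^2(-,\Z)=0$, so $\mathrm{Ext}(A,\Z)\to\mathrm{Ext}(C,\Z)$ is onto), $A$ is already non-Whitehead in $V$. Because freeness of a fixed countable group is absolute, and any countable subgroup of $A$ arising in a ccc extension is contained in a ground-model countable subgroup, $C$ stays non-free and $A$ stays non-Whitehead in every ccc extension. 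So from now on I assume $A$ is $\aleph_1$-free.

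Next I would set up the combinatorics. Fix a continuous filtration $A=\bigcup_{\alpha<\omega_1}A_\alpha$ into countable free subgroups and let $E\sub\omega_1\cap\cof(\omega)$ be the associated stationary $\Gamma$-invariant, together with a ladder system $\seq{\eta_\delta}{\delta\in E}$, where $\eta_\delta:\omega\to\delta$ is increasing and cofinal. By the standard reduction (Eklof--Mekler \cite{modules}), every coloring $\bar c=\seq{c_\delta}{\delta\in E}$ with $c_\delta:\omega\to\Z$ yields an extension $0\to\Z\to B\to A\to 0$, and this extension splits precisely when $\bar c$ is \emph{uniformizable}, i.e.\ there are $f:\omega_1\to\Z$ and $g:E\to\omega$ with $f(\eta_\delta(n))=c_\delta(n)$ for all $\delta\in E$ and all $n\ge g(\delta)$. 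Thus it suffices to produce, in the extension, a coloring admitting no uniformization. (Since ccc forcing preserves $\omega_1$, keeps $E$ stationary, and keeps $A$ $\aleph_1$-free—every new countable subgroup sits inside some ground-model $A_\alpha$, hence is free—the correspondence remains valid in the extension.)

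Writing $\Add(\omega,\omega_1)$ as adding reals $\seq{r_\xi}{\xi<\omega_1}$ with generic $G$, set $c_\delta=r_\delta$ for $\delta\in E$. Suppose toward a contradiction that $(f,g)$ uniformizes $\bar c$ in $V[\Add(\omega,\omega_1)]$. Choose a countable $M\el H(\theta)$ containing names for $f,g$ and all relevant parameters with $\delta:=M\cap\omega_1\in E$ (possible since $E$ is stationary and the $M\cap\omega_1$ form a club). By ccc and elementarity, for each $\beta<\delta$ the value $f(\beta)$ is decided by a countable maximal antichain lying in $M$, whose conditions have support $\sub M\cap\omega_1=\delta$; hence $\restr{f}{\delta}$, and in particular $\seq{f(\eta_\delta(n))}{n<\omega}$, belongs to $V[\restr{G}{\delta}]$. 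But $r_\delta$ is Cohen-generic over $V[\restr{G}{\delta}]$, so it is not eventually equal to any real of that model, contradicting $f(\eta_\delta(n))=r_\delta(n)$ for $n\ge g(\delta)$. Hence $\bar c$ is not uniformizable and $A$ is not Whitehead. For the ``moreover'', note that $\Add(\omega,\omega_1)$ is Knaster, so $\Add(\omega,\omega_1)\x\P$ is ccc whenever $\P$ is; re-running the argument in $V[\Add(\omega,\omega_1)\x\P]$ gives $\restr{f}{\delta}\in V[\restr{G}{\delta},H]$ (with $H$ the $\P$-generic), while $r_\delta$ is still Cohen-generic over $V[\restr{G}{\delta},H]$ by mutual genericity of distinct Cohen coordinates, and the same contradiction ensues.

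The forcing computation itself—reflection into a countable $M$ with $M\cap\omega_1\in E$, plus the elementary fact that a Cohen real is eventually equal to no real of the inner model—is clean, and I do not expect it to be the bottleneck. The main obstacle I anticipate is the \emph{algebraic} bookkeeping behind the uniformization correspondence: verifying that the extension built from $\bar c$ is genuinely non-split exactly when $\bar c$ fails to uniformize, and that passage to the ccc extension neither collapses the $\Gamma$-invariant nor invalidates the correspondence. Both points rest on the absoluteness of freeness for countable groups and on new countable subgroups of $A$ being captured by the ground-model filtration, so the care lies in making these reductions precise rather than in the genericity argument.
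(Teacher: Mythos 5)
Your proof is being compared against a statement that the survey itself does not prove: Theorem \ref{th:bls} is quoted from Bergfalk--Lambie-Hanson--\v{S}aroch, so your argument has to stand on its own merits. The purely forcing-theoretic part of it does: the reduction to the $\aleph_1$-free case (Stein's theorem, closure of Whitehead groups under subgroups via $\mathrm{Ext}^2=0$, absoluteness of freeness for countable groups, ccc covering of new countable subgroups), the reflection into a countable $M \el H(\theta)$ with $M\cap\omega_1=\delta\in E$, the observation that $\restr{f}{\delta}$ lies in $V[\restr{G}{\delta}][H]$ because deciding antichains in $M$ have Cohen supports inside $\delta$, and the mutual-genericity contradiction (a Cohen real is eventually equal to no real of the side model) are all correct, including the Knaster/product absorption for the ``moreover'' clause.

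The genuine gap is the step you describe as ``the standard reduction (Eklof--Mekler)''. For an \emph{arbitrary} $\aleph_1$-free non-free $A$, with a ladder system $\seq{\eta_\delta}{\delta\in E}$ chosen independently of the group structure, there is no correspondence sending a coloring $\bar c$ to an extension $0\to\Z\to B\to A\to 0$ that splits exactly when $\bar c$ uniformizes. That correspondence is a theorem about groups \emph{constructed from} (or ``based on'') a ladder system, where the relations $k_{\delta,n}y_{\delta,n+1}=y_{\delta,n}+x_{\eta_\delta(n)}+c_\delta(n)e$ couple the coloring to the group; for a general $A$ the non-freeness of $A_{\delta+1}/A_\delta$ has nothing to do with your prescribed ladder points, so there is no mechanism for $c_\delta$ to enter the construction of $B$ or to be read off from a splitting. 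Worse, even a ladder derived from the group may fail to exist: the divisibility chains witnessing non-freeness of $A_{\delta+1}/A_\delta$ can have all their $A_\delta$-coefficients bounded below $\delta$, and (after Fodor) below a single $\beta^*$ for stationarily many $\delta$; in that case the ``splitting data'' to be defeated is a single real $\phi\in\mathrm{Hom}(A_{\beta^*},\Z)$ of the extension, which can depend on \emph{all} the Cohen coordinates, so your elementary-submodel/initial-segment argument does not apply and one needs a different (ccc-covering of the support of $\phi$) genericity argument. This cofinal-versus-bounded dichotomy is precisely the content of Shelah's theorem (Eklof--Mekler, Ch.\ XIII) that a non-free Whitehead group of size $\aleph_1$ yields a ladder system with the $2$-uniformization property; either you invoke that theorem (checking that the derived ladder system can be taken in $V$, so that your genericity computation applies to it), or you run a weak-diamond-style recursion in which the Cohen reals make the stage-$\delta$ choices of the extension and defeat each reflected partial splitting directly. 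Either way, what you defer as ``algebraic bookkeeping'' is the actual mathematical core of the theorem, not a routine verification.
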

\end{itemize}

One can observe that $\neg \SH$ and $\nonWP$ are consistent with an arbitrarily large $2^\omega$ of any uncountable cofinality, and by the preservation results for $\FRP$ and $\ISPii$, this can be partially extended to these principles as well:

The consistency of $\SH + \cf(2^\omega) > \omega_1$ can be shown using the standard ccc iteration which specializes all $\omega_1$-Aronszajn trees. Laver \cite{Laver:random} found an alternative argument and showed that a random forcing over a model of $\MA$ preserves $\SH$. Both forcings are ccc and hence by preservation results in Section \ref{sec:abs}, $\SH + \FRP$ is consistent with $2^\omega$ being any cardinal of uncountable cofinality. For $\ISPii$, the situation is more complicated because a general preservation theorem for ccc forcings is missing. A partial result follows from observations in Remark \ref{rm:overPFA}:  the random forcing applied over a model of $\PFA$ yields the consistency of $\SH + \nonwKH(\omega_1) + \TP(\omega_2)$ with any value of $2^\omega$ of uncountable cofinality.

For $\nonWP$, one can notice that for any $\kappa$ of uncountable cofinality, the forcing $\Add(\omega,\kappa)$ forces $\nonWP$: if there was in the generic extension a non-free Whitehead group $G$, then by taking a suitable automorphism of $\Add(\omega,\kappa)$, one can assume that some such group is added by the first $\omega_1$-many Cohen reals. By \cite[Theorem 6.3]{bergfalk2024whiteheadsproblemcondensedmathematics}, the next $\omega_1$-many Cohen reals make $G$ non-Whitehead, and the witness for being non-Whitehead survives through the rest of the Cohen forcing (since it is ccc). By the indestructibility results for $\FRP$ and $\ISPii$ in Section \ref{sec:abs}, it follows that if $\Add(\omega,\kappa)$ is applied over a model of $\FRP+\ISPii$, it forces $\nonWP + \FRP + \ISPii$ with $2^\omega=\kappa$ for any $\kappa \ge \omega_2$ of uncountable cofinality.

\medskip

\emph{Example 2 (absolute preservation): $\KP$ is independent from $\ZFC + \FRP + 2^\omega = \omega_2$.}

\begin{observation}\label{obs:KP}
$\KP$ is independent from $\ZFC + \FRP + 2^\omega = \omega_2$.
\end{observation}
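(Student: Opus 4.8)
The plan is to prove both halves of the independence by the device already used in Example~1: the Fodor-type Reflection Principle is preserved by every ccc forcing (Section~\ref{sec:abs}, item~(\ref{a:FRP})), so any statement whose truth value a suitable ccc forcing can toggle while keeping $2^\omega=\omega_2$ is automatically independent from $\ZFC+\FRP+2^\omega=\omega_2$. Concretely I must exhibit one model of $\FRP+2^\omega=\omega_2+\KP$ and one of $\FRP+2^\omega=\omega_2+\neg\KP$; both will be produced relative to large cardinals.

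For the direction $\FRP+2^\omega=\omega_2+\KP$ I would simply pass to a model of $\MM$ (consistent from a supercompact cardinal). There $\MM$ implies the Reflection Principle $\RP(\kappa)$ for all regular $\kappa\ge\omega_2$, hence the weaker $\FRP(\kappa)$ for all such $\kappa$, i.e.\ global $\FRP$; moreover $\MM$ implies $\PFA$, and $\PFA$ implies $\KP$ by the results recorded in the footnote to $\KP$ (Dales--Woodin); finally $\MM$ gives $2^\omega=\omega_2$. Thus the $\MM$-model already satisfies $\FRP+2^\omega=\omega_2+\KP$, with nothing further to check.

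For the direction $\FRP+2^\omega=\omega_2+\neg\KP$ I would first secure a model $W$ of global $\FRP$ together with $\CH$; this is consistent relative to a strongly compact cardinal ($\FRP$ being compatible with $\CH$, indeed with $\GCH$). In $W$ the implication $\CH\Rightarrow\neg\KP$ (the Dales--Esterle construction of a discontinuous homomorphism) gives $\neg\KP$. I then force with a ccc poset $\P$ that raises the continuum to $\omega_2$ while keeping a discontinuous homomorphism alive; that such a ccc forcing exists — equivalently, that $\neg\KP$ is compatible with $2^\omega=\omega_2$ and survives blowing up the continuum by ccc means — is exactly the content of the recent work \cite{Woodin:KH-large,DUMAS_2024,Aoki:KH} on the compatibility of $\neg\KP$ with large continuum. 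Since $\P$ is ccc it preserves all cardinals and cofinalities, so each $\FRP(\kappa)$, and hence global $\FRP$, is preserved by item~(\ref{a:FRP}); we thus land in a model of $\FRP+2^\omega=\omega_2+\neg\KP$.

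The main obstacle lies entirely in this last step. Passing to a forcing extension can enlarge $C(X)$ by adding new continuous functions, so a ground-model discontinuous homomorphism need not even remain defined on all of the extension's algebra, let alone stay discontinuous; one must either extend it or re-manufacture a discontinuous homomorphism in the extension. This analytic core is precisely what the cited papers supply, and it is also the reason Observation~\ref{obs:KP} is stated only over $\FRP$ and not over the stronger theory $\ZFCc$: whereas $\FRP$ is preserved by all ccc forcings, no matching ccc-preservation theorem for $\ISPii$ is available for the poset $\P$, so the Example~1 argument does not currently extend to establishing independence of $\KP$ over $\ZFC+\ISPii+\FRP+2^\omega=\omega_2$.
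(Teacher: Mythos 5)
Your proposal is correct and follows essentially the same route as the paper: the positive direction via a model of $\MM$ (which yields $\FRP$, $\KP$, and $2^\omega=\omega_2$ simultaneously), and the negative direction by starting from a model of $\FRP+\CH$ obtained from a strongly compact cardinal, then applying a ccc forcing that blows up the continuum to $\omega_2$ while forcing $\neg\KP$ (the paper invokes Woodin's theorem that $\Add(\omega,\omega_2)$ over a $\CH$ model forces $\neg\KP$, which is exactly the result you gesture at in \cite{Woodin:KH-large}), and finally using the ccc-indestructibility of $\FRP$. Your closing remark about why the argument does not extend to $\ISPii$ also matches the paper's comment following the observation.
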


\begin{proof}
$\KP$ is consistent with $\FRP$ because they are both consequences of $\MM$. For the other direction we use the fact that the Levy collapse of a strongly compact  cardinal to $\omega_2$ gives a model of $\FRP + \CH$, and hence also of $\nonKP$. By Woodin \cite{Woodin:KH-large},  Cohen forcing $\Add(\omega,\omega_2)$ forces $\neg \KP$ and by the indestructibility of $\FRP$ by all ccc forcings, this yields a model of $\neg \KP + \FRP + 2^\omega = \omega_2$.
\end{proof}

It seems to be open whether $\ZFC + \ISPii + \neg \KP$ is consistent, though.  A natural alternative to Levy collapse in Observation \ref{obs:KP}  is Mitchell forcing, but it is not even known whether  $V[\Add(\omega,\kappa)]$, where $\kappa$ is strongly compact and $\CH$ holds in $V$, satisfies $\neg \KP$.\footnote{Following Woodin's suggestion about morasses as a strengthening of $\CH$ in \cite{Woodin:KH-large}, Dumas \cite{DUMAS_2024} obtained $\nonKP$ with $2^\omega = \omega_3$ using Cohen forcing $\Add(\omega,\omega_3)$ over a ground model which in addition to $\CH$ contains a simplified morass. He suggests at the end of the paper that with higher morasses, $2^\omega$ can be equal to $\omega_n$ for any $n \ge 3$ with $\nonKP$.  The consistency of $\neg \KP$ with $2^{\omega} > \aleph_\omega$ seems to be open.}

\brm \label{rm:KC}
The argument for the consistency of $\KP$ in \cite{DW:ind} proceeds by constructing a generic extension via a ccc iteration which yields simultaneously $\MA$ and a combinatorial property which implies $\KP$. Todor{\v c}evi{\'c} noticed in \cite [Theorem 8.8]{T:partition} that this combinatorial property already follows from $\PFA$ (see \cite [p.\ 87]{T:partition} for more historical details on this point). It is open whether $\MA$ is necessary for $\KP$; see \cite{Aoki:KH} which constructs a model with $\nonKP$, $\neg \CH$ and a weak fragment of $\MA$.
\erm

\medskip

\emph{Example 3 (Mitchell model): cardinal invariants over $\ZFC + \SRii + \TP(\omega_2) + \neg \wKH(\omega_1) + \DSS(\omega_2)$.}

It is known that $\MA$ implies that most of the cardinal invariants have the maximal value $\omega_2$, in particular the tower number $\mathfrak{t}$ (which is provably below many of the other cardinal invariants) and all cardinal characteristics of the meager and null ideal. It follows that $\ISPii + \FRP$ are consistent with cardinal invariants being equal to $2^\omega = \omega_2$. It begs the question whether they are also consistent with other values of cardinal invariants.

By using preservation theorems for the principles $\SRii$, $\DSS(\omega_2)$,\footnote{Disjoint Stationary Sequence property, see Krueger \cite{KR:DSS} for more details.} $\TP(\omega_2)$ and $\nonwKH(\omega_1)$, Honzik and Stejskalov{\'a} showed in \cite{HS:ureg} that these principles are consistent with diametrically different patterns of cardinal invariants. In particular, they are consistent (for example) with 
\begin{equation} 
\omega_1 = \mathfrak{t} < \mathfrak{u}   < 2^\omega,
\end{equation}

where $\mathfrak{u}$ is the ultrafilter number. It is highly plausible that with more work, one can show this result also for $\ISPii$ because $\nonwKH(\omega_1)$ is as regards its behaviour and consequences close to $\ISPii$.

The proof starts by defining a ccc forcing $\Q$ which controls cardinal invariants in the generic extension $V[\Add(\omega,\kappa) * \dot{\Q}]$.  Then, preservation results for the Mitchell model and a projection analysis based on the forcing equivalence between $\M(\omega,\kappa) *\dot{\Q}$ and $(\Add(\omega,\kappa) * (\dot{\Q} \x \dot{R}))$ entails that the desired compactness principles hold in $V[\M(\omega,\kappa) * \dot{\Q}]$. The pattern of cardinal invariants in the extension $V[\M(\omega,\kappa) * \dot{\Q}]$ is computed using the fact that the quotient $\dot{R}$ does change the Baire space $\omega^\omega$.

\medskip

\emph{Example 4 (A direct argument in the Mitchell model): $\BA$ is independent from $\ZFCc$. Moreover, $\neg \BA$, $\neg \WP$ and $\neg \SH$ are consistent  with $\ZFCcR$.}

\brm $\RC$ is destroyed by adding any new real,\footnote{See Footnote \ref{ft:real} for details.} hence an indestructibility argument cannot be used for its consistency. $\ZFCcR$ proves $\TPii$ (see \cite{MR3600760}) and $\FRP$ (see \cite{rc_frp}). $\ZFCcR$ is an interesting theory which we discuss in some detail in Section \ref{sec:uni}. We do not know whether $\ZFCcR$ is consistent with $\SH$, $\BA$ or $\WP$. The problem is that $\RC$ contradicts forcing axioms, and not many forcings are known which force $\RC$.\footnote{It is natural to try to use other forcings which force various compactness principles, such as the tree properties, and check whether they force $\RC$ as well. To our knowledge, this has not been investigated yet (at least not in a published form).} \erm

By a result of Sierpi{\'n}ski \cite{MR41909}, $\CH$ implies a strong failure of $\BA$: there are $\dense$ sets $A, B$ such that there is no order-preserving embedding from $A$ into $B$ or conversely.\footnote{In fact, one can modify the argument we give in Lemma \ref{lm:Serp} by using a recursive construction indexed by the tree $2^{<\omega_1}$ and show that there is a family of $2^{\omega_1}$-many of $\dense$ sets of reals which are pairwise incomparable under the order-preserving embedding.} We will give a proof sketch of a this claim to make the argument self-contained (and because there seems to be no easily accessible published proof).

\begin{lemma}[Sierpi{\'n}ski]\label{lm:Serp}
Suppose $\CH$ holds, then there are $\dense$ sets $A^0,A^1$ wich are incomparable under the order-preserving embeddings, i.e., $A^0$ cannot be embedded in the order-preserving way into $A^1$, or conversely.
\end{lemma}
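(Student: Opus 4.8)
The plan is to build $A^0$ and $A^1$ simultaneously by a transfinite recursion of length $\omega_1$, diagonalizing against every potential order-preserving embedding in either direction. The point where $\CH$ enters is a counting observation: there are only $\omega_1$ strictly increasing functions $g:\R\to\R$. Indeed, such a $g$ is monotone and hence continuous off a countable set, so it is completely determined by $\restr{g}{\Q}$ together with the countable list of its jumps and their one-sided values; under $\CH$ the number of such data is $\aleph_1^{\aleph_0}=2^{\aleph_0}=\omega_1$. So I would fix an enumeration $\seq{g_\xi}{\xi<\omega_1}$ of all strictly increasing functions from $\R$ to $\R$.

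The second ingredient is a capturing lemma reducing embeddings to the $g_\xi$. Suppose $A^0,A^1$ are (topologically) dense in $\R$ and $f:A^0\to A^1$ is order-preserving. Define $\bar f(x)=\sup\set{f(a)}{a\in A^0,\ a<x}$; this is a well-defined, real-valued, strictly increasing function on $\R$ (strictness uses density of $A^0$), so $\bar f=g_\xi$ for some $\xi$. Since $\bar f$ is monotone it is continuous off a countable set $C$, and at each continuity point one checks $\bar f(a)\le f(a)$ and $\bar f(a)=\bar f(a^+)\ge f(a)$, whence $\bar f(a)=f(a)$ for every $a\in A^0\setminus C$. Thus $f$ agrees with $g_\xi$ on a co-countable subset of $A^0$. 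Consequently, to block \emph{all} embeddings $A^0\to A^1$ it suffices to arrange, for every $\xi$, that $D_\xi=\set{a\in A^0}{g_\xi(a)\notin A^1}$ is uncountable: for then $\set{a\in A^0}{g_\xi(a)\in A^1}$ is not co-countable, contradicting the lemma. The other direction is handled symmetrically with $A^0,A^1$ interchanged.

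The recursion then builds increasing countable approximations $A^0_\eta,A^1_\eta$ (taking unions at limits), together with two growing countable sets of reals \emph{forbidden} from ever entering $A^0$, resp.\ $A^1$. A bookkeeping function assigns to each stage $\eta$ a task consisting of some $g_\xi$, a direction, and a rational interval, so that every pair $(\xi,\text{direction})$ and every rational interval is assigned at uncountably many stages. At a stage with task $(g_\xi,\,0\to 1,\,(p,q))$ I would choose a real $a\in(p,q)$ that is not forbidden for $A^0$ and satisfies $g_\xi(a)\notin A^1_{<\eta}$; this is possible because $g_\xi$ is \emph{injective} on $(p,q)$ while $A^1_{<\eta}$ and the forbidden sets are countable and $(p,q)$ has size $2^{\aleph_0}$. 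I then put $a$ into $A^0$ and forbid $g_\xi(a)$ from $A^1$ henceforth. Since each task is met at uncountably many stages, $D_\xi$ becomes uncountable, while the interval tasks guarantee $\omega_1$ points of each set in every rational interval; adding cofinal and coinitial points then removes endpoints, so both sets are $\dense$.

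The main obstacle is the capturing lemma combined with running both diagonalizations at once without the forbidding constraints destroying density. This is resolved by the uniform bound that at every stage $\eta<\omega_1$ only countably many reals have been committed or frozen, whereas every interval still contains $2^{\aleph_0}$ candidates, so the density and diagonalization demands never collide. The decision to enumerate only \emph{strictly} increasing functions—legitimate because the envelope $\bar f$ of any embedding is strictly increasing—is exactly what makes each $g_\xi$ injective on intervals and keeps the selection step feasible.
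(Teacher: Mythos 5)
Your proof is correct, and it reaches the conclusion by a route that genuinely differs from the paper's at the decisive ``capturing'' step. Both arguments run a length-$\omega_1$ recursion under \CH{} that adds points to one set while permanently forbidding designated reals from the other; but the paper enumerates all \emph{countable} order-preserving functions $f_\alpha$ with dense domains and diagonalizes against each one by a \emph{single} point (it picks $x^0_\alpha$ in the completion of $\bar{A}^0_\alpha$, puts it into $A^0$, and forbids $f^c_\alpha(x^0_\alpha)$ from $A^1$, using that any embedding extending $f_\alpha$ has its value at $x^0_\alpha$ forced), leaving the final catching step---that every embedding $A^0\to A^1$ is trapped at some stage $\alpha$ with $\bar{A}^0_\alpha \sub \dom{f_\alpha}$---implicit. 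You instead enumerate the $2^{\aleph_0}=\omega_1$ \emph{total} strictly increasing functions $\R\to\R$, prove an explicit capturing lemma (the monotone envelope $\bar f$ of any embedding between topologically dense sets is strictly increasing and agrees with the embedding off a countable set), and consequently must make each disagreement set $D_\xi$ \emph{uncountable} rather than merely nonempty---which is exactly why your bookkeeping revisits each pair of function and direction uncountably often. The trade-off: the paper's scheme is more economical, one diagonalization point per enumerated function, whereas yours needs heavier bookkeeping but makes the catch airtight; in particular it sidesteps the gate condition $\bar{A}^0_\alpha \sub \dom{f_\alpha}$ (whose verification for an arbitrary embedding is the loosest point of the paper's sketch) and it handles explicitly, via continuity points of the envelope, the fact that a monotone extension only has forced values where it is continuous---a point the paper's appeal to the ``unique extension'' $f^c_\alpha$ glosses over. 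The small gaps that remain in your write-up (choosing the new point $a$ outside $A^0_{<\eta}$ so that the sets $D_\xi$ really accumulate $\omega_1$ \emph{distinct} elements, and requiring the bookkeeping to hit every triple of function, direction, and rational interval uncountably often rather than the two marginal conditions you state) are routine and clearly consistent with your ``committed or frozen'' mechanism.
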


\begin{proof}
Since $\R$ is separable,  order-preserving embeddings between $\dense$ sets are determined by countable dense subsets. By $\CH$, it is possible to enumerate all countable order-preserving embeddings with dense domains on the reals as $\seq{f_\alpha}{\alpha<\omega_1}$. 

We construct $A^0$ and $A^1$ recursively in $\omega_1$ steps by a back-and-forth argument, diagonalizing over $\seq{f_\alpha}{\alpha<\omega_1}$. We define on each side of the back-and-forth argument two sequences: sequences of countable sets $\seq{A^i_\alpha}{\alpha<\omega_1}$ increasing under inclusion, with unions $A^0$ and $A^1$, respectively, and sequences of reals $\seq{y^i_\alpha}{\alpha <\omega_1}$ (these will be forbidden from being in $A^i$), for $i < 2$. 

Let $A_0^0,A_0^1$ be arbitrary countable dense subsets of $\R$. Suppose the sequences are constructed for all $\beta < \alpha$ and all $A^0_\beta$ and $A^1_\beta$ are countable and dense. Let $\bar{A}^0_\alpha$ denote the union $\bigcup_{\beta<\alpha}A^0_\beta$, and similarly for $\bar{A}^1_\alpha$. Consider the function $f_\alpha$. 

In the ``back'' direction, we look at $f_\alpha$ as a function from $A^0$ to $A^1$. We can assume  $\bar{A}^0_\alpha \sub \dom{f_\alpha}$, otherwise $f_\alpha$ cannot be an embedding from $A^0$ to $A^1$. Since $\bar{A}_\alpha^{0}$ is dense, its completion $\bar{A}_\alpha^{0,c}$ has size $2^\omega$. Let $f^c_\alpha$ denote the unique extension of $f_\alpha$ to  $\bar{A}_\alpha^{0,c}$. Since $\bar{A}^1_\alpha$ is countable, there is some $x^0_\alpha \not \in \set{y^0_\beta}{\beta < \alpha}$ in $\bar{A}_\alpha^{0,c}$ such that $f^c_\alpha(x^0_\alpha) \not \in \bar{A}^1_\alpha$.  Add $x^0_\alpha$ to $\bar{A}^0_\alpha$ and set $y^1_\alpha = f^c_\alpha(x^0_\alpha)$. If $y^1_\alpha$ is not added in any further stage of the construction to $A^1$, $f_\alpha$ cannot be an embedding from $A^0$ into $A^1$.

In the ``forth'' direction, we look at $g_\alpha = f^{-1}_\alpha$. We can  assume  $\bar{A}^1_\alpha \sub \dom{g_\alpha}$ and proceed as in the previous case, defining $x^1_\alpha$ and $y^0_\alpha$.

Finally, for every pair of reals in $\bar{A}_\alpha^i$, $i<2$, add one new real between them, avoiding the sets $\set{y^i_\beta}{\beta \le \alpha}$, $i<2$ (to make the resulting sets eventually $\dense$), and denote the resulting sets $A^i_\alpha$, $i<2$.

Set $A^i = \bigcup_{\beta<\omega_1}A^i_\beta$. It is easy to see that there cannot be any order-preserving embedding between them.
\end{proof}

Baumgartner mentions in \cite{MR317934} without a proof that uncountably many reals can be added while preserving $\neg \BA$ over a model of $\CH$. An explicit proof, which implies that a strong failure of $\BA$ holds in Cohen and Random extensions adding any number of new reals, is stated in Switzer \cite[Theorem 4.1]{MR4939512} (the proof shows that a certain principle $\mathsf{U}_\kappa$  fails in Cohen and Random extensions, but it can be easily adapted to Lemma \ref{lm:B}).

\begin{lemma}\label{lm:B}
Suppose $\CH$ holds. Then in the Cohen extension $V[\Add(\omega,\kappa)]$ for adding $\kappa$-many new reals, there are $\dense$ sets $A^0,A^1$ wich are incomparable under the order-preserving embeddings.
\end{lemma}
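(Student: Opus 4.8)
The plan is to combine Sierpi\'nski's back-and-forth construction from Lemma \ref{lm:Serp} with a reduction of the large Cohen forcing to a single countable block of coordinates. First I would record the separability observation that makes an embedding a \emph{countable} object: if $f$ is an order-preserving embedding of $A^0$ into $A^1$, fix a countable $D\sub A^0$ dense in $A^0$; then the induced monotone map on $\R$ has at most countably many jump points, so $f(x)$ is pinned by $f\restr D$ for every $x\in A^0$ except at the countably many jump locations, and $f$ is reconstructible --- uniformly, in any model containing the ground-model sets $A^0,A^1$ --- from a countable code. Consequently, if $A^0,A^1\in V$ and $V[\Add(\omega,\kappa)]\models$ ``there is an order embedding $A^0\to A^1$'', then by the ccc (nice names) this code already lies in $V[\Add(\omega,I)]$ for some countable $I\sub\kappa$. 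Since $\Add(\omega,I)\iso\Add(\omega,\omega)$ in $V$ and $A^0,A^1$ are parameters from $V$ unaffected by this isomorphism, homogeneity of Cohen forcing lets me transfer non-existence from one block to all of them, so it suffices to build $A^0,A^1\in V$ with
\[
\mathbf{1}\Vdash_{\Add(\omega,\omega)} \text{there is no order-preserving embedding between } \check{A}^0 \text{ and } \check{A}^1 \text{ in either direction.}
\]

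Next, under $\CH$ the countable poset $\Add(\omega,\omega)$ admits only $\omega_1$ nice names for reals, hence only $\omega_1$ names $\seq{\dot h_\alpha}{\alpha<\omega_1}$ for codes of monotone maps; enumerating \emph{codes} (countable order-preserving maps with dense domain) rather than functions on the not-yet-built $A^0$ also sidesteps the apparent circularity, since for a ground-model real $x$ the induced value $\hat{\dot h}_\alpha(\check x)$ is a name definable from $\dot h_\alpha$ and $\check x$. I would then run the back-and-forth of Lemma \ref{lm:Serp} through these $\omega_1$ names, building increasing countable dense approximations with unions $A^0,A^1$, together with countable sets of reals forbidden from each side; interleaved density steps (adding a point in every gap, avoiding the current forbidden sets) guarantee the unions are \dense\ and have no endpoints. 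At stage $\alpha$ the task is to force with value $\mathbf{1}$ that $\dot h_\alpha$ induces no embedding $A^0\to A^1$; as $q\Vdash\neg\varphi$ on a dense set of $q$ already yields $\mathbf{1}\Vdash\neg\varphi$, it is enough to treat the countably many conditions $p$ one at a time. Fixing $p$: if some $x$ in the completion of the current approximation and some $q\le p$ satisfy $q\Vdash\hat{\dot h}_\alpha(\check x)\notin\check V$, I add $x$ to $A^0$ and finish below $q$, because $A^1\sub V$ forces the image out of $A^1$. Otherwise every such value is forced into $V$, so for each candidate $x$ (ranging over the completion, of size $2^\omega=\omega_1$) I may fix $q_x\le p$ and $z_x\in V$ with $q_x\Vdash\hat{\dot h}_\alpha(\check x)=\check{z}_x$. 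Were all $z_x$ to lie in the countable set already placed into $A^1$, then since both $\Add(\omega,\omega)$ and that set are countable, uncountably many $x$ would share one pair $(q_x,z_x)=(q^*,z^*)$; but then $(\hat{\dot h}_\alpha)^{-1}(z^*)$, being convex, is a nondegenerate interval meeting the dense set $A^0$ in infinitely many points, contradicting that $q^*$ forces $\hat{\dot h}_\alpha\restr A^0$ injective (any condition forcing ``embedding'' forces injectivity, and one forcing ``not embedding'' is already a witness). Hence some $x$ has $z_x$ outside the current $A^1$; I add $x$ to $A^0$, forbid $z_x$ from $A^1$ henceforth, and $q_x\Vdash\hat{\dot h}_\alpha(\check x)=\check{z}_x\notin A^1$. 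Treating every condition, then symmetrically the direction $A^1\to A^0$, spoils $\dot h_\alpha$; since at each countable stage the accumulated forbidden sets are still countable, the $\omega_1$ candidates always suffice.

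The main obstacle is the reduction step rather than the diagonalization: one must pass from the fixed ground-model sets to a statement forced by $\mathbf{1}$ over the single countable forcing $\Add(\omega,\omega)$, which needs both the countable-code observation (so that any hypothetical embedding descends to a countable block) and homogeneity of Cohen forcing (so that $\mathbf{1}$-forced non-existence over one block transfers to all of them, since a priori the embedding could be spread across $\kappa$-many coordinates). Once that reduction is in place, the per-stage argument is exactly Sierpi\'nski's back-and-forth, with the single new ingredient that an image forced out of $V$ is automatically out of $A^1\sub V$ --- precisely the feature that makes the ground-model construction robust against the newly added reals. For the random-real version one replaces nice names by Borel codes and uses the analogous homogeneity, the combinatorial core being identical.
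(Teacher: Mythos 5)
Your proof is correct, but note what it is being compared against: the paper does not contain its own argument for Lemma \ref{lm:B}. It defers to Switzer \cite[Theorem 4.1]{MR4939512}, which shows that a principle $\mathsf{U}_\kappa$ fails in Cohen and random extensions, together with the remark that the proof ``can be easily adapted.'' Your text is precisely such an adaptation, written out in full: (i) an order-preserving embedding between $\dense$ sets is reconstructible from a countable code (its restriction to a countable dense subset together with the countably many jump points, whose witnessing intervals are pairwise disjoint and hence countable); (ii) by the ccc and nice names, any such code in $V[\Add(\omega,\kappa)]$ already lies in $V[\Add(\omega,I)]$ for a countable $I \sub \kappa$; (iii) the block isomorphism $\Add(\omega,I) \iso \Add(\omega,\omega)$ reduces the whole lemma to arranging $\mathbf{1} \Vdash_{\Add(\omega,\omega)}$ ``no embedding in either direction''; and (iv) under $\CH$ a Sierpi\'nski-style diagonalization against the $\omega_1$ nice names for codes achieves this. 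This is the natural ``names version'' of Lemma \ref{lm:Serp} and is sound; your closing remark about Borel codes also matches the random-real half of the cited theorem. One small simplification: since your construction makes the non-existence forced by $\mathbf{1}$, step (iii) needs only the isomorphism between blocks; homogeneity proper would be needed only if you had produced a single condition forcing non-embeddability.

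Three points deserve more care in a full write-up, though none is a gap. First, your pigeonhole case is a witness, not a reductio: if uncountably many $x$ share one pair $(q^*,z^*)$, then $q^*$ itself forces $\hat{\dot h}_\alpha$ to be constant on a nondegenerate interval, which meets the already-committed dense approximation to $A^0$, so $q^*$ forces non-injectivity on $A^0$ and the condition $p$ is handled by $q = q^*$ with no commitment at all; the case analysis should be stated as a trichotomy rather than as a contradiction (your parenthetical acknowledges this, but the structure should be explicit). Second, the candidate reals $x$ must be restricted from the outset to those not yet forbidden from $A^0$ (still $\omega_1$ many, since the forbidden sets stay countable); otherwise Case 1 could in principle produce only forbidden witnesses, which you cannot add. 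Third, the induced value $\hat{\dot h}_\alpha(\check x)$ may be forced to be undefined (empty or unbounded supremum, or $\dot h_\alpha$ not a valid code at all); this alternative should be folded into Case 1, since a point of $A^0$ at which the induced map is undefined, or a name that is not a code, equally witnesses that no embedding arises from $\dot h_\alpha$. With these adjustments your argument is complete and self-contained, which is more than the paper itself provides.
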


Now we can prove the consistency of $\neg \BA$, actually a strong version of the negation from Lemma \ref{lm:Serp}, with $\ZFCc$:

\begin{observation}\label{obs:BA}
Suppose $\kappa$ is a supercompact cardinal, $\CH$ holds. Then in the standard Mitchell model $V[\M(\omega,\kappa)]$, $\ISPii + \FRP$ and $\neg \BA$ hold.
\end{observation}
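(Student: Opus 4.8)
The plan is to verify the three conclusions separately, exploiting the factorisation $\M(\omega,\kappa) = \Add(\omega,\kappa) * \dot{R}$ with $\dot{R}$ forced to be $\omega_1$-distributive, so that $\kappa$ becomes $\omega_2$, $2^\omega = \omega_2$, and—crucially—$\dot{R}$ adds no new reals over $V[\Add(\omega,\kappa)]$. The standing hypotheses $\CH$ in $V$ and the supercompactness of $\kappa$ are used at different points: supercompactness for the two compactness principles, and $\CH$ for the failure of $\BA$.

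For $\ISPii$ I would simply invoke the theorem of Viale and Weiss \cite{VW:PFA} (recalled in Section \ref{sec:con}) that the standard Mitchell collapse of a supercompact cardinal forces the ineffable slender tree property at $\omega_2$. For $\FRP$ I would argue that the same elementary-embedding / master-condition machinery also yields the Reflection Principle $\RP$ (indeed $\WRP$) at \emph{every} regular $\lambda \ge \omega_2$ of $V[\M(\omega,\kappa)]$: the supercompactness of $\kappa$ gives stationary reflection at each such $\lambda$ to sets that, after the collapse of the interval $(\omega_1,\kappa)$, have size $\omega_1$, and this reflection is preserved because the Mitchell forcing is $\kappa$-cc of size $\kappa$. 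Since $\FRP$ is a consequence of $\RP$ (Section \ref{sec:FRP}), the global $\FRP$ follows. The point needing care here is that one needs reflection not only at $\omega_2 = \kappa$ but at all regular $\lambda > \kappa$, which is exactly why the full strength of supercompactness (rather than the weak compactness sufficing for $\TP(\omega_2)$) is invoked.

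The substantive part is $\neg\BA$. Since $\CH$ holds in $V$, Lemma \ref{lm:B} provides in $V[\Add(\omega,\kappa)]$ two $\dense$ sets $A^0, A^1 \sub \R$ that are incomparable under order-preserving embeddings. I would then transfer this incomparability to $V[\M(\omega,\kappa)] = V[\Add(\omega,\kappa)][R]$. As $\dot{R}$ is $\omega_1$-distributive it adds no reals, so $A^0$ and $A^1$ are unchanged and, $\omega_1$ being preserved, remain $\dense$. The key is that the existence of an order-embedding between separable suborders of $\R$ is absolute between models with the same reals: given an embedding $f : A^0 \to A^1$ in $V[\M(\omega,\kappa)]$ and a fixed countable $D \sub A^0$ dense in $A^0$ with $D \in V[\Add(\omega,\kappa)]$, the restriction $g = \restr{f}{D}$ is coded by a real, hence lies in $V[\Add(\omega,\kappa)]$. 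From $g$ one computes there the monotone envelope $a \mapsto [\ell(a),r(a)]$, where $\ell(a)=\sup\{g(d):d\in D,\ d<a\}$ and $r(a)=\inf\{g(d):d\in D,\ d>a\}$. At the co-countably many continuity points one has $f(a)=\ell(a)=r(a)$, so that value lies in $A^1$; at the countably many jump points $A^1 \cap (\ell(a),r(a))$ contains $f(a)$, hence is nonempty—and both are absolute statements about reals and the fixed sets $A^0,A^1 \in V[\Add(\omega,\kappa)]$. Reassembling these data (filling the countably many pairwise-disjoint gap intervals, each of which meets $A^1$ by absoluteness) produces an order-embedding $A^0 \to A^1$ already in $V[\Add(\omega,\kappa)]$, contradicting Lemma \ref{lm:B}; the symmetric argument handles embeddings $A^1 \to A^0$. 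Thus $A^0,A^1$ remain incomparable, witness the failure of $\BA$, and $\neg\BA$ holds in $V[\M(\omega,\kappa)]$.

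The main obstacle is precisely this transfer. The $\omega_1$-distributivity of $\dot{R}$ guarantees no new reals, but it does \emph{not} by itself rule out a new order-embedding, which is an object of size $\omega_1$ and could in principle be a genuinely new $\omega_1$-sequence of old reals. The resolution is the absoluteness observation above—an order-embedding between separable orders is reconstructible from its countable restriction up to a countable, absolutely-fillable correction at its jumps—so that non-embeddability, once established in the real-preserving inner model $V[\Add(\omega,\kappa)]$, cannot be undone by the distributive quotient. I expect this step to be where the care is concentrated; the $\ISPii$ clause is a citation and the $\FRP$ clause follows the standard reflection pattern.
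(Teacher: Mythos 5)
Your overall route is exactly the paper's: the paper also disposes of $\ISPii$ and $\FRP$ by citation (it takes the version of $\M(\omega,\kappa)$ from \cite{8fold} and calls the arguments standard, which is the level of detail of your first two paragraphs), and it proves $\neg\BA$ by producing the incomparable pair $A^0,A^1$ of Lemma \ref{lm:B} in $V[\Add(\omega,\kappa)]$ and then observing that the $\sigma$-distributive quotient $\dot{R}$ ``does not add new countable embeddings,'' so the pair stays incomparable in $V[\M(\omega,\kappa)]$. So the transfer step you isolate is indeed the only real content, and reducing a putative embedding $f\colon A^0\to A^1$ to countable data is the right idea.

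However, your reconstruction of an embedding inside $V[\Add(\omega,\kappa)]$ has a genuine slip. At a jump point $a$ you claim $f(a)\in A^1\cap(\ell(a),r(a))$, but $f(a)$ may equal $\ell(a)$ or $r(a)$ (the sup or inf need not be attained yet can equal $f(a)$ in the limit); worse, $A^1\cap(\ell(a),r(a))$ can be \emph{empty}, because the $\omega_1$-density of $A^1$ only concerns intervals whose endpoints lie in $A^1$, and $\ell(a),r(a)$ are sups/infs of $g$-values which need not belong to $A^1$. So ``each gap meets $A^1$'' is not available, and even with closed intervals, two gaps sharing an endpoint can force colliding choices and destroy injectivity. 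All of this disappears if you apply the distributivity once more instead of re-selecting values: the jump set $J=\{a\in A^0 : \ell(a)<r(a)\}$ is definable from $g$ and $A^0$ in $V[\Add(\omega,\kappa)]$ and is countable (the open gaps are pairwise disjoint intervals of $\R$), so $\restr{f}{(D\cup J)}$ is a countable function of reals and hence lies in $V[\Add(\omega,\kappa)]$. But then $f$ itself lies in $V[\Add(\omega,\kappa)]$: its values on $D\cup J$ are given by that countable function, and at every other point its value is forced to be $\ell(a)=r(a)$. Since order-preservation and injectivity of a function between the fixed sets $A^0,A^1$ are absolute, this contradicts Lemma \ref{lm:B} directly, with no gap-filling needed. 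With this repair your proof coincides with the paper's, which compresses the entire step into the remark that $\dot{R}$ adds no new countable embeddings.
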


\begin{proof}
Let $\M(\omega,\kappa)$ be a forcing notion as in \cite{8fold} which  forces $\ISPii$ and $\FRP$ (this is a standard argument). Let us show that the strong form of the negation of $\BA$ from Lemma \ref{lm:Serp} holds in this model. $\M(\omega,\kappa)$  is equivalent to a forcing $\Add(\omega,\kappa) * \dot{R}$, where $\Add(\omega,\kappa)$ forces that $\dot{R}$ is $\sigma$-distributive. By Lemma \ref{lm:B}, there are pairwise incomparable $\dense$ sets $A^0, A^1$  in $V[\Add(\omega,\kappa)]$. Since $\dot{R}$ is forced to be $\sigma$-distributive, and hence does not add new countable embeddings, they remain incomparable in $V[\M(\omega,\kappa)]$.
\end{proof}

Since $\MM$ proves $\ZFCc + \BA$, $\BA$ is neither proved or refuted from $\ZFCc$.

\brm
Baumgartner's Axiom can be generalized to axioms which deal with $\kappa$-dense subsets of topological spaces $X$, denoted $\mathsf{BA}_\kappa(X)$, with order-preserving embeddings replaced by continuous embeddings.  Thus, in this notation, $\mathsf{BA}_{\omega_1}(\R)$ is equivalent to $\BA$, where $\R$ is endowed with the standard topology. Switzer \cite{MR4939512} considers two natural weakenings of $\mathsf{BA}_\kappa(X)$, denoted $\mathsf{BA}^{-}_\kappa(X)$ and $\mathsf{U}_\kappa(X)$, for arbitrary Polish spaces $X$, and shows that the (strict) weakening $\mathsf{BA}^{-}_\kappa(X)$ retains some of the strong consequences of $\BA$ such as $2^\omega = 2^{\omega_1}$, while $\mathsf{U}_\kappa$ does not. In \cite[Theorem 4.1]{MR4939512} he proves that $\mathsf{U}_\kappa$  fails in Cohen and Random extensions (we mentioned this result already for Lemma \ref{lm:B}). $\BA$ is interesting also from the point of cardinal invariants of the continuum: Todorcevi{\'c} showed in \cite{T:partition} that $\BA$ implies $\mathfrak{b} > \omega_1$ (it is still open whether it implies $\mathfrak{p}>\omega_1$). Since $\mathfrak{b}=\omega_1$ in the Mitchell model, this gives an alternative argument for Observation \ref{obs:BA}.
\erm

Let us now show that $\RC + 2^\omega = \omega_2$ does not prove $\BA,\WP$, or $\SH$.

\begin{theorem}\label{obs:BWS}
Suppose $\CH$ hold, there is an $\omega_1$-Suslin tree, $\kappa$ is strongly compact, and $\M(\omega,\kappa)$ is a Mitchell forcing with sparse collapses\footnote{It is sufficient that collapses, i.e., the conditions on the second coordinate of the Mitchell forcing, are defined only at coordinates with cofinality $\ge \omega_2$. See Cummings et al.\ \cite{8fold} for more details regarding variants of the Mitchell forcing.}. It forces $\ZFCcR$, together with $\neg \BA, \neg \SH$, and $\neg \WP$.
\end{theorem}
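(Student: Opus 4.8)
The plan is to verify the conjuncts $\RC$, $2^\omega=\omega_2$, $\neg\BA$, $\neg\SH$ and $\neg\WP$ one at a time in $W=V[\M(\omega,\kappa)]$, exploiting the two standard presentations of Mitchell forcing: the factorisation $\M(\omega,\kappa)\equiv\Add(\omega,\kappa)*\dot R$ with $\dot R$ forced $\sigma$-distributive, and the projection onto $\M(\omega,\kappa)$ from $\Add(\omega,\kappa)\x\T$ with $\T$ being $\sigma$-closed. The cardinal arithmetic is routine: $\M(\omega,\kappa)$ is $\kappa$-cc of size $\kappa$, collapses $(\omega_1,\kappa)$, and forces $\kappa=\omega_2$ with $2^\omega=\omega_2$. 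That $\RC$ holds in this extension is exactly the Todor{\v c}evi{\'c}--Zhang analysis of the Mitchell model over a strongly compact cardinal (\cite{T:Rado,MR4094551}); I would observe that their argument for $\RC$ is insensitive to the sparse placement of the collapse conditions, so it survives in this variant, whence $\TPii$ and $\FRP$ follow as well by \cite{MR3600760,rc_frp}. This secures $\ZFCcR$.

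For $\neg\BA$ I would simply rerun Observation \ref{obs:BA}: since $\CH$ holds in $V$, Lemma \ref{lm:B} provides in $V[\Add(\omega,\kappa)]$ a pair of $\omega_1$-dense sets $A^0,A^1$ incomparable under order-preserving embeddings, and as $\dot R$ is $\sigma$-distributive it adds no reals, hence no new countable order-preserving embeddings, so $A^0,A^1$ stay incomparable in $W$. For $\neg\SH$ I would show that the ground-model $\omega_1$-Suslin tree $S$ (taken normal without loss of generality) remains Suslin. Working inside $V[\Add(\omega,\kappa)\x\T]\supseteq W$: the $\sigma$-closed factor $\T$ preserves Suslinity, since an $\omega_1$-recursion deciding a putative new uncountable antichain or cofinal branch reflects it back to $V$ (every limit below $\omega_1$ has cofinality $\omega$, so $\sigma$-closure supplies the lower bounds at limit stages); and the Knaster forcing $\Add(\omega,\kappa)$ preserves Suslinity because $\Add(\omega,\kappa)\x S$ is ccc (Knaster $\times$ ccc), so no new uncountable antichain appears, and a normal $\omega_1$-tree with no uncountable antichain can have no cofinal branch. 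Thus $S$ is Suslin in $V[\Add(\omega,\kappa)\x\T]$, and since Suslinity is downward absolute (and $\omega_1$ is preserved) it is Suslin in $W$, giving $\neg\SH$. Here the role of the sparse collapses is to guarantee this clean product decomposition with $\T$ genuinely $\sigma$-closed and the tree $S$ untouched by the collapse coordinates.

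The crux is $\neg\WP$, which, unlike the previous two negations, is a \emph{universal} statement: I must show every non-free abelian group $A$ of size $\omega_1$ in $W$ is non-Whitehead. The engine is Bergfalk et al.'s Theorem \ref{th:bls}, which makes every non-free group of size $\omega_1$ non-Whitehead after adding $\ge\omega_2$ Cohen reals, with non-Whiteheadness robust under a further ccc factor. The strategy is a tail analysis: by $\kappa$-cc, such an $A$ is captured by a piece $\M_0$ of the iteration of size $<\kappa$, and I would use the sparse-collapse structure to factor the remainder as $\ge\omega_2$ Cohen reals generic over $V[\M_0]$ together with a residual factor that is either ccc or $\sigma$-closed, so that the ``moreover'' clause of Theorem \ref{th:bls} applied over $V[\M_0]$ forces $A$ non-Whitehead in $W$. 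The main obstacle is precisely this capture-and-factor step: a group living in $W$ may genuinely depend on the $\sigma$-distributive quotient and not merely on the Cohen reals, so one must locate it inside an intermediate model leaving enough Cohen reals generic over it, and argue that the residual collapse does not reintroduce a uniformisation that would render $A$ Whitehead. Equivalently, the heart of the proof is showing that the sparse-collapse quotient preserves the anti-uniformisation responsible for $\neg\WP$ in the pure Cohen extension; this preservation is where the real work lies.
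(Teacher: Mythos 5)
Your treatments of $\ZFCcR$, $\neg\BA$, and $\neg\SH$ are correct and essentially the paper's own: Zhang's analysis of the Mitchell model over a strongly compact for $\RC + 2^\omega=\omega_2$, a rerun of Observation \ref{obs:BA} through the factorization $\Add(\omega,\kappa)*\dot R$ with $\dot R$ $\sigma$-distributive, and preservation of a ground-model Suslin tree first by the $\sigma$-closed factor $\T$ (your recursion argument is fine; all limits below $\omega_1$ have countable cofinality) and then by the Knaster forcing $\Add(\omega,\kappa)$, followed by downward absoluteness into $V[\M(\omega,\kappa)]$.

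The $\neg\WP$ part, however, contains a genuine gap, and you in effect concede it: you reduce everything to the claim that the sparse-collapse quotient preserves the non-Whiteheadness produced in the pure Cohen extension, declare this to be ``where the real work lies,'' and stop. That claim is precisely what must be proved, and the route you sketch cannot close it as stated: the ``moreover'' clause of Theorem \ref{th:bls} only tolerates a \emph{ccc} factor $\P$ from the ground model, whereas the residual quotient of $\M(\omega,\kappa)$ over its Cohen part is $\sigma$-distributive, so $W$ is not of the form $V[\M_0][\Add(\omega,\omega_1)\x\P]$ with $\P$ ccc and the theorem cannot be invoked to conclude. The paper closes the gap with two moves missing from your outline. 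First, it trades the universal statement (``$A$ is non-Whitehead in $W$'') for the preservation of a \emph{single witness}: after capturing $A$ in $V[\M_\alpha]$ and applying Theorem \ref{th:bls} to the tail Cohen forcing, it fixes one non-splitting homomorphism $f:B\to A$; since $|B|=\omega_1$, a permutation of the Cohen coordinates places $f,B$ in the extension by $\Add_{[\alpha,\alpha+\omega_1)}$, and sparseness of the collapses (no collapse coordinates of cofinality $<\omega_2$, hence none in $[\alpha,\alpha+\omega_1)$) guarantees that $V[\M_\beta]$, $\beta=\alpha+\omega_1$, \emph{is} that Cohen extension of $V[\M_\alpha]$, so the witness lives in an intermediate Mitchell model. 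Second, over $V[\M_\beta]$ it absorbs the tail of $\M$ into the product $\T\x\Add_{[\beta,\kappa)}$ ($\omega_1$-closed times ccc) and runs a diagonalization --- a decreasing $\omega_1$-sequence in $\T$ against maximal antichains of $\Add_{[\beta,\kappa)}$ deciding the values $\dot f^*(a_\alpha)$ --- showing that any splitting homomorphism for $f$ added by the product already lies in $V[\M_\beta][\Add_{[\beta,\kappa)}]=V[\M_\alpha][\Add_{[\alpha,\kappa)}]$, where $f$ does not split; contradiction. Without these two steps (witness-fixing via permutation and sparseness, plus the term-forcing diagonalization), your argument does not establish $\neg\WP$.
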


\begin{proof}
Let $\M := \M(\omega,\kappa)$ be the Mitchell forcing as in Zhang \cite{MR4094551} who showed that it forces $\ZFCcR$. Let us also denote the restriction of the Cohen forcing $\Add(\omega,\kappa)$ to an interval $I$ on $\kappa$ by $\Add_I$ and the truncation of $\M$ to stage $\alpha$ by $\M_\alpha$.

\emph{$\neg \BA$ holds in this extension.} This is exactly as the proof of Observation \ref{obs:BA}.

\emph{$\neg \SH$ holds in this extension.}  We know there is a projection from $\M$ to a product $\Add(\omega,\kappa) \x \T$ where $\T$ is $\sigma$-closed. Let $T$ be an $\omega_1$-Suslin tree in $V$. It is easy to check that due to its $\sigma$-closure, $\T$ does not add uncountable antichains to $T$, and nor does $\Add(\omega,\kappa)$ over $V[\T]$ because it is $\omega_1$-Knaster there. This implies that $T$ is Suslin in $V[\Add(\omega,\kappa) \x \T]$, and hence also in $V[\M]$ which is its submodel.

\emph{$\neg \WP$ holds in this extension.} Suppose $A$ is a non-free abelian group of size $\omega_1$ in $V[\M]$. We wish to show that $A$ is non-Whitehead (we will write ``non-W'') in $V[\M]$.

Due to $\M$ being $\kappa$-cc, the group $A$ appears at some stage $V[\M_\alpha]$, $\alpha < \kappa$. Let us work in $V[\M_\alpha]$. $A$ is non-free in $V[\M_\alpha]$ due to the downward preservation of this property. By Theorem \ref{th:bls}, the tail of the Cohen forcing $\Add_{[\alpha,\kappa)}$ makes $A$ non-W. It follows that $A$ is a non-W group in $V[\M_\alpha][\Add_{[\alpha,\kappa)}].$ Let us fix  in $V[\M_\alpha][\Add_{[\alpha,\kappa)}]$ a homomorphism \begin{equation}\label{eq:split} f: B \to A \end{equation} which does not split. Both $f$ and $B$ have size $\omega_1$, so we can assume by permuting the generic for $\Add_{[\alpha,\kappa)}$ if necessary that $f,B$ are added by $\Add_{[\alpha,\alpha+\omega_1)}$ over $V[\M_\alpha]$. Let us denote $\alpha +\omega_1$ by $\beta$. Since the Mitchell forcing is sparse, there are no collapses in the interval $[\alpha,\beta)$, hence also the model $V[\M_\beta]$ contains $f: B \to A$ which does not split in $V[\M_\beta]$, i.e.\ there is no homomorphism $f^*: A \to B$ in $V[\M_\beta]$ such that $f \circ f^*$ is the identity on $A$. Let us work over $V[\M_\beta]$, and let $\T$ be  the $\omega_1$-closed term forcing such that $\T \x \Add_{[\beta,\kappa)}$ projects onto the tail of the Mitchell forcing. We will argue that $\T \x \Add_{[\beta,\kappa)}$ does not add a splitting homomorphism to $f$. It follows that there cannot be a splitting homomorphism in $V[\M]$, which finishes the proof.

This is shown using the standard method of working in $V[\M_\beta]$ and diagonalizing over antichains in $\Add_{[\beta,\kappa)}$ and building a decreasing sequence of conditions in $\T$ by recursion on $\omega_1$.  In some detail, suppose for contradiction that $\dot{f^*}$ is forced by $\Add_{[\beta,\kappa)} \x \T$ over $V[\M_\beta]$ to be a splitting homomorphism for $f:B \to A$ which is not in $V[\M_\beta][\Add_{[\beta,\kappa)}]$, and let $\seq{a_\alpha}{\alpha <\omega_1}$ be some enumeration of $A$. Build a decreasing sequence in $\T$, $\seq{t_\alpha}{\alpha < \omega_1}$, and a sequence of antichains $\seq{X_\alpha}{\alpha<\omega_1}$ in $\Add_{[\beta,\kappa)}$ such that if $G$ is any generic for $\Add_{[\beta,\kappa)}$, then in $V[\M_\beta][G]$, there is for each $X_\alpha$ exactly one condition $p_\alpha \in X_\alpha \cap G$ and  $(p_\alpha,t_\alpha)$ determines the value of $\dot{f^*}(a_\alpha) := b_\alpha$ in $B$. It is easy to see that the function in $V[\M_\beta][G]$ which maps $a_\alpha$ to $b_\alpha$ is a splitting homomorphism for $f$ in $V[\M_\beta][G]$, which is a contradiction with (\ref{eq:split}).
\end{proof}

\medskip
\begin{center} *** \end{center}
\medskip

One may consider the principles $\SH, \BA, \WP$  on higher cardinals and ask whether compactness principles start to play some role. 

Suslin Hypothesis at $\omega_2$, $\SHii$,  becomes a genuine compactness principles which is implied by $\TP(\omega_2)$ (and hence also by $\PFA$) in the context of $\neg \CH$. But it can also be considered as a stand-alone principle with $\CH$. However, $\SHii$ (with or without $\CH$) is no longer equivalent to a characterization of a well-known mathematical object (like the reals for $\SH$), and hence its general appeal is smaller.  Still,  $\SHg(\kappa)$ for a regular $\kappa$ is an interesting principle from the set-theoretic perspective  (see Section \ref{sec:Suslin} for some more details on Suslin Hypothesis).

Whitehead's Conjecture can be formulated for abelian groups of size $\omega_2$ (and bigger), but this will not make the problem related to large cardinals or compactness because $\MA$ entails $\WPg(\kappa)$ and $V = L$ entails $\neg \WPg(\kappa)$ for every regular uncountable $\kappa$. It makes sense to generalize the question more extensively, with focus on richer algebraical structures like modules, or with heavier use of the homology context. See for instance a comprehensive two-volume monograph by G{\"o}bel and Trlifaj \cite{MR2985554, MR2985654} which among other things investigates generalizations of the Whitehead's Conjecture to modules using advanced set-theoretic concepts like Shelah's Uniformization Principle (see in particular Chapter 11 of \cite{MR2985554}). However, no large cardinal principles are mentioned in these results. There are also works of Bergfalk, Lambie-Hanson and Hru{\v s}{\'a}k \cite{MR4275058, MR4568267} on simultaneous vanishing of higher derived limits in the homology algebra. It is of some interest that the first article \cite{MR4275058} proved the required result using a large cardinal hypothesis while the second article \cite{MR4568267} reproved the result just from the consistency of $\ZFC$. 

Whitehead's Conjecture has been generalized in yet another direction which we surveyed in some details in Sections \ref{sec:ab} and \ref{sec:delta}. Since every countable Whitehead group is free and all subgroups of a Whitehead group are Whitehead, all Whitehead groups of size $\omega_1$ are \emph{almost-free}, i.e., all subgroups of strictly smaller sizes are free. This leads to a notion of compactness which postulates that almost-free groups should be free. As we saw, this concept necessarily involves large cardinals, and this form of compactness is provable false below $\aleph_{\omega^2}$.

The principle $\BA$ can be generalized to $\BAii$, referring now to $\omega_2$-dense subsets of the reals (defined in the obvious way) in the context of $2^\omega > \omega_2$.\footnote{The principle $\BAg(2^\omega)$ is always false, by an argument as in Lemma \ref{lm:Serp}.} The statement of $\BAii$ retains the original appeal of $\BA$, with $\BA + \BAii$ entailing a notion of categoricity for $\omega_1$ and $\omega_2$-dense suborders of the reals. The consistency of this principle has not been settled yet, but Moore and Todor{\v c}evi{\'c} showed in \cite{MR3696077} that $\BA + \BAii + \textsf{MA}(\omega_2)$ is consistent modulo a combinatorial principle for $\omega_2$ denoted (**). The consistency of (**) has been open since then, though.\footnote{Despite an early optimism that (**) may be consistent from large cardinals, there has been no real progress so far (see a brief remark in Guzman and Todor{\v c}evi{\'c} \cite{MR4819970}).}

\section{Unifications} \label{sec:eval}\label{sec:uni}

In light of the different nature of logical and mathematical compactness principles, as regards their compatibility or incompatibility with non-reflecting stationary sets and with instances of $\GCH$, it is natural to look for unifying principles which would have both types of principles as their consequences. A convincing unification of compactness principles would be a good starting point for proposing compactness principles as new axioms, along the lines of Foreman \cite{MR1648052} and  Feferman et al.\ \cite{4}.\footnote{For comparison, note that the adoption of choice principles was facilitated by the existence of a uniform unification provided by $\AC$. This unification was moreover unique: many choice principles considered initially were soon proved to be equivalent ($\AC$, the Well-ordering Principle, Zorn's lemma, etc.). See Footnote \ref{ft:WO} for more comments on this point.}

We discuss such unifications from two perspectives. 

First, in Section \ref{sec:expl}, we discuss unifications from the set-theoretic perspective which looks for general reflection-type principles which imply many of the compactness principles (and forcing axioms as well) in a uniform way. We will focus on \emph{Laver generic large cardinal axioms, LgLCAs}, which generalize and extend the K{\"o}nig's \emph{Game Reflection Principle}, $\GRP^+$, introduced in \cite{MR2052885}, which is equivalent to generic supercompactness of $\omega_2$ for $\sigma$-closed forcings. They require a deeper understanding of set-theoretic concepts, and thus may not be immediately appealing to all mathematicians. For some, however, they may provide a structural and uniform explicatory reason for the naturalness of purely mathematical compactness principles, such as those we discussed in this article.\footnote{On a more philosophical note, it is a matter of subjective preferences to decide whether LgLCAs make the mathematical compactness principles which they imply more ``natural'' on account of being consequences of LgLCAs, or, rather, LgLCAs are seen as ``natural'' precisely because they have natural mathematical consequences.}

Then, in Section \ref{sec:prac}, we discuss unifications from the perspective of general mathematics, formulated in terms of concepts which do not require a deeper understanding of set theory. We will focus on two principles which are the strongest of those discussed in this article, Rado's Conjecture and Martin's Maximum, and yet incompatible with each other.

\subsection{A set-theoretic perspective}\label{sec:expl}

Generic elementary embeddings can be used to formulate various reflection-type principles which--depending on the parameter for a class of posets $\mathcal P$---have wide-ranging (but sometimes incompatible) consequences. Let us state a general form of the definition, following Fuchino and Rodrigues \cite{ReflPrinciples}:

\begin{definition}[]\label{def:gen}
Let $\mathcal P$ be a class of forcing notions and let (*) be a variable for a large cardinal property, such as supercompact, superhuge, etc. We say that $\kappa$ is \emph{Laver-generically (*) for $\mathcal P$} if for any $\lambda \ge \kappa$ and any $\P \in \mathcal P$, there is a $\Q \in \mathcal P$ such that $\P$ is regularly embeddable into $\Q$ and for any generic $V$-generic filter $H$ for $\Q$ there are $M, j \sub V[H]$ such that 
\bce[(i)]
\item $M$ is an inner model of $V[H]$,
\item $j: V \to M$ is an elementary embedding definable in $V[H]$, with critical point $\kappa$ and $j(\kappa)>\lambda$,
\item $\P, H \in M$, and
\item $M$ is closed under sequences, as prescribed by (*).\footnote{For example, if $\kappa$ generically supercompact, then we require $j"\lambda \in M$, if $\kappa$ is generically superhuge, we require $j"j(\kappa) \in M$, etc.}
\ece
We write \emph{LgLCAs} \emph{(Laver-generic Large Cardinal Axioms)} to denote statements claiming the existence of a Laver-generically large cardinal for some $\mathcal P$.
\end{definition}

As it turns out, small regular cardinals such as $\omega_2$ can be Laver-generically large for various classes of forcing notions $\mathcal P$. Let us state several compactness-type consequences of LgLCAs for $\omega_2$.

\begin{itemize}

\item K{\"o}nig's \emph{Game Reflection Principle}, $\GRP^+$, is equivalent to $\omega_2$ being Laver-generically supercompact for $\sigma$-closed forcings (see K{\"o}nig \cite[Theorem 17]{MR2052885}). By \cite[Proposition 22]{MR2052885}, $\GRP^+$ implies $\RC$. By Fuchino et al.\ \cite[Lemma 4.2]{MR4198354}, it implies $\CH$.\footnote{Which is not desirable from the perspective of unification as we discussed in Section \ref{sec:prac}.}

\item If $\omega_2$ is Laver-generically supercompact for stationary preserving forcings, then $\MM^{++}$ holds (see \cite[Theorem 5.7]{MR4240755}), and hence $2^\omega = \omega_2$.

\item A strengthening of Definition \ref{def:gen}, \emph{Super-$C^{\infty}$-LgLCAs}, was introduced by Fuchino and Usuba in \cite{MR4927442} (see also \cite{F:latest}). It is used in the definition of an ultimate generic large cardinal principle, dubbed the \emph{Laver Generic Maximum} in \cite{MR4927442}. It implies $\MM^{++}$ and many other reflection-type principles (see the list in \cite[Section 7]{MR4927442}).

\end{itemize}

From the set-theoretic perspective, the majority of compactness principles at $\omega_2 = 2^\omega$ discussed in this article can thus be viewed trough the lenses of Laver-generic largeness as a consequence of $\omega_2$ being a genuine large cardinal of the given type in a definable submodel of $V$. This provides a uniform explication for the compactness properties true at $\omega_2$, at least for principles derivable from generic largeness, and captures explicitly the fact that compactness at small cardinals is often ensured by collapsing a large cardinal (see Section \ref{sec:canonical} on standard models). However, there are also some limitations and additional considerations:

\begin{itemize}

\item There is one notable exception to the uniform derivability of compactness principles from Laver-generic large cardinals:  LgLCAs do not imply $\RC$ together with $2^\omega = \omega_2$ (essentially because $\RC$ contradicts $\MA$). This sets the theory $\RC + 2^\omega = \omega_2$ appart from not only forcing axioms, but also from generic largeness (we discuss this theory in the next section).

\item LgLCAs depend on the parameter $\mathcal P$. This makes the choice of a specific axiom LgLCA rather non-canonical, especially because the variations of $\mathcal P$ yield incompatible consequences. We saw above that $\mathcal P$ for $\sigma$-closed forcings yields $\CH$ while $\mathcal P$ for stationary preserving or proper forcings yields $2^\omega = \omega_2$. With some other classes of $\mathcal P$, the continuum can be arbitrarily large (see for instance \cite{ReflPrinciples} for more details). It is not a priori clear which $\mathcal P$ is the ``right one'' (if there is one): it is possible to assign intuitive plausibility to  LgLCAs with different $\mathcal P$'s based on their consequences, but it may defeat the purpose of having a uniform explicatory principle in the first place.

\item On the positive side, though, the non-canonicity of LgLCAs can be viewed---because of the parameter $\mathcal P$---as conceptually useful generalizations of forcing axioms which can be formulated for bigger cardinals than $\omega_2$.
\end{itemize} 

The perceived drawback of non-canonicity mentioned above corresponds to the narrow perspective in this section which focuses on unifying compactness principles. From the more general perspective, LgLCAs can be interpreted as providing a framework which solves the Continuum Hypothesis (among other things) along the lines of the set-theoretic multiverse (see for instance \cite{MR2970696}, \cite{MR2366963}, \cite{MR3728994}, \cite{MR3400617}): LgLCAs  imply in a well-defined sense that continuum is either $\omega_1$, $\omega_2$ or a weakly inaccessible cardinal (on the level of weakly Mahlo cardinals), which is a fascinating trichotomy (see Fuchino and Rodrigues \cite[Section 6]{ReflPrinciples} and Eskew \cite{MR4092254} for more details and references for generic large cardinals and their potential for becoming recognized axioms).

\subsection{A mathematical perspective}\label{sec:prac}

From the narrower perspective, the adoption of axioms formulated in terms of combinatorial concepts not specific to set theory or logic appears to be easier. For example, it is well-known that the adaption of the Axiom of Choice was accelerated by existence of combinatorial restatements such as Zorn's Lemma which avoid the set-theoretic notions of well-orders and arbitrary choice functions, and can be directly applied to mathematical structures.\footnote{\label{ft:WO} The notion of a well-ordered set (a partial ordered in which all non-empty subsets have the least element) was first considered by Cantor for the purpose of defining  infinite cardinals. Zermelo showed in 1904 that the statement that every set can be well-ordered is equivalent to the fact that there is a choice function on every set. Zorn's lemma (formulated by Zorn in 1935 \cite{MR1563165}) postulates the existence of maximal elements in  partial orders $P$ in which all chains have upper bounds---a familiar concept, not requiring deeper knowledge of set theory. See Moore \cite{Moore:AC} for more historical details regarding the adoption of $\AC$.} To take a more modern example, forcing axioms can be stated as purely combinatorial statements, without mentioning consistency of theories, transitive models of set theory and other logical concepts which  appear naturally in set-theoretic arguments. By historical analogies with $\AC$ mentioned on the previous lines, this might make them more palatable (and useful) for a general mathematical community.

A well-known axiom, formulated in combinatorial terms, which unifies almost all compactness principles for $2^\omega = \omega_2$ discussed in this article (with the exception of $\RC$), is Martin's Maximum $\MM$ in various variants (such as $\MM^{++}$): since it applies not only to proper forcings but also to stationary preserving forcings, it implies various forms of stationary reflection such as $\RP$ or $\FRP$, along with $\ISPii$ (which is already implied by $\PFA$). It is in a well-defined sense the strongest possible forcing axiom\footnote{See a survey by Viale \cite{MR4713473}, in particular Proposition 4.2 and Theorem 4.4 there.} with numerous consequences in mathematics (see Section \ref{sec:sep} for some examples).  It implies $\AD^{L(\R)}$ and is therefore appealing also from the perspective of the Axiom of Determinacy (see Maddy's section in \cite{4} for more details, and also the articles \cite{Maddy:bI, Maddy:bII}).

However,  $\MM$ stubbornly resists generalizations to larger cardinals, hence it is natural to look for alternatives:

\begin{itemize}

\item Rado's Conjecture is a natural mathematical statement which is incompatible with $\MA$. Though it is formulated as a compactness principle for a certain class of graphs (and hence its scope looks a priori rather limited), it has a surprisingly wide range of consequences. Moreover, unlike $\MM$, it is easier to generalize to higher cardinals since it is formulated only in terms of subgraphs and cardinalities.

\item The deeper reason why generalizations of $\MM$ to higher cardinals appear to be hard to find is that the structure of stationary subsets of $[\kappa]^\theta$ for an uncountable $\theta$ is much more complex, in comparison with $[\kappa]^\omega$. This lack of uniformity suggests that forcing axioms for $\omega_2$, related as they are to $[\kappa]^\omega$, may be an exception, not a rule. This makes $\MM$ an isolated principle rather than an instance of a more general structure (see also Remark \ref{rm:MM}), and it may be seen as lowering its explicatory strength.
\end{itemize}

As we mentioned in the previous Section, both $\RC$ and $\MM$ are consequences of Laver-generic large cardinals for specific classes $\mathcal P$. However, we also noted that $\GRP^+$, which implies $\RC$, also implies $\CH$, and hence the negation of the logical compactness principles at $\omega_2$ such as the tree property or the failure of the approachability. 

Hence for the purposes of this section---a discussion of unifications of mathematical and logical compactness---it is worth considering the  theory: $$T^+ :=_{\mx{df}} \ZFC + \RC + 2^{\omega} = \omega_2,$$ which is a genuine and powerful alternative to $\ZFC + \MM$ (and not a consequence of Laver-generic largeness). By results of Torres-P{\'e}rez and Wu \cite{MR3600760}, the strong form of Chang's conjecture with $\neg \CH$ implies the strong tree property at $\omega_2$, $\TP_{\omega_2}$ in our notation. Thus $T^+$ proves $\TP_{\omega_2}$, unifying some logical and mathematical principles.\footnote{However, it is known that $T^+$ does not prove $\ITP_{\omega_2}$ by Zhang \cite[Theorem 2.2]{MR4094551}. The reason is that $\RC$ is consistent just from strong compactness and thus principles related to supercompactness appear to be outside its reach.} The theory $T^+$ is also  much stronger in terms of the (lack) of indestructibility results we discussed in this article: $T^+$ is destroyed by adding just one new real and hence obtaining independence of mathematical statements from $T^+$ is harder.\footnote{\label{ft:real} This follows from \cite[Theorem 6.4]{Tdich} which shows that under $\RC$ transitive models computing correctly $\omega_2$ must contain all the reals. Note that $\MM$ is destroyed by adding a Cohen real, but it seems to be open whether \emph{any} new real destroys $\MM$.} For instance, the methods of proof in Examples 1, 2, and 3 in the previous Section \ref{sec:sep} do not apply to $T^+$. However, it does not mean that $\RC$  decides these problems: we checked by a direct argument  in Example 4 in Section \ref{sec:sep} that $T^+$ does not prove $\BA$, $\WP$, or $\SH$, and very likely (though it is open) it does not prove their negations either. This suggest that---unsurprisingly, considering the forcing content of $\MM$---$\ZFC + \MM$ does have more consequences in mathematics than $T^+$ does.

While there are (at least) two alternatives for $\omega_2$ regarding unifications, there appears to be no well-established candidate for cardinals above $\omega_2$. Still, from the two theories for $\omega_2$, $\ZFC + \MM$ and $T^+$, $T^+$ is the one which has---arguably---more potential to be generalized to higher cardinals. Nothing prevents a straightforward generalization of cardinality concepts which appear in $\RC$\footnote{\label{ft:RC3}In this setting, one can define a three-parameter version of Rado's Conjecture, $\RC(\kappa,\lambda,\mu)$, which asserts that every tree of height $\kappa^+$ which is not special and has size at most $\lambda$ has a subtree of size $<\mu$ which is not special. In this notation, $\RC$ denotes $(\forall \lambda) \RC(\omega,\lambda,\omega_2)$, and $\RC(\lambda)$ denotes $\RC(\omega,\lambda,\lambda)$ (see Theorem \ref{th:RC}). By considering different $\kappa$ and $\mu$, one obtains variations of $\RC$ with different properties. For instance, Switzer (and perhaps others) observed that $(\forall \lambda)\RC(\omega,\lambda,2^\omega)$ is consistent with $2^\omega $ larger than $\omega_2$ by adding supercompact many Cohen reals ($2^\omega$ is equal to $\kappa$, where $\kappa$ is a supercompact cardinal in the ground model). In the context of unifications, it is worth mentioning that it is apparently open whether $\RC(\omega,\lambda,\omega_2)$ and $\RC(\omega_1,\lambda,\omega_3)$ can hold simultaneously. It is possible that the simultaneous Rado Conjecture might have strictly larger consistency strength and stronger consequences than the individual principles (compare with the tree properties at $\omega_2$ and $\omega_3$ in Abraham's paper \cite{ABR:tree}).} or in Laver-generic large cardinal axioms,
as is for instance considered in Fuchino et al.\ \cite{MR4198354} in the context of infinitary logics, while it is known that there are provable restrictions for forcing axioms above $\omega_2$.\footnote{See for instance Shelah \cite{Shelah:FA} and Todor{\v c}evi{\'c} and Xiong \cite{todorčević2020inconsistentforcingaxiomomega2}. However, it is also possible that the by moving to  versions of $\RC$ for higher cardinals, some limitations along these lines will appear for this principle as well (for instance connected to the structure of $[\kappa]^\theta$ for uncountable $\theta$, as we discussed above).} 

Although $T^+$ has potential for generalization, the main research focus remains on $\omega_2$, where the most interesting applications and problems are found. Nonetheless, proving interesting statements from a generalized version of $T^+$ would strengthen its standing as a specific instance of a global pattern.

%\bibliographystyle{amsplain}
%\bibliography{../../../../mybiblio}

\providecommand{\bysame}{\leavevmode\hbox to3em{\hrulefill}\thinspace}
\providecommand{\MR}{\relax\ifhmode\unskip\space\fi MR }
% \MRhref is called by the amsart/book/proc definition of \MR.
\providecommand{\MRhref}[2]{%
  \href{http://www.ams.org/mathscinet-getitem?mr=#1}{#2}
}
\providecommand{\href}[2]{#2}

\end{document}